\begin{document}
\theoremstyle{plain}
\newtheorem*{ithm}{Theorem}
\newtheorem*{idefn}{Definition}
\newtheorem{thm}{Theorem}[section]
\newtheorem{lem}[thm]{Lemma}
\newtheorem{dlem}[thm]{Lemma/Definition}
\newtheorem{prop}[thm]{Proposition}
\newtheorem{set}[thm]{Setting}
\newtheorem{cor}[thm]{Corollary}
\newtheorem*{icor}{Corollary}
\theoremstyle{definition}
\newtheorem{assum}[thm]{Assumption}
\newtheorem{notation}[thm]{Notation}
\newtheorem{defn}[thm]{Definition}
\newtheorem{clm}[thm]{Claim}
\newtheorem{ex}[thm]{Example}
\theoremstyle{remark}
\newtheorem{rem}[thm]{Remark}
\newcommand{\unit}{\mathbb I}
\newcommand{\ali}[1]{{\mathfrak A}_{[ #1 ,\infty)}}
\newcommand{\alm}[1]{{\mathfrak A}_{(-\infty, #1 ]}}
\newcommand{\nn}[1]{\lV #1 \rV}
\newcommand{\br}{{\mathbb R}}
\newcommand{\dm}{{\rm dom}\mu}
\newcommand{\lb}{l_{\bb}(n,n_0,k_R,k_L,\lal,\bbD,\bbG,Y)}
\newcommand{\Ad}{\mathop{\mathrm{Ad}}\nolimits}
\newcommand{\Proj}{\mathop{\mathrm{Proj}}\nolimits}
\newcommand{\RRe}{\mathop{\mathrm{Re}}\nolimits}
\newcommand{\RIm}{\mathop{\mathrm{Im}}\nolimits}
\newcommand{\Wo}{\mathop{\mathrm{Wo}}\nolimits}
\newcommand{\Prim}{\mathop{\mathrm{Prim}_1}\nolimits}
\newcommand{\Primz}{\mathop{\mathrm{Prim}}\nolimits}
\newcommand{\ClassA}{\mathop{\mathrm{ClassA}}\nolimits}
\newcommand{\Class}{\mathop{\mathrm{Class}}\nolimits}
\newcommand{\diam}{\mathop{\mathrm{diam}}\nolimits}
\def\qed{{\unskip\nobreak\hfil\penalty50
\hskip2em\hbox{}\nobreak\hfil$\square$
\parfillskip=0pt \finalhyphendemerits=0\par}\medskip}
\def\proof{\trivlist \item[\hskip \labelsep{\bf Proof.\ }]}
\def\endproof{\null\hfill\qed\endtrivlist\noindent}
\def\proofof[#1]{\trivlist \item[\hskip \labelsep{\bf Proof of #1.\ }]}
\def\endproofof{\null\hfill\qed\endtrivlist\noindent}

\newcommand{\varphii}{\varphi}
\newcommand{\pgs}{\caP_{\sigma}}
\newcommand{\oo}{{\boldsymbol\varphii}}
\newcommand{\caA}{{\mathcal A}}
\newcommand{\caB}{{\mathcal B}}
\newcommand{\caC}{{\mathcal C}}
\newcommand{\caD}{{\mathcal D}}
\newcommand{\caE}{{\mathcal E}}
\newcommand{\caF}{{\mathcal F}}
\newcommand{\caG}{{\mathcal G}}
\newcommand{\caH}{{\mathcal H}}
\newcommand{\caI}{{\mathcal I}}
\newcommand{\caJ}{{\mathcal J}}
\newcommand{\caK}{{\mathcal K}}
\newcommand{\caL}{{\mathcal L}}
\newcommand{\caM}{{\mathcal M}}
\newcommand{\caN}{{\mathcal N}}
\newcommand{\caO}{{\mathcal O}}
\newcommand{\caP}{{\mathcal P}}
\newcommand{\caQ}{{\mathcal Q}}
\newcommand{\caR}{{\mathcal R}}
\newcommand{\caS}{{\mathcal S}}
\newcommand{\caT}{{\mathcal T}}
\newcommand{\caU}{{\mathcal U}}
\newcommand{\caV}{{\mathcal V}}
\newcommand{\caW}{{\mathcal W}}
\newcommand{\caX}{{\mathcal X}}
\newcommand{\caY}{{\mathcal Y}}
\newcommand{\caZ}{{\mathcal Z}}
\newcommand{\bbA}{{\mathbb A}}
\newcommand{\bbB}{{\mathbb B}}
\newcommand{\bbC}{{\mathbb C}}
\newcommand{\bbD}{{\mathbb D}}
\newcommand{\bbE}{{\mathbb E}}
\newcommand{\bbF}{{\mathbb F}}
\newcommand{\bbG}{{\mathbb G}}
\newcommand{\bbH}{{\mathbb H}}
\newcommand{\bbI}{{\mathbb I}}
\newcommand{\bbJ}{{\mathbb J}}
\newcommand{\bbK}{{\mathbb K}}
\newcommand{\bbL}{{\mathbb L}}
\newcommand{\bbM}{{\mathbb M}}
\newcommand{\bbN}{{\mathbb N}}
\newcommand{\bbO}{{\mathbb O}}
\newcommand{\bbP}{{\mathbb P}}
\newcommand{\bbQ}{{\mathbb Q}}
\newcommand{\bbR}{{\mathbb R}}
\newcommand{\bbS}{{\mathbb S}}
\newcommand{\bbT}{{\mathbb T}}
\newcommand{\bbU}{{\mathbb U}}
\newcommand{\bbV}{{\mathbb V}}
\newcommand{\bbW}{{\mathbb W}}
\newcommand{\bbX}{{\mathbb X}}
\newcommand{\bbY}{{\mathbb Y}}
\newcommand{\bbZ}{{\mathbb Z}}
\newcommand{\str}{^*}
\newcommand{\lv}{\left \vert}
\newcommand{\rv}{\right \vert}
\newcommand{\lV}{\left \Vert}
\newcommand{\rV}{\right \Vert}
\newcommand{\la}{\left \langle}
\newcommand{\ra}{\right \rangle}
\newcommand{\ltm}{\left \{}
\newcommand{\rtm}{\right \}}
\newcommand{\lcm}{\left [}
\newcommand{\rcm}{\right ]}
\newcommand{\ket}[1]{\lv #1 \ra}
\newcommand{\bra}[1]{\la #1 \rv}
\newcommand{\lmk}{\left (}
\newcommand{\rmk}{\right )}
\newcommand{\al}{{\mathcal A}}
\newcommand{\md}{M_d({\mathbb C})}
\newcommand{\ainn}{\mathop{\mathrm{AInn}}\nolimits}
\newcommand{\id}{\mathop{\mathrm{id}}\nolimits}
\newcommand{\Tr}{\mathop{\mathrm{Tr}}\nolimits}
\newcommand{\Ran}{\mathop{\mathrm{Ran}}\nolimits}
\newcommand{\Ker}{\mathop{\mathrm{Ker}}\nolimits}
\newcommand{\Aut}{\mathop{\mathrm{Aut}}\nolimits}
\newcommand{\spn}{\mathop{\mathrm{span}}\nolimits}
\newcommand{\Mat}{\mathop{\mathrm{M}}\nolimits}
\newcommand{\UT}{\mathop{\mathrm{UT}}\nolimits}
\newcommand{\DT}{\mathop{\mathrm{DT}}\nolimits}
\newcommand{\GL}{\mathop{\mathrm{GL}}\nolimits}
\newcommand{\spa}{\mathop{\mathrm{span}}\nolimits}
\newcommand{\supp}{\mathop{\mathrm{supp}}\nolimits}
\newcommand{\rank}{\mathop{\mathrm{rank}}\nolimits}
\newcommand{\idd}{\mathop{\mathrm{id}}\nolimits}
\newcommand{\ran}{\mathop{\mathrm{Ran}}\nolimits}
\newcommand{\dr}{ \mathop{\mathrm{d}_{{\mathbb R}^k}}\nolimits} 
\newcommand{\dc}{ \mathop{\mathrm{d}_{\cc}}\nolimits} \newcommand{\drr}{ \mathop{\mathrm{d}_{\rr}}\nolimits} 
\newcommand{\zin}{\mathbb{Z}}
\newcommand{\rr}{\mathbb{R}}
\newcommand{\cc}{\mathbb{C}}
\newcommand{\ww}{\mathbb{W}}
\newcommand{\nan}{\mathbb{N}}\newcommand{\bb}{\mathbb{B}}
\newcommand{\aaa}{\mathbb{A}}\newcommand{\ee}{\mathbb{E}}
\newcommand{\pp}{\mathbb{P}}
\newcommand{\wks}{\mathop{\mathrm{wk^*-}}\nolimits}
\newcommand{\mk}{{\Mat_k}}
\newcommand{\mnz}{\Mat_{n_0}}
\newcommand{\mn}{\Mat_{n}}
\newcommand{\dist}{\dc}
\newcommand{\braket}[2]{\left\langle#1,#2\right\rangle}
\newcommand{\ketbra}[2]{\left\vert #1\right \rangle \left\langle #2\right\vert}
\newcommand{\abs}[1]{\left\vert#1\right\vert}
\newtheorem{nota}{Notation}[section]
\def\qed{{\unskip\nobreak\hfil\penalty50
\hskip2em\hbox{}\nobreak\hfil$\square$
\parfillskip=0pt \finalhyphendemerits=0\par}\medskip}
\def\proof{\trivlist \item[\hskip \labelsep{\bf Proof.\ }]}
\def\endproof{\null\hfill\qed\endtrivlist\noindent}
\def\proofof[#1]{\trivlist \item[\hskip \labelsep{\bf Proof of #1.\ }]}
\def\endproofof{\null\hfill\qed\endtrivlist\noindent}
\newcommand{\ZZ}{\bbZ_2\times\bbZ_2}
\newcommand{\SSS}{\mathcal{S}}
\newcommand{\cs}{S}
\newcommand{\ct}{t}
\newcommand{\hS}{S}
\newcommand{\vv}{{\boldsymbol v}}
\newcommand{\ala}{a}
\newcommand{\bet}{b}
\newcommand{\gam}{c}
\newcommand{\alphas}{\alpha}
\newcommand{\alphai}{\alpha^{(\sigma_{1})}}
\newcommand{\alphan}{\alpha^{(\sigma_{2})}}
\newcommand{\betas}{\beta}
\newcommand{\betai}{\beta^{(\sigma_{1})}}
\newcommand{\betan}{\beta^{(\sigma_{2})}}
\newcommand{\alphass}{\alpha^{{(\sigma)}}}
\newcommand{\uu}{V}
\newcommand{\vp}{\varsigma}
\newcommand{\vpr}{R}
\newcommand{\tg}{\tau_{\Gamma}}
\newcommand{\sgg}{\Sigma_{\Gamma}^{(\sigma)}}
\newcommand{\nh}{1}
\newcommand{\rk}{2,a}
\newcommand{\nii}{1,a}
\newcommand{\nhh}{3,a}
\newcommand{\sjt}{2}
\newcommand{\sjtg}{2}
\newcommand{\bcg}{\caB(\caH_{\alpha})\otimes  C^{*}(\Sigma_{\Gamma}^{(\sigma)})}
\title{A classification of pure states on quantum spin chains satisfying the split property with on-site finite group symmetries}

\author{Yoshiko Ogata \thanks{ Graduate School of Mathematical Sciences
The University of Tokyo, Komaba, Tokyo, 153-8914, Japan
Supported in part by
the Grants-in-Aid for
Scientific Research, JSPS.}}
\maketitle

\begin{abstract}
We consider a set $SPG(\caA)$ of
pure split states on a quantum spin chain $\caA$
which are invariant under the on-site action $\tau$ of a finite group $G$
.
For each element $\omega$ in $SPG(\caA)$ we can associate a second cohomology 
 class $c_{\omega,R}$
of $G$. 
We consider a classification of $SPG(\caA)$ whose criterion is given as follows:
$\omega_{0}$ and $\omega_{1}$ in  $SPG(\caA)$
are equivalent if there are automorphisms $\Xi_{R}$, $\Xi_L$ on 
$\caA_{R}$, $\caA_{L}$ (right and left half infinite chains) preserving the symmetry $\tau$, such that
$\omega_{1}$ and $\omega_{0}\circ\lmk \Xi_{L}\otimes \Xi_{R}\rmk$
are quasi-equivalent.
It means that we can 
move $\omega_{0}$
close to $\omega_{1}$
without changing the entanglement nor breaking the symmetry.
We show that the second cohomology class $c_{\omega,R}$
is the complete invariant of this classification.

\end{abstract}
\section{Introduction}
It is well-known that the pure state space $P(\caA)$ of a quantum spin chain $\caA$
(UHF-algebra, see subsection \ref{setting})
is homogeneous under the action of the asymptotically inner automorphisms \cite{powers}, \cite{brat}, \cite{fkk}.
In fact, the homogeneity is proven for much larger class, i.e.,
for all the separable simple $C^{*}$-algebras \cite{kos}.
In this paper, we focus on the subset $SP(\caA)$ of $P(\caA)$ 
consisting of pure states satisfying the split property. (See Definition \ref{split}.)
One equivalent condition for a state $\omega\in P(\caA)$ to satisfy the split property is that
$\omega$ is quasi-equivalent to $\omega\vert_{\caA_{L}}\otimes \omega\vert_{\caA_{R}}$.
(See Remark \ref{splitrem}.)
Here, $\omega\vert_{\caA_{L}}$, $\omega\vert_{\caA_{R}}$ are restrictions of $\omega$
onto the left/right half-infinite chains. (See subsection \ref{setting}.)
A product state on $\caA=\caA_{L}\otimes \caA_{R}$ has no entanglement between 
$\caA_{L}$ and $\caA_{R}$ by definition.
In this sense, a  state with the split property has small entanglement between $\caA_{L}$
and $\caA_{R}$. Using the result  of  \cite{powers}, \cite{brat}, \cite{fkk},\cite{kos},
one can easily see that for any $\omega_{0},\omega_{1}\in SP(\caA)$,
there exist asymptotically inner automorphisms $\Xi_{L}$, $\Xi_{R}$ on $\caA_{L}$, $\caA_{R}$
such that $\omega_{1}\vert_{\caA_{L}}\sim_{q.e.}\omega_{0}\vert_{\caA_{L}}\circ \Xi_{L}$
and $\omega_{1}\vert_{\caA_{R}}\sim_{q.e.}\omega_{0}\vert_{\caA_{R}}\circ \Xi_{R}$.
(Here $\sim_{q.e.}$ means quasi-equivalence.)
From this and the split property of $\omega_{0}$, $\omega_{1}$, we see that $\omega_{1}$ and
$\omega_{0}\circ\lmk \Xi_{L}\otimes \Xi_{R}\rmk$ are quasi-equivalent.
The product of automorphisms $\Xi_{L}\otimes \Xi_{R}$ clearly does not
create/destroy any entanglement between $\caA_{L}$ and $\caA_{R}$.
Hence any $\omega_{0}\in SP(\caA)$
can get ''close to'' any $\omega_{1}\in SP(\caA)$ without changing the entanglement.
In this sense, we may regard $SP(\caA)$ to be ''homogeneous''.

What we would like to show in this paper is that the situation changes when symmetry comes into the game. This corresponds to the notion of symmetry protected topological phases in physics \cite{ogata}.
Let $SPG(\caA)$ be the set of all states in $ SP(\caA)$ which are invariant under the onsite action $\tau$
of a finite group $G$. (See Definition \ref{split}.)
We now require that  the automorphisms $\Xi_{L}$, $\Xi_{R}$ above to preserve the symmetry
i.e., $\Xi_{L}\circ \tau_{L}(g)= \tau_{L}(g)\circ \Xi_{L}$
and $\Xi_{R}\circ \tau_{R}(g)= \tau_{R}(g)\circ \Xi_{R}$ for all $g\in G$.
(See (\ref{tgg}) for the definition of $\tau_{L}$ and $\tau_{R}$.)
For any $\omega_{0},\omega_{1}\in SPG(\caA)$, can we always find such 
automorphisms giving $\omega_{1}\sim_{q.e.}\omega_{0}\circ\lmk \Xi_{L}\otimes \Xi_{R}\rmk$?
We show that the answer is no in general. 
The obstacle is given by the second cohomology class of the projective representation of $G$
associated to $\omega\in SPG(\caA)$.
We show that this second cohomology class is the complete invariant of this classification.

\subsection{Setting}\label{setting}
We consider the setting in this subsection throughout this paper.
We use the basic notation in section \ref{notasec} freely.
We start by summarizing standard setup of quantum spin chains on the infinite chain \cite{BR1,BR2}.
Throughout this paper, we fix some $2\le d\in\nan$.
We denote the algebra of $d\times d$ matrices by $\Mat_{d}$.

For each subset $\Gamma$ of $\bbZ$,
we denote the set of all finite subsets in $\Gamma$ by ${\mathfrak S}_{\Gamma}$.
We use the notation    $\Gamma_{R}=[0,\infty)\cap \bbZ$ and $\Gamma_{L}=(-\infty,-1]\cap \bbZ$.

For each $z\in\bbZ$,  let $\caA_{\{z\}}$ be an isomorphic copy of $\Mat_{d}$, and for any finite subset $\Lambda\subset\bbZ$, we set $\caA_{\Lambda} = \bigotimes_{z\in\Lambda}\caA_{\{z\}}$.
For finite $\Lambda$, the algebra $\caA_{\Lambda} $ can be regarded as the set of all bounded operators acting on
the Hilbert space $\bigotimes_{z\in\Lambda}{\bbC}^{d}$.
We use this identification freely.
If $\Lambda_1\subset\Lambda_2$, the algebra $\caA_{\Lambda_1}$ is naturally embedded in $\caA_{\Lambda_2}$ by tensoring its elements with the identity. 
For an infinite subset $\Gamma\subset \bbZ$,
$\caA_{\Gamma}$
is given as the inductive limit of the algebras $\caA_{\Lambda}$ with $\Lambda\in{\mathfrak S}_{\Gamma}$.
We call $\caA_{\Gamma}$ the quantum spin system on $\Gamma$.
In particular, we use notation
$\caA:=\caA_{\bbZ}$, $\caA_{R}:=\caA_{\Gamma_{R}}$ and
$\caA_{L}:=\caA_{\Gamma_{L}}$.
Occasionally, we call them quantum spin chain, right infinite chain, left infinite chain, respectively.
Note that each of $\caA_{\Lambda}$,  $\caA_{\Gamma}$ can be regarded naturally as a subalgebra of
$\caA$.
We also set $\caA_{\rm loc,\Gamma}=\bigcup_{\Lambda\in{\mathfrak S}_\Gamma}\caA_{\Lambda}
$, for any $\Gamma\subset \bbZ$.

We denote the standard basis of $\cc^{d}$ by $\{e_i\}_{i=1,\ldots,d}$, and 
denote the standard matrix unit of $\Mat_{d}$ by $\{E_{i,j}\mid i, j=1,\ldots,d\}$.
For each finite $\Lambda\subset \bbZ$, we denote
the tensor product $\bigotimes_{k\in\Lambda}E_{i_{k},j_{k}}$ of $E_{{i_{k},j_{k}}}$ along $k\in\Lambda$,
by $E_{I,J}^{(\Lambda)}$ with $I:=(i_{k})_{k\in\Lambda}$ and
 $J:=(j_{k})_{k\in\Lambda}$.
 We also use the notation 
\begin{align}\label{sldef}
\caS_{\Lambda}:=\left\{E_{I,J}^{(\Lambda)}\mid I,J\in \{1,\ldots, d\}^{\times \Lambda}
\right\}.
\end{align}
Furthermore, we set $e_{I}^{(\Lambda)}:=\bigotimes_{k\in\Lambda}e_{i_{k}}\in\bigotimes_{\Lambda}\bbC^{d}$
 for $I:=(i_{k})_{k\in\Lambda}$.

Throughout this paper we fix a finite group $G$ and its  unitary representation $U$ on $\bbC^{d}$
satisfying
\begin{align}\label{uffl}
U(g)\notin \bbC\unit_{{\bbC^{d}}},\quad \text{if} \quad g\neq e.
\end{align}
We denote the identity of $G$ by $e$.

Let $\Gamma\subset \bbZ$ be a non-empty subset.
For each $g\in G$, there exists a unique automorphism $\tau_{\Gamma}$ on $\caA_{\Gamma}$
such that 
\begin{align}\label{tgg}
\tau_{\Gamma}(g)\lmk a\rmk=\Ad\lmk\bigotimes_{I} U(g)\rmk\lmk a\rmk,\quad a\in\caA_{I},\quad g\in G,
\end{align}
for any finite subset $I$ of $\Gamma$.
We call the group homomorphism $\tau_{\Gamma}: G\to \Aut \caA_{\Gamma}$, 
the on-site action of $G$ 
on $\caA_{\Gamma}$ given by $U$.
In particular, when $\Gamma=\bbZ$, (resp. $\Gamma=\Gamma_{R}$, $\Gamma=\Gamma_{L}$),
we denote $\tau_{\Gamma}$ by $\tau$ (resp. $\tau_{R}$, $\tau_{L}$).
For $\Gamma\subset \bbZ$, we denote by $\caA_{\Gamma}^{G}$ the fixed point subalgebra of $\caA_{\Gamma}$
with respect to $\tau_{\Gamma}$.
For simplicity,  also use the notation $\caA_{L}^{G}:=\caA_{\Gamma_{L}}^{G}$
and $\caA_{R}^{G}:=\caA_{\Gamma_{R}}^{G}$.
\subsection{Projective representations of $G$}\label{pp}
A map $\sigma : G\times G\to \bbT$ is called a $2$-cocycle of $G$ if 
\begin{enumerate}
\item
$\sigma(g,h)\sigma(gh,k)=\sigma(h,k)\sigma(g,hk)$, for all $g,h,k\in G$,
\item 
$\sigma(g,e)=\sigma(e,g)=1$ for all $g\in G$.
\end{enumerate}
Define the product of two $2$-cocycles by their point-wise product.
The set of all $2$-cocycles of $G$ then becomes 
an abelian group.
The resulting group we denote by
$Z^{2}(G,\bbT)$.
The identity of $Z^{2}(G,\bbT)$ is given by $1_{Z^{2}(G,\bbT)}(g,h):=1$,
for $g,h\in G$.
For an arbitrary function $b: G\to \bbT$ such that $b(e)=1$,
\begin{align}\label{bound}
\sigma_{b}(g,h)=b(gh)^{-1} b(g)b(h), \quad g,h\in G
\end{align}
defines a $2$-cocycle.
The set of all 2-cocycles of this type
forms 
a normal subgroup
$B^{2}(G,\bbT)$ of $Z^{2}(G,\bbT)$.
The quotient group $H^{2}(G,\bbT):=Z^{2}(G,\bbT)/ B^{2}(G,\bbT)$ 
is called the second cohomology group of $G$.
For each $\sigma\in Z^{2}(G,\bbT)$, we denote by $[\sigma]_{H^{2}(G,\bbT)}$ the second cohomology class that $\sigma$ belongs to.

A projective unitary representation of $G$ 
is a triple $(\caH, \uu,\sigma)$ consisting of a Hilbert space $\caH$, 
a map $\uu: G \to \caU (\caH)$
 and a $2$-cocycle $\sigma$ of $G$
 such that $\uu(g)\uu(h) = \sigma(g, h)\uu(gh)$ for all $g,h\in G$. Note that we get $\uu(e)=\unit_{\caH}$ from the latter condition.
 We call $\sigma$, the $2$-cocycle of $G$ associated to $V$, and call
 $[\sigma]_{H^{2}(G,\bbT)}$ the second cohomology class of $G$
 associated to $V$. We occasionally say $(\caH,V)$ is a projective unitary representation with $2$-cocycle $\sigma$.
The character of  a finite dimensional projective unitary representation $(\caH,V,\sigma)$
is given by $\chi_{V}(g)=\Tr_{\caH}V(g)$, for $g\in G$.
 
 We say a projective unitary representation  
$(\caH, \uu,\sigma)$ of $G$ is irreducible if $\caH$ and $0$ are the only $\uu$-invariant subspaces of $\caH$.
As $G$ is a finite group, for any irreducible projective unitary  representation $(\caH, \uu,\sigma)$ of $G$, the Hilbert space
$\caH$ is finite dimensional.
Projective unitary representations $(\caH_{1}, \uu_{1},\sigma_{1})$ and
$(\caH_{2}, \uu_{2},\sigma_{2})$ are said to be unitarily equivalent if there is a unitary
$W:\caH_{1}\to\caH_{2}$ such that $W\uu_{1}(g)W^{*}= \uu_{2}(g)$, with $g\in G$.
Clearly if $(\caH_{1}, \uu_{1},\sigma_{1})$ and
$(\caH_{2}, \uu_{2},\sigma_{2})$ are unitarily equivalent, the $2$-cocycles $\sigma_{1}$ and $\sigma_{2}$ coincides.
Schur's Lemma holds: let $(\caH_{1}, \uu_{1},\sigma_{1})$ and
$(\caH_{2}, \uu_{2},\sigma_{2})$ be irreducible projective unitary representations of $G$, and
$W:\caH_{1}\to\caH_{2}$ be a linear map such that
$W \uu_{1}(g)=\uu_{2}(g) W$ for all $g\in G$.
Then either $V=0$ or 
$(\caH_{1}, \uu_{1},\sigma_{1})$ and
$(\caH_{2}, \uu_{2},\sigma_{2})$ are unitarily equivalent.
The proof is the same as that of the genuine representations (see \cite{simon} Theorem II.4.2
for example.)

For $\sigma\in Z^{2}(G,\bbT)$, we denote by $\pgs$,
the set of all unitarily equivalence classes of irreducible projective representations
with $2$-cocycle $\sigma$. 
Note that $\caP_{1_{Z^{2}(G,\bbT)}}$ is equal to $\hat G$,
the dual of $G$.

For each $\alpha\in \pgs$,
we fix a representative $(\caH_{\alphas}, \uu_{\alphas},\sigma)$.
We denote the dimension of $\caH_{\alphas}$ 
(which is finite) by $n_{\alphas}$ and
fix an orthonormal basis 
$\{\psi_k^{(\alphas)}\}_{k=1}^{n_{\alphas} }$ of $\caH_{\alphas}$.
We introduce the matrix unit $\{f_{k,j}^{(\alpha)}\mid k,j=1,\ldots, n_{\alpha}\}$
 of $\caB(\caH_{\alpha})$ given by
 \begin{align}
 f_{k,j}^{(\alpha)}\xi=\braket{\psi_j^{(\alphas)}}{\xi}\psi_k^{(\alphas)},\quad
 \xi\in \caH_{\alpha}.\quad k,j=1,\ldots, n_{\alpha}.
 \end{align}

We will use the following vector later, in section \ref{homogeneity}
\begin{align}\label{oa}
\Omega_{\alpha}:=
\frac 1{\sqrt{n_{\alpha}}}
\sum_{k=1}^{n_{\alpha}}
\psi_k^{(\alphas)}\otimes \psi_k^{(\alphas)}\in \caH_{\alphas}\otimes \caH_{\alphas}.
\end{align}
For each $\alphas\in \pgs$
and $k,j=1,\ldots, n_{\alphas}$, define a function $\lmk \uu_{\alphas}\rmk_{k,j}$
on $G$ by
\begin{align}
\lmk \uu_{\alphas}\rmk_{k,j}(g)
:=\braket{\psi_k^{(\alphas)}}{ \uu_{\alphas}(g){\psi_j^{(\alphas)}}},\quad
g\in G.
\end{align}
As in Theorem III.1.1 of \cite{simon}, from Schur's Lemma, 
we obtain the orthogonality relation:
\begin{align}\label{orthog}
\frac{1}{|G|} \sum_{g\in G}
\lmk \uu_{\alphas}\rmk_{k,j}(g)
\overline{\lmk \uu_{\betas}\rmk_{t,s}(g)}
=\frac{\delta_{\alphas,\betas}\delta_{j,s}\delta_{k,t}}{ n_{\alphas}},
\end{align}
for all $\alphas,\betas\in \pgs$
and $k,j,t,s=1,\ldots, n_{\alphas}$.
Here $|G|$ denotes the number of elements in $G$.
In particular, $\pgs$ is a finite set.
We freely identify $\alpha$ and $V_{\alpha}$.
For example, $\alpha\otimes \beta'$, $\alpha\otimes V$ should be understood as 
$V_{\alpha}\otimes V_{\beta'}$, 
$V_{\alpha}\otimes V$
for $\alpha\in\pgs$, $\beta'\in \caP_{\sigma'}$, and a projective unitary representation $V$.
We repeatedly use the following fact.
\begin{lem}\label{pd}
For any projective unitary  representation $(\caH, \uu,\sigma)$, there are
Hilbert spaces $\caK_{\alpha}$ labeled by $\alpha\in\pgs$
and a unitary $W: \caH\to \bigoplus_{\alpha\in \pgs}\caH_{\alpha}\otimes \caK_{\alpha}$
such that
\begin{align}\label{osos}
W\uu(g)W^{*}=\bigoplus_{\alpha\in \pgs}
\uu_{\alpha}(g)\otimes \unit_{\caK_{\alpha}},\quad g\in G.
\end{align}
Furthermore, the commutant $\uu(G)':=\{ X\in \caB(\caH)\mid [X, \uu(g)]=0\}$
of $\uu(G)$ is of the form
\begin{align}\label{com}
\uu(G)'=W^{*}\lmk
\bigoplus_{\alpha\in \pgs}
\unit_{\caH_{\alpha}}\otimes \caB\lmk {\caK_{\alpha}}\rmk\rmk W
\end{align}
\end{lem}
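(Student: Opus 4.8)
The plan is to route everything through the finite-dimensional $C^{*}$-algebra generated by the range of $\uu$. First I would set $\caM:=\spn_{\bbC}\{\uu(g)\mid g\in G\}\subseteq\caB(\caH)$ and check that it is a $C^{*}$-subalgebra containing $\unit_{\caH}$: it contains $\unit_{\caH}=\uu(e)$; it is closed under multiplication because $\uu(g)\uu(h)=\sigma(g,h)\uu(gh)$; and it is closed under adjoints because $\uu(g)\uu(g^{-1})=\sigma(g,g^{-1})\unit_{\caH}$ together with unitarity of $\uu(g)$ gives $\uu(g)^{*}=\uu(g)^{-1}=\overline{\sigma(g,g^{-1})}\,\uu(g^{-1})\in\caM$. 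Being the linear span of finitely many operators, $\caM$ is finite-dimensional, hence norm closed, hence a finite-dimensional $C^{*}$-algebra, in fact a von Neumann algebra acting on $\caH$. I would also record at once that $\uu(G)'=\caM'$, since an operator commutes with every $\uu(g)$ if and only if it commutes with every element of their linear span.

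Next I would invoke the structure theory of finite-dimensional $C^{*}$-algebras and their representations on Hilbert space: there are a finite index set $\caF$, mutually orthogonal minimal central projections $\{z_{\beta}\}_{\beta\in\caF}$ of $\caM$ with $\sum_{\beta}z_{\beta}=\unit_{\caH}$ and $\caM z_{\beta}\cong\caB(\tilde\caH_{\beta})$ for finite-dimensional Hilbert spaces $\tilde\caH_{\beta}$, Hilbert spaces $\caK_{\beta}$ (a priori of arbitrary dimension, possibly zero), and a unitary $W_{0}:\caH\to\bigoplus_{\beta\in\caF}\tilde\caH_{\beta}\otimes\caK_{\beta}$ with $W_{0}\caM W_{0}^{*}=\bigoplus_{\beta}\caB(\tilde\caH_{\beta})\otimes\unit_{\caK_{\beta}}$; computing commutants block by block then gives $W_{0}\caM'W_{0}^{*}=\bigoplus_{\beta}\unit_{\tilde\caH_{\beta}}\otimes\caB(\caK_{\beta})$. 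Writing $W_{0}\uu(g)W_{0}^{*}=\bigoplus_{\beta}\tilde\uu_{\beta}(g)\otimes\unit_{\caK_{\beta}}$, the relations $\uu(g)\uu(h)=\sigma(g,h)\uu(gh)$ pass to each block, so each $(\tilde\caH_{\beta},\tilde\uu_{\beta},\sigma)$ is a projective unitary representation with the same $2$-cocycle $\sigma$; it is irreducible because $\{\tilde\uu_{\beta}(g)\}$ spans the full matrix algebra $\caB(\tilde\caH_{\beta})$ (this is exactly the $\beta$-th block of $\caM$). Hence there exist a unique $\alpha(\beta)\in\pgs$ and a unitary $Y_{\beta}:\tilde\caH_{\beta}\to\caH_{\alpha(\beta)}$ with $Y_{\beta}\tilde\uu_{\beta}(g)Y_{\beta}^{*}=\uu_{\alpha(\beta)}(g)$ for all $g$.

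It remains to see that $\beta\mapsto\alpha(\beta)$ is injective and then to reindex. Any irreducible projective unitary representation with cocycle $\sigma$ gives an irreducible $*$-representation of $\caM$ (send $\uu(g)$ to the representing unitary and extend linearly; this is well defined since the defining relations of $\caM$ are precisely $\uu(g)\uu(h)=\sigma(g,h)\uu(gh)$), and an irreducible $*$-representation of a finite-dimensional $C^{*}$-algebra annihilates all but one of the $z_{\gamma}$; thus $\alpha(\beta)=\alpha(\beta')$ forces $\beta$ and $\beta'$ to correspond to the same block, that is $\beta=\beta'$. Now for $\alpha\in\pgs$ set $\caK_{\alpha}:=\caK_{\beta}$ if $\alpha=\alpha(\beta)$ for the (necessarily unique) $\beta\in\caF$ with that property, and $\caK_{\alpha}:=0$ otherwise; conjugating the $\beta$-th summand of $W_{0}$ by $Y_{\beta}\otimes\unit_{\caK_{\beta}}$ yields a unitary $W:\caH\to\bigoplus_{\alpha\in\pgs}\caH_{\alpha}\otimes\caK_{\alpha}$ with $W\uu(g)W^{*}=\bigoplus_{\alpha\in\pgs}\uu_{\alpha}(g)\otimes\unit_{\caK_{\alpha}}$, which is (\ref{osos}); and (\ref{com}) then follows from $\uu(G)'=\caM'$ together with the block form of $W_{0}\caM'W_{0}^{*}$ recorded above.

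I do not expect a genuine obstacle here. The statement is the familiar isotypic decomposition of a $\sigma$-representation, and once it is phrased through the finite-dimensional $C^{*}$-algebra $\caM$ (equivalently, through $*$-representations of the twisted group algebra $\bbC_{\sigma}[G]$) it reduces entirely to the Wedderburn structure theorem for finite-dimensional $C^{*}$-algebras, Schur's Lemma, and the orthogonality relation (\ref{orthog}), all of which are already in hand. If anything, the points needing a little care are the $*$-closedness of $\caM$; the fact that $\caH$ is not assumed finite-dimensional (harmless, since $\caM$ is finite-dimensional regardless and the multiplicity spaces $\caK_{\alpha}$ are simply permitted to be infinite-dimensional or $0$); and the clean matching of the blocks of $\caM$ with a subset of $\pgs$. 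These are bookkeeping rather than a real difficulty.
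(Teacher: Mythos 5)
Your proof is correct but takes a genuinely different route from the paper's. The paper applies Zorn's lemma to decompose $(\caH,\uu)$ directly into an orthogonal sum of irreducible projective subrepresentations (using that the orthogonal complement of a $\uu$-invariant subspace is $\uu$-invariant), and then derives the commutant description (\ref{com}) from the orthogonality relation (\ref{orthog}). You instead route everything through the finite-dimensional $C^*$-algebra $\caM$ generated by $\uu(G)$ and invoke the Wedderburn structure theory for finite-dimensional $C^*$-algebras and their commutants on a Hilbert space. Your route has the advantage that (\ref{com}) falls out automatically from the block decomposition with no separate appeal to orthogonality; the paper's is closer to the bare-hands decomposition and defers the commutant to the Schur orthogonality already recorded.

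One intermediate claim in your injectivity step is not quite right: you assert that the defining relations of $\caM$ are precisely $\uu(g)\uu(h)=\sigma(g,h)\uu(gh)$, so that any $\gamma\in\pgs$ gives a $*$-representation of $\caM$ via $\uu(g)\mapsto\uu_{\gamma}(g)$. That fails whenever the operators $\uu(g)$ are linearly dependent — equivalently, whenever $\uu$ does not contain every element of $\pgs$ — for then $\caM$ is a proper quotient of the twisted group algebra $\bbC_{\sigma}[G]$ and the proposed map need not be well defined. Your conclusion that $\beta\mapsto\alpha(\beta)$ is injective is nevertheless correct, and the fix is cheap: you only need to compare blocks of $\caM$ with each other, not with arbitrary $\gamma\in\pgs$. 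If $Y\tilde\uu_{\beta}(g)Y^{*}=\tilde\uu_{\beta'}(g)$ for all $g$, then by linearity $\Ad(Y)$ intertwines the block representations $\pi_{\beta}$ and $\pi_{\beta'}$ of $\caM$ itself, and distinct blocks of a finite-dimensional $C^*$-algebra yield inequivalent irreducible representations, so $\beta=\beta'$. Alternatively, carry out the entire argument with the abstract algebra $\bbC_{\sigma}[G]$ in place of $\caM$ and regard $\uu$ as a possibly non-faithful $*$-representation of it; then the claim about defining relations becomes true and your argument goes through verbatim.
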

\begin{proof}
For any $\uu$-invariant subspace of $\caH$, its orthogonal complement is $\uu$-invariant as well.
Therefore, from
Zorn's Lemma, we may decompose $(\caH, \uu,\sigma)$
as an orthogonal sum of irreducible projective unitary representations with $2$-cocycle $\sigma$.
This proves (\ref{osos}).
The second statement (\ref{com}) follows from the orthogonality relation (\ref{orthog}).
\end{proof}

\begin{notation}\label{ddcom}
When (\ref{osos}) holds, we say that 
$V$ (or $(\caH, \uu,\sigma)$) has an irreducible decomposition given by
Hilbert spaces $\{\caK_{\gamma}\mid \gamma \in  \pgs\}$.
We say $V$ (or $(\caH, \uu,\sigma)$) contains all elements of $\pgs$ if $\caK_{\alpha}\neq \{0\}$ for 
all $\alpha\in \pgs$.
We say $V$ (or $(\caH, \uu,\sigma)$) contains all elements of $\pgs$ with infinite multiplicity
if $\dim \caK_{\alpha}=\infty$ for 
all $\alpha\in \pgs$.
We hence force  omit $W$ in  (\ref{osos}) and identify $\caH$ and $\bigoplus_{\alpha\in \pgs}\caH_{\alpha}\otimes \caK_{\alpha}$ freely.
The Hilbert space $\caH_{\alpha}\otimes \caK_{\alpha}$ can be naturally regarded as a closed subspace of $\caH$. We use this identification freely and call $\caH_{\alpha}\otimes \caK_{\alpha}$ the $\alpha$-component of $V$ (or $(\caH, \uu,\sigma)$).
\end{notation}

\begin{notation}\label{bfvee}
Let $(\caH, \uu,\sigma)$ be
a projective unitary representation. Let $b:G\to \bbT$ be a map such that $b(e)=1$.
Setting $\sigma_{b}$ as in (\ref{bound}),
we obtain $\sigma\sigma_{b}\in  Z^{2}(G,\bbT)$.
We also set $\lmk b\cdot \uu\rmk (g):=b(g)\uu(g)$, for $g\in G$. 
Then
$(\caH, b\cdot \uu,\sigma\sigma_b)$ is a projective representation. 
\end{notation}

\subsection{The split property and projective representations}
Next let us introduce the split property.
\begin{defn}\label{split}
Let $\omega$ be a pure state on $\caA$. Let $\omega_R$ be the restriction of
$\omega$ to $\caA_R$, and $(\caH_{\omega_R},\pi_{\omega_R},\Omega_{\omega_R})$ be the GNS triple of $\omega_R$.
We say $\omega$ satisfies the split property with respect to $\caA_L$ and $\caA_R$,
if the von Neumann algebra $\pi_{\omega_R}(\caA_{R})''$ is a type I factor.
We denote by $SP(\caA)$ the set of all pure states on $\caA$
which satisfy the split property with respect to $\caA_L$ and $\caA_R$.
We also denote by $SPG(\caA)$, the set of all states $\omega$ in $SP(\caA)$, which are 
$\tau$-invariant.
\end{defn}
Recall that a type I factor is $*$-isomorphic to $B(\caK)$, the set of all bounded operators 
on a  Hilbert space $\caK$. 
See \cite{takesaki}.
\begin{rem}\label{splitrem}
Let $\omega$ be a pure state on $\caA$. 
Let  $\omega_L$ be
the restriction of $\omega$ to $\caA_L$.
Then $\omega$ satisfies the split property if and only if 
$\omega_{L}\otimes \omega_{R}$ is quasi-equivalent to $\omega$. ( See \cite{Matsui2}. In Proposition 2.2 of \cite{Matsui2}, it is assumed that the state is translationally invariant because of the first equivalent condition (i). However, the proof for the equivalence between
(ii) and (iii) does not require translation invariance.)
Therefore, by the symmetric argument, if $(\caH_{\omega_L},\pi_{\omega_L},\Omega_{\omega_L})$ is the GNS triple of $\omega_L$,
the the split property of $\omega$ implies that $\pi_{\omega_L}(\caA_{L})''$ is also a type I factor.
\end{rem}
For each $\omega\in SPG(\caA)$, we may associate a second cohomology class of $G$.
\begin{prop}\label{unieq}
Let $\omega\in SPG(\caA)$ and ${\vp}=L,R$. 
Then
there exists an irreducible $*$-representation  $\rho_{\omega,{\vp}}$ 
of $\caA_{{\vp}}$ on a Hilbert space
 $\caL_{\omega,{\vp}}$
that is quasi-equivalent to the GNS representation 
of $\omega\vert_{\caA_{{\vp}}}$.
For each of such irreducible $*$-representation $(\caL_{\omega,{\vp}}, \rho_{\omega,{\vp}})$,
there is a projective unitary representation $u_{\omega,{\vp}}$ of $G$ 
on $\caL_{\omega,{\vp}}$ such that
\begin{align}\label{uintro}
\rho_{\omega,{\vp}}\circ \tau_{\vp}(g)
=\Ad\lmk u_{\omega,{\vp}}(g)\rmk\circ \rho_{\omega,{\vp}},
\end{align}
for all  $g\in G$.
Furthermore,
if  another triple $(\tilde \caL_{\omega,{\vp}}, \tilde\rho_{\omega,{\vp}}, \tilde u_{\omega,{\vp}})$ satisfies the same conditions as $(\caL_{\omega,{\vp}}, 
\rho_{\omega,{\vp}}, u_{\omega,{\vp}})$ above, then
there is a unitary $W:\caL_{\omega,\vp}\to \tilde \caL_{\omega,\vp}$ and $c: G\to  \bbT$
such that
\begin{align}\label{Wp}
\lmk \Ad W\rmk\circ  \rho_{\omega,\vp} =
\tilde \rho_{\omega,\vp},
  \end{align}
 \begin{align}\label{cg}
 c(g) \cdot \lmk \Ad W\rmk\lmk u_{\omega,\vp}(g)\rmk=\tilde u_{\omega,\vp}(g),\quad g\in G.
 \end{align}
In particular, 
for $2$-cocycle $\sigma_{\omega,\vp}$, $\tilde \sigma_{\omega,\vp}$
associated to $u_{\omega,\vp}$, $ \tilde u_{\omega,\vp}$ respectively,
we have
$[\sigma_{\omega,\vp}]_{H^{2}(G,\bbT)}=[\tilde \sigma_{\omega,\vp}]_{H^{2}(G,\bbT)}$.
\end{prop}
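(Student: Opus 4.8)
The plan is to treat the three assertions in turn: existence of an irreducible representation quasi-equivalent to the GNS representation of $\omega\vert_{\caA_\vp}$; existence of the implementing projective representation $u_{\omega,\vp}$; and the uniqueness statement with the cohomology-class consequence. For the first assertion, I would use the split property together with Remark \ref{splitrem}. By Definition \ref{split} (and its symmetric counterpart noted in Remark \ref{splitrem}), $\pi_{\omega_\vp}(\caA_\vp)''$ is a type I factor, hence $*$-isomorphic to $\caB(\caK)$ for some Hilbert space $\caK$. A type I factor representation is quasi-equivalent to an amplification of an irreducible representation; concretely, picking a minimal projection in $\pi_{\omega_\vp}(\caA_\vp)''$ and compressing gives an irreducible $*$-representation $(\caL_{\omega,\vp},\rho_{\omega,\vp})$ of $\caA_\vp$ which is quasi-equivalent to $\pi_{\omega_\vp}$. (Since $\caA_\vp$ is separable and simple, irreducible representations are automatically faithful, though we do not need that here.)

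For the second assertion, fix such a $\rho_{\omega,\vp}$ on $\caL_{\omega,\vp}$. Since $\omega$ is $\tau$-invariant, $\omega\vert_{\caA_\vp}$ is $\tau_\vp$-invariant, so $\omega\vert_{\caA_\vp}\circ\tau_\vp(g)=\omega\vert_{\caA_\vp}$, and therefore $\rho_{\omega,\vp}\circ\tau_\vp(g)$ is quasi-equivalent to $\rho_{\omega,\vp}$. But $\rho_{\omega,\vp}$ is irreducible, hence a factor representation, and two irreducible representations that are quasi-equivalent are unitarily equivalent; so for each $g\in G$ there is a unitary $u_{\omega,\vp}(g)$ on $\caL_{\omega,\vp}$ with $\rho_{\omega,\vp}\circ\tau_\vp(g)=\Ad(u_{\omega,\vp}(g))\circ\rho_{\omega,\vp}$, which is (\ref{uintro}). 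Uniqueness of this unitary up to a phase follows from irreducibility of $\rho_{\omega,\vp}$ and Schur's lemma applied to the algebra $\rho_{\omega,\vp}(\caA_\vp)$. Comparing $\rho_{\omega,\vp}\circ\tau_\vp(gh)$ computed two ways, and using that $\tau_\vp$ is a genuine group homomorphism (so $\tau_\vp(g)\tau_\vp(h)=\tau_\vp(gh)$), one gets $u_{\omega,\vp}(g)u_{\omega,\vp}(h)u_{\omega,\vp}(gh)^{*}\in\rho_{\omega,\vp}(\caA_\vp)'=\bbC\unit$, i.e. $u_{\omega,\vp}(g)u_{\omega,\vp}(h)=\sigma_{\omega,\vp}(g,h)u_{\omega,\vp}(gh)$ for some $\sigma_{\omega,\vp}(g,h)\in\bbT$. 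One checks the $2$-cocycle identity (the pentagon) and, after normalizing $u_{\omega,\vp}(e)=\unit$, the condition $\sigma_{\omega,\vp}(g,e)=\sigma_{\omega,\vp}(e,g)=1$; this makes $(\caL_{\omega,\vp},u_{\omega,\vp},\sigma_{\omega,\vp})$ a projective unitary representation in the sense of subsection \ref{pp}.

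For the uniqueness assertion, suppose $(\tilde\caL_{\omega,\vp},\tilde\rho_{\omega,\vp},\tilde u_{\omega,\vp})$ also satisfies the conditions. Both $\rho_{\omega,\vp}$ and $\tilde\rho_{\omega,\vp}$ are irreducible and quasi-equivalent to the GNS representation of $\omega\vert_{\caA_\vp}$, hence quasi-equivalent to each other, hence (being irreducible) unitarily equivalent: there is a unitary $W:\caL_{\omega,\vp}\to\tilde\caL_{\omega,\vp}$ with $(\Ad W)\circ\rho_{\omega,\vp}=\tilde\rho_{\omega,\vp}$, which is (\ref{Wp}). Then for each $g$, both $W u_{\omega,\vp}(g)W^{*}$ and $\tilde u_{\omega,\vp}(g)$ implement $\tilde\rho_{\omega,\vp}\circ\tau_\vp(g)$ relative to $\tilde\rho_{\omega,\vp}$; by Schur's lemma (irreducibility of $\tilde\rho_{\omega,\vp}$) they differ by a phase $c(g)\in\bbT$, giving (\ref{cg}). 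Finally, substituting (\ref{cg}) into the defining relations $u_{\omega,\vp}(g)u_{\omega,\vp}(h)=\sigma_{\omega,\vp}(g,h)u_{\omega,\vp}(gh)$ and $\tilde u_{\omega,\vp}(g)\tilde u_{\omega,\vp}(h)=\tilde\sigma_{\omega,\vp}(g,h)\tilde u_{\omega,\vp}(gh)$ yields $\tilde\sigma_{\omega,\vp}(g,h)=\sigma_{\omega,\vp}(g,h)\,c(gh)^{-1}c(g)c(h)=\sigma_{\omega,\vp}(g,h)\sigma_c(g,h)$ with $\sigma_c$ as in (\ref{bound}); since $c(e)=1$ (forced by $u(e)=\tilde u(e)=\unit$), $\sigma_c\in B^{2}(G,\bbT)$, so $[\sigma_{\omega,\vp}]_{H^{2}(G,\bbT)}=[\tilde\sigma_{\omega,\vp}]_{H^{2}(G,\bbT)}$.

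The only genuinely delicate point is the first step: extracting an \emph{irreducible} representation quasi-equivalent to $\pi_{\omega_\vp}$ from the type I factor property. Everything afterward is a standard Schur's-lemma bookkeeping argument. The type I structure is exactly what is needed — a minimal projection in $\pi_{\omega_\vp}(\caA_\vp)''$ exists precisely because the factor is type I — so the hard part is really just invoking the split property correctly (via Remark \ref{splitrem}) to know that $\pi_{\omega_\vp}(\caA_\vp)''$ is type I for $\vp=L$ as well as $\vp=R$, and then translating "factor representation with type I commutant/bicommutant" into "quasi-equivalent to a multiple of an irreducible representation."
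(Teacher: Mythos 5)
Your proposal is correct and follows essentially the same strategy as the paper's proof. The one organizational difference is in producing the implementing unitaries $u_{\omega,\vp}(g)$: the paper extends $\tau_\vp$ to an automorphism $\hat\tau_\vp$ of $\pi_{\omega_\vp}(\caA_\vp)''$, transports it via the $*$-isomorphism $\iota_{\omega,\vp}$ to an automorphism of $\caB(\caL_{\omega,\vp})$, and invokes Wigner's theorem, whereas you observe that $\rho_{\omega,\vp}\circ\tau_\vp(g)$ is quasi-equivalent to $\rho_{\omega,\vp}$ (by $\tau_\vp$-invariance of $\omega\vert_{\caA_\vp}$) and use that quasi-equivalent irreducible representations are unitarily equivalent; both routes rest on the same underlying fact that $*$-isomorphisms of $\caB(\caH)$ are spatial. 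The existence, the Schur's-lemma bookkeeping for the phases $c(g)$, and the conclusion $\tilde\sigma_{\omega,\vp}=\sigma_{\omega,\vp}\sigma_c$ all match the paper.
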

\begin{proof}
Let $(\caH_{\omega_{\vp}},\pi_{\omega_{\vp}},\Omega_{\omega_{\vp}})$ be the GNS triple of $\omega\vert_{\caA_{{\vp}}}$.
The existence of irreducible $*$-representation $(\caL_{\omega,{\vp}}, \rho_{\omega,{\vp}})$
quasi-equivalent to $\pi_{\omega_{\vp}}$ 
follows from the definition of the split property. 

To see the existence of $ u_{\omega,\vp} $ satisfying (\ref{uintro})
for such $(\caL_{\omega,{\vp}}, \rho_{\omega,{\vp}})$,
let $\iota_{\omega,{\vp}}: \pi_{\omega_{\vp}}\lmk \caA_{\vp}\rmk''
\to \caB(\caL_{\omega,{\vp}})$ be the $*$-isomorphism
such that
$\rho_{\omega,{\vp}}=\iota_{\omega,{\vp}}\circ  \pi_{\omega_{\vp}}$. 
By the $\tau_{\vp}$-invariance of $\omega\vert_{\caA_{{\vp}}}$,
the action $\tau_{\vp}$ of $G$ can be extended to an action
$\hat\tau_{\vp}$ on $\pi_{\omega_{\vp}}\lmk \caA_{\vp}\rmk''$,
so that $\hat\tau_{\vp}(g)\circ\pi_{\omega_{\vp}}=\pi_{\omega_{\vp}}\circ \tau_{\vp}(g)$,
for $g\in G$.
By the Wigner Theorem, the $*$-automorphism $\iota_{\omega,{\vp}}\circ \hat\tau_{\vp}(g)\circ
\iota_{\omega,{\vp}}^{-1}$ on $\caB(\caL_{\omega,{\vp}})$
is given by a unitary $ u_{\omega,\vp} (g)$
so that
\begin{align}
\iota_{\omega,{\vp}}\circ \hat\tau_{\vp}(g)\circ
\iota_{\omega,{\vp}}^{-1}
=\Ad\lmk u_{\omega,\vp} (g)
\rmk,\quad g\in G.
\end{align}
As $\hat \tau_{\vp}$ is an action of $G$, $u_{\omega,\vp}$ is a projective unitary representation.
We obtain (\ref{uintro}) by
\begin{align}
\rho_{\omega,{\vp}}\circ \tau_{\vp}(g)
=\iota_{\omega,{\vp}}\circ  \pi_{\omega_{\vp}}\circ \tau_{\vp}(g)
=\iota_{\omega,{\vp}}\circ  \hat\tau_{\vp}(g)\circ\pi_{\omega_{\vp}}
=\iota_{\omega,{\vp}}\circ  \hat\tau_{\vp}(g)\circ\iota_{\omega,{\vp}}^{-1}\circ
\iota_{\omega,{\vp}}\circ\pi_{\omega_{\vp}}
=\Ad\lmk u_{\omega,{\vp}}(g)\rmk\circ \rho_{\omega,{\vp}}.
\end{align}

Suppose that $(\tilde \caL_{\omega,{\vp}}, \tilde\rho_{\omega,{\vp}}, \tilde u_{\omega,{\vp}})$ satisfies the same conditions as $(\caL_{\omega,{\vp}}, 
\rho_{\omega,{\vp}}, u_{\omega,{\vp}})$.
Then by the Wigner Theorem, there exists a unitary $W:\caL_{\omega,\vp}\to \tilde \caL_{\omega,\vp}$
satisfying (\ref{Wp}).
Note that
\begin{align}
\Ad\lmk \tilde u_{\omega,{\vp}}(g)\rmk\circ \tilde \rho_{\omega,{\vp}}
=
\tilde \rho_{\omega,{\vp}}\circ \tau_{\vp}(g)
=\Ad W\circ  \rho_{\omega,\vp} 
\circ \tau_{\vp}(g)
=\Ad W\circ 
\Ad\lmk u_{\omega,{\vp}}(g)\rmk
\circ \Ad W^*\circ \tilde \rho_{\omega,{\vp}}.
\end{align}
This implies that $\tilde u_{\omega,{\vp}}(g)^*\Ad W\lmk u_{\omega,{\vp}}(g) \rmk$
belongs to $\bbT\unit_{\tilde \caL_{\omega,\vp}}$ proving (\ref{cg}).
\end{proof}
\begin{defn}\label{pao}Let $\omega\in SPG(\caA)$ and
$(\caL_{\omega,{\vp}}, \rho_{\omega,{\vp}}, u_{\omega,{\vp}})$ 
be a triple satisfying the conditions in Proposition \ref{unieq}.
Let $\sigma_{\omega,{\vp}}$ be the $2$-cocycle associated to
$u_{\omega,{\vp}}$.
We call 
$(\caL_{\omega,{\vp}}, \rho_{\omega,{\vp}}, u_{\omega,{\vp}},\sigma_{\omega,{\vp}})$  
a quadruple associated to $(\omega\vert_{\caA_{{\vp}}},\tau_{\vp})$.
Furthermore, we denote the second cohomology class $[\sigma_{\omega,{\vp}}]_{H^{2}(G,\bbT)}$
by $c_{\omega,\zeta}$, and call it the second cohomology class of $G$ associated to
$(\omega\vert_{\caA_{{\vp}}},\tau_{\vp})$.
\end{defn}
\begin{rem}\label{rem17}
For a quadruple $(\caL_{\omega,{\vp}}, \rho_{\omega,{\vp}}, u_{\omega,{\vp}},\sigma_{\omega,{\vp}})$
 associated to $(\omega\vert_{\caA_{{\vp}}},\tau_{\vp})$
 and any map $b:G\to\bbT$,
 $(\caL_{\omega,{\vp}}, \rho_{\omega,{\vp}}, b\cdot u_{\omega,{\vp}},\sigma_{b}\sigma_{\omega,{\vp}})$
 is also a quadruple associated to  
 $(\omega\vert_{\caA_{{\vp}}},\tau_{\vp})$.
 See (\ref{bound}) and Notation \ref{bfvee}.
\end{rem}

\subsection{Main Theorem}
Let us introduce $\ainn^{G} (A_{\vp})$.
\begin{defn}
Let $\vp=L,R$.
An automorphism $\Xi_{\vp}$ of $\caA_{\vp}$ is asymptotically inner in $\caA_{\vp}^{G}$
if there is a norm continuous path $w_{\vp}:[0,\infty)\to \caU\lmk\caA_{\vp}^{G}\rmk$
with $w_{\vp}(0)=\unit_{\caA_{\vp}}$
 that 
\begin{align}
\Xi_{\vp}(a)=\lim_{t\to\infty}\Ad\lmk w_{\vp}(t)\rmk(a),\quad
a\in \caA_{\vp}.
\end{align}
We denote by $\ainn^{G} (A_{\vp})$ the set of all automorphisms which are
 asymptotically inner in $\caA_{\vp}^{G}$.
\end{defn}
In this paper, we consider the classification problem of $SPG(\caA)$
with respect to the following equivalence relation.
\begin{defn}
For $\omega_{0}$, $\omega_{1}\in SPG(\caA)$, we
write $\omega_{0}\sim_{{\rm split},\tau}\omega_{1}$
if there exist automorphisms $\Xi_{L}\in \ainn^{G}(\caA_{L})$ 
and $\Xi_{R}\in \ainn^{G}(\caA_{R})$ such that
$\omega_{1}$ and $\omega_{0}\circ\lmk\Xi_{L}\otimes \Xi_{R}\rmk$
are quasi-equivalent.
\end{defn}

Now we are ready to state our main theorem.
\begin{thm}\label{main}
For  $\omega_{0}$, $\omega_{1}\in SPG(\caA)$,
$\omega_{0}\sim_{{\rm split},\tau}\omega_{1}$
if and only if
$c_{\omega_{1},R}=c_{\omega_{0},R}$.

\end{thm}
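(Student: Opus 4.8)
The plan is to prove the two implications separately. The ``only if'' direction is a short deduction from Proposition~\ref{unieq}, using that the class $c_{\cdot,R}$ is stable under quasi-equivalence and under composition with a symmetry-preserving automorphism. The ``if'' direction reduces, via the split property, to a homogeneity statement for $\tau_{\vp}$-invariant split states on each half-infinite chain $\caA_{\vp}$ with a fixed second cohomology class.

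For ``only if'', assume $\omega_{1}\sim_{q.e.}\omega_{0}\circ(\Xi_{L}\otimes\Xi_{R})$ with $\Xi_{\vp}\in\ainn^{G}(\caA_{\vp})$. Writing $\Xi_{R}=\lim_{t}\Ad(w_{R}(t))$ with $w_{R}(t)\in\caU(\caA_{R}^{G})$, each $w_{R}(t)$ is $\tau_{R}$-fixed, hence $\Xi_{R}\circ\tau_{R}(g)=\tau_{R}(g)\circ\Xi_{R}$, and likewise for $\Xi_{L}$; in particular $\omega_{0}\circ(\Xi_{L}\otimes\Xi_{R})$ is again in $SPG(\caA)$ (pure, $\tau$-invariant, and split, being quasi-equivalent to $\omega_{1}$). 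If $(\caL_{\omega_{0},R},\rho_{\omega_{0},R},u_{\omega_{0},R},\sigma_{\omega_{0},R})$ is a quadruple associated to $(\omega_{0}\vert_{\caA_{R}},\tau_{R})$, then $(\caL_{\omega_{0},R},\rho_{\omega_{0},R}\circ\Xi_{R},u_{\omega_{0},R},\sigma_{\omega_{0},R})$ is one associated to $(\omega_{0}\vert_{\caA_{R}}\circ\Xi_{R},\tau_{R})$: irreducibility is clear, the intertwining relation (\ref{uintro}) follows from $\tau_{R}$-covariance of $\Xi_{R}$, and $\rho_{\omega_{0},R}\circ\Xi_{R}$ is quasi-equivalent to the GNS representation of $\omega_{0}\vert_{\caA_{R}}\circ\Xi_{R}$ since composing quasi-equivalent representations with a fixed automorphism preserves quasi-equivalence. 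Hence $c_{\omega_{0}\circ(\Xi_{L}\otimes\Xi_{R}),R}=c_{\omega_{0},R}$. Moreover $c_{\cdot,R}$ depends only on the quasi-equivalence class of the restriction to $\caA_{R}$: an irreducible representation quasi-equivalent to one restriction's GNS representation is quasi-equivalent to the other's, and a $u$ realizing (\ref{uintro}) for it serves as a quadruple for both states; combined with $\omega_{1}\sim_{q.e.}\omega_{0}\circ(\Xi_{L}\otimes\Xi_{R})$ and the split property (whence $\pi_{\omega_{1}}\vert_{\caA_{R}}\sim_{q.e.}\pi_{\omega_{1}\vert_{\caA_{R}}}$, and similarly for $\omega_{0}\circ(\Xi_{L}\otimes\Xi_{R})$, by Remark~\ref{splitrem}), this gives $c_{\omega_{1},R}=c_{\omega_{0}\circ(\Xi_{L}\otimes\Xi_{R}),R}=c_{\omega_{0},R}$.

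For ``if'', I would first upgrade $c_{\omega_{0},R}=c_{\omega_{1},R}$ to equality of the left invariants as well. For any $\omega\in SPG(\caA)$ the unitaries $U_{\omega}(g)$ on $\caH_{\omega}$ implementing $\tau(g)$ and normalized by $U_{\omega}(g)\Omega_{\omega}=\Omega_{\omega}$ are unique (by irreducibility of $\pi_{\omega}$), hence form a genuine representation, with trivial cocycle; decomposing $\pi_{\omega}$ as the irreducible, hence unitarily equivalent, representation $\rho_{\omega,L}\otimes\rho_{\omega,R}$ via the split property and comparing implementing unitaries shows $c_{\omega,L}\cdot c_{\omega,R}$ is the trivial class, so $c_{\omega_{0},L}=c_{\omega_{1},L}$. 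Next, by the split property $\omega_{i}\sim_{q.e.}\omega_{i}\vert_{\caA_{L}}\otimes\omega_{i}\vert_{\caA_{R}}$, so it suffices to produce, for $\vp=L,R$, an automorphism $\Xi_{\vp}\in\ainn^{G}(\caA_{\vp})$ with $\omega_{0}\vert_{\caA_{\vp}}\circ\Xi_{\vp}\sim_{q.e.}\omega_{1}\vert_{\caA_{\vp}}$; then $\omega_{0}\circ(\Xi_{L}\otimes\Xi_{R})\sim_{q.e.}(\omega_{0}\vert_{\caA_{L}}\circ\Xi_{L})\otimes(\omega_{0}\vert_{\caA_{R}}\circ\Xi_{R})\sim_{q.e.}\omega_{1}\vert_{\caA_{L}}\otimes\omega_{1}\vert_{\caA_{R}}\sim_{q.e.}\omega_{1}$, using that a tensor product of pairwise quasi-equivalent states is quasi-equivalent. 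Such $\Xi_{\vp}$ are provided by the half-chain homogeneity theorem of Section~\ref{homogeneity}---two $\tau_{\vp}$-invariant split states on $\caA_{\vp}$ with the same associated second cohomology class are related by an element of $\ainn^{G}(\caA_{\vp})$---applied to $\omega_{0}\vert_{\caA_{\vp}}$ and $\omega_{1}\vert_{\caA_{\vp}}$, whose classes agree by the previous step.

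The main obstacle is this half-chain homogeneity input; granted it, the theorem is an assembly. If one must prove it, the hard part is constructing the symmetric, $\caA_{\vp}^{G}$-asymptotically-inner automorphism: fix a reference state on $\caA_{\vp}$---roughly, a suitably blocked valence-bond state whose edge carries a projective representation with cocycle representing the prescribed class and in whose bulk every $\sigma$-irreducible occurs with infinite multiplicity---and show that an arbitrary $\tau_{\vp}$-invariant split state with the same class can be driven to it by local, symmetry-preserving perturbations matched on growing initial segments, the matching being available precisely because equality of the cohomology class lets the edge projective representations be identified after absorbing auxiliary multiplicity into the bulk. The remaining ingredients are routine: the stability of quasi-equivalence under restriction to $\caA_{\vp}$ (handled via the split property as above) and under tensor products, and the $\tau_{\vp}$-covariance and composition behaviour of $\ainn^{G}(\caA_{\vp})$.
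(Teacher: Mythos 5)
Your proof of Theorem~\ref{main} itself, modulo the half-chain homogeneity input, matches the paper's argument essentially step for step: the ``only if'' direction via Proposition~\ref{unieq} (showing $(\caL_{\omega_{0},R},\rho_{\omega_{0},R}\circ\Xi_{R},u_{\omega_{0},R},\sigma_{\omega_{0},R})$ is a quadruple for $(\omega_{0}\vert_{\caA_{R}}\circ\Xi_{R},\tau_{R})$ and that $c_{\cdot,R}$ is a quasi-equivalence invariant of the restriction); the reduction from $c_{\omega_{0},R}=c_{\omega_{1},R}$ to $c_{\omega_{0},L}=c_{\omega_{1},L}$ via the relation $c_{\omega,L}c_{\omega,R}=1$ in $H^{2}(G,\bbT)$ (the paper's Lemma~\ref{clmcr}); and the ``if'' direction by tensoring half-chain automorphisms $\Xi_{L},\Xi_{R}$ and using the split property to glue. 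So your assembly is sound, and your insistence that homogeneity only produces $\omega_{0}\vert_{\caA_{\vp}}\circ\Xi_{\vp}\sim_{q.e.}\omega_{1}\vert_{\caA_{\vp}}$ (rather than equality) is exactly right.

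The genuinely different part is your sketch of how the half-chain homogeneity itself should be proved. You propose transporting an arbitrary $\tau_{\vp}$-invariant split state with prescribed class to a fixed ``valence-bond'' reference state by local symmetry-preserving perturbations matched on growing segments. The paper does something structurally different: it does not introduce a reference state. It replaces $\omega_{i}\vert_{\caA_{\vp}}$ by a quasi-equivalent state $\varphi_{i,\vp}$ built as the restriction to $\caA_{\Gamma_{\vp}}$ of a carefully chosen pure state $\hat\varphi_{\xi_{i,\vp}}$ on the \emph{enlarged} algebra $\caB(\caH_{\alpha_{\vp}})\otimes C^{*}(\Sigma_{\Gamma_{\vp}}^{(\sigma_{\vp})})$, where the vector $\Omega_{\alpha}\otimes\xi_{i,\vp}$ is the point: because $u_{\omega_{i},\vp}$ has no invariant vector when $\sigma_{\vp}$ is nontrivial, one must double up by $\caH_{\alpha}$ to get an invariant vector state whose $\caA_{\Gamma_{\vp}}$-restriction is $\tau_{\Gamma_{\vp}}$-invariant. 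The two $\hat\varphi_{\xi_{i,\vp}}$ are then related directly by the Kishimoto--Ozawa--Sakai machinery run inside $\caA_{\Gamma_{\vp}}^{G}$ (Lemmas~\ref{lem2}--\ref{lem8}), with the decompositions from Lemma~\ref{lem11iii} supplying the needed Kadison transitivity and Kaplansky density in the $G$-fixed subalgebra. Your reference-state route is a reasonable heuristic but is not the paper's proof, and as sketched it elides precisely the nontrivial-cocycle issue that forces the paper into $\caB(\caH_{\alpha})\otimes C^{*}(\Sigma_{\Gamma}^{(\sigma)})$; making it rigorous would require a comparable amount of new argument. Since you flag this part explicitly as the ``main obstacle'' rather than claiming a proof of it, your submission is honest about the gap; but the homogeneity statement is where the paper's real content lives, so the comparison is worth noting.
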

The ''only if'' part of the Theorem \ref{main} is easy to prove.
 In order to prove ''if'' part of the Theorem, we note that
if $c_{\omega_{1},R}=c_{\omega_{0},R}$ holds,
 $\omega_{0}$ and $\omega_{1}$
give covariant representations of a twisted $C^{*}$-dynamical systems 
 $\Sigma_{\Gamma_{R}}^{(\sigma_{R})}$, $\Sigma_{\Gamma_{L}}^{(\sigma_{L})}$
 (see section \ref{crossedproduct}), where
 $\sigma_{R}$, $\sigma_{L}$ are
  $2$-cocycles of $G$  such that
 $[\sigma_{R}]_{H^{2}(G,\bbT)}=c_{\omega_{1},R}=c_{\omega_{0},R}$
 and $[\sigma_{L}]_{H^{2}(G,\bbT)}=c_{\omega_{1},L}=c_{\omega_{0},L}$.
 (See Remark \ref{rem17} and Lemma \ref{clmcr}.)
 One of the basic idea is to encode the information of these 
 $2$-cocycles $\sigma_{R}$, $\sigma_{L}$
 into $C^{*}$-algebras we consider.
 Namely, instead of considering $\caA_{R}$, $\caA_{L}$,we consider 
 the  the twisted crossed products $C^{*}(\Sigma_{\Gamma_{R}}^{(\sigma_{R})})$,
 $C^{*}(\Sigma_{\Gamma_{L}}^{(\sigma_{L})})$.
 We recall the twisted crossed product $C^{*}(\Sigma_{\Gamma}^{(\sigma)})$ of $\Sigma_{\Gamma}^{(\sigma)}$ in section \ref{crossedproduct}.
 In section  \ref{projsec}, we show that
for any $\omega\in SPG(\caA)$, and $\vp=L,R$, $u_{\omega,{\vp}}$
contains all elements of $\caP_{\sigma_{\omega,{\vp}}}$.
Therefore, for any fixed $\alpha_{\vp}\in \caP_{\sigma_{{\vp}}}$, both of $u_{\omega_{0},{\vp}}$ and $u_{\omega_{1},{\vp}}$
 contains $\alpha_{\vp}$.
 This fact allows us to regard 
 the problem as the homogeneity problem of 
$ \caB(\caH_{\alpha_{\vp}})\otimes C^{*}(\Sigma_{\Gamma_{\vp}}^{(\sigma_{\vp})})$,
with symmetry
(section \ref{homogeneity}).
The proof of the homogeneity relies on the 
machinery developed in 
\cite{powers}, \cite{brat}, \cite{fkk},\cite{kos}.
However, for our problem, we would like to take the path of
unitaries in the fixed point algebras $\caA_{R}^{G}$, $\caA_{L}^{G}$.
This requires some additional argument using the irreducible decompositions of 
$u_{\omega_{0},{\vp}}$, $u_{\omega_{1},{\vp}}$.
This is given in section \ref{homogeneity}.
 
\section{Irreducible components in $u_{\omega,\vp}$}\label{projsec}

In this section we show that 
$u_{\omega,\vp}$ contains all elements in 
$\caP_{\sigma_{\omega,\vp}}$ with infinite multiplicity.

 As $G$ is a finite group, its dual $\hat G$ is a finite set and we denote the number of the elements in $\hat G$ by $|\hat G|$. 
We use the following notation for any unitary/projective unitary representations $V_1$, $V_2$.
 We write
 $V_1\prec V_2$ if $V_1$ is unitarily equivalent to a sub-representation of $V_2$.
We also say $V_1$ is included in $V_2$ in this case.
Clearly, $\prec$ is a preorder.
We write $V_{1}\cong V_{2}$ if $V_1$ and $V_2$ are unitarily equivalent.

For a unitary representation  (resp. projective unitary representation) of $G$,
we denote by $\bar V$ the complex conjugate representation (resp. projective representation)  of $V$.
(See \cite{simon} section II.6.)

\begin{lem}\label{simple}
There is an $l_{0}\in\nan$
such that for any $l\ge l_{0}$, the tensor product $U^{\otimes l}$
contains any irreducible representation of $G$ as its irreducible component.
\end{lem}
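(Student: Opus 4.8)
The plan is to reduce the claim to a standard fact about the regular representation. Recall that the assumption \eqref{uffl} says $U(g)\notin \bbC\unit_{\bbC^d}$ for $g\neq e$; equivalently, the character $\chi_U$ satisfies $|\chi_U(g)| < \chi_U(e) = d$ for all $g\neq e$. First I would fix an arbitrary irreducible representation $\pi$ of $G$ with character $\chi_\pi$, and consider the multiplicity of $\pi$ in $U^{\otimes l}$, which by character theory equals
\begin{align}
m_{\pi}(l) = \frac{1}{|G|}\sum_{g\in G} \chi_U(g)^{l}\,\overline{\chi_\pi(g)}.
\end{align}
I want to show $m_\pi(l) > 0$ for all large $l$. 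The term $g=e$ contributes $\frac{1}{|G|} d^{l}\,\overline{\chi_\pi(e)} = \frac{d^l \dim\pi}{|G|}$, which grows like $d^l$. Every other term is bounded in absolute value by $\frac{1}{|G|}|\chi_U(g)|^{l}\dim\pi$, and since $|\chi_U(g)| < d$ for $g\neq e$ by \eqref{uffl}, each such term is $o(d^l)$. Hence $m_\pi(l) = \frac{\dim\pi}{|G|}\bigl(d^l + o(d^l)\bigr) > 0$ once $l$ is large enough; more concretely, $m_\pi(l)>0$ as soon as $d^l > \sum_{g\neq e}|\chi_U(g)|^l$, and choosing the threshold uniformly over the finitely many irreducibles $\pi\in\hat G$ gives a single $l_0$ that works for all of them.

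Alternatively, and perhaps more cleanly, I would argue that the representation $\bigoplus_{j=0}^{N} U^{\otimes j}$ eventually contains the regular representation of $G$: the span of the matrix coefficients of $U^{\otimes j}$ over all $j$ is a subalgebra of the function algebra on $G$ that separates points (because \eqref{uffl} guarantees that for $g\neq h$ some entry of $U(g)U(h)^{-1} = U(g)\overline{U(h)}^{\,T}$-type coefficient differs, so the coefficients of $U$ together with those of $\bar U$ separate points; and $\bar U$ appears inside $U^{\otimes(|G|-1)}$ up to a one-dimensional twist since $U^{\otimes|G|}$ contains the trivial rep by the argument above). By Stone–Weierstrass for finite groups (i.e. a point-separating, conjugation-closed, unital subalgebra of $\mathbb{C}[G]$ is everything), this subalgebra is all of $\mathbb{C}[G]$, so every irreducible character, hence every irreducible representation, occurs in some $U^{\otimes j}$, $j\le$ some bound. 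Then tensoring with further copies of $U$ only adds components (since $U^{\otimes l}$ for any $l$ contains the trivial representation by the first argument, $U^{\otimes(j+m)} \succ U^{\otimes j}\otimes(\text{trivial}) \cong U^{\otimes j}$ once $m$ is large), so for $l\ge l_0$ the tensor power $U^{\otimes l}$ contains all irreducibles simultaneously.

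The main obstacle is the monotonicity step: $U^{\otimes l}$ containing a given irreducible for one value of $l$ does not immediately give it for all larger $l$, since tensoring a representation containing $\pi$ with $U$ need not still contain $\pi$. The character-theoretic first argument sidesteps this entirely by proving positivity of $m_\pi(l)$ directly for all large $l$, so I would present that as the primary proof: it is short, uses only \eqref{uffl} via $|\chi_U(g)|<d$ for $g\neq e$, and yields the uniform $l_0$ by taking the maximum of the finitely many thresholds over $\pi\in\hat G$. The only point requiring a line of care is the strict inequality $|\chi_U(g)|<d$: equality $|\chi_U(g)|=d$ would force $U(g)$ to be a scalar (as the average of $d$ unit-modulus eigenvalues has modulus $d$ only if they all coincide), contradicting \eqref{uffl}.
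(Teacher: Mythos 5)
Your primary (character-theoretic) argument is essentially the same as the paper's proof: both compute the multiplicity of an irreducible $V$ in $U^{\otimes l}$ via $\frac{1}{|G|}\sum_g \chi_U(g)^l\overline{\chi_V(g)}$, observe that the $g=e$ term dominates because assumption \eqref{uffl} forces $|\chi_U(g)|<d$ for $g\neq e$, and obtain a uniform $l_0$ from finiteness of $\hat G$. The alternative Stone--Weierstrass sketch is not needed, and you are right to flag the monotonicity gap there and fall back on the direct positivity estimate.
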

\begin{proof}
Note that the character $\chi_{U}(g)$ is the sum of the eigenvalues of a unitary $U(g)$ acting on
$\bbC^{d}$.
Therefore, the maximal possible value of  $|\chi_{U}(g)|$ is $d$, which is equal to
$\chi_{U}(e)$.
This value is attained only if $U(g)\in\bbT\unit_{\bbC^{d}}$.
By the condition  (\ref{uffl}), for $g\in G\setminus \{e\}$,
$|\chi_{U}(g)|$ is strictly less than $d$.

Now for any irreducible representation $(\bbC^{m},V)$ of  $G$,
for any $l\in\nan$, we have
\begin{align}\label{nncc}
\sum_{g\in G}\overline{\chi_{V}(g)}\chi_{U^{\otimes l}}(g)
=d^{l}\cdot m \lmk
1+\sum_{g\in G\setminus \{e\}}
\frac{\overline{\chi_{V}(g)}}{m}
\lmk \frac{\chi_{U}(g)}{ d}
\rmk^{l}
\rmk.
\end{align}
Note that 
\begin{align}
\frac{\overline{\chi_{V}(g)}}{m}
\lmk \frac{\chi_{U}(g)}{ d}
\rmk^{l}
\end{align}
for $g\in G\setminus \{e\}$
converges to $0$ because of $|\chi_{U}(g)|<d$.
Therefore, for $l$ large enough, 
the left hand side of (\ref{nncc}) is non-zero.
In other word, for $l$ large enough, $V$ is an irreducible component of
$U^{\otimes l}$.
As $\hat G$ is a finite set, this proves the Lemma.
\end{proof}

From this we obtain the following.

\begin{lem}\label{abu}
There is an $l_{0}\in\nan$
such that 
$\alpha\prec \beta\otimes U^{\otimes l}$
holds for any 
$\sigma\in Z^{2}(G,\bbT)$,
$\alpha,\beta\in\pgs$,
and 
$l_{0}\le l\in\nan$.
\end{lem}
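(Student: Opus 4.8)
The plan is to reduce the statement to Lemma \ref{simple} by a standard Schur-orthogonality argument, where the key observation is that $\beta\otimes U^{\otimes l}$ has $2$-cocycle $\sigma$ (since $U^{\otimes l}$ is an honest representation, i.e. has trivial cocycle $1_{Z^2(G,\bbT)}$, and the tensor product multiplies cocycles). So $\beta\otimes U^{\otimes l}$ decomposes into irreducibles all lying in $\pgs$, and it makes sense to ask whether the given $\alpha\in\pgs$ occurs. First I would fix $l_0$ as in Lemma \ref{simple}, so that for every $l\ge l_0$ the representation $U^{\otimes l}$ contains every irreducible (genuine) representation of $G$; in particular it contains the trivial representation $\bbC$ with trivial action. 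Since $\hat G$ and each $\pgs$ are finite and $Z^2(G,\bbT)/B^2(G,\bbT) = H^2(G,\bbT)$ is finite, I only need to handle finitely many pairs $(\sigma,\alpha,\beta)$ — but in fact a uniform argument works directly.

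The core step: for $\alpha,\beta\in\pgs$ and $l\ge l_0$, I want to show the multiplicity of $\alpha$ in $\beta\otimes U^{\otimes l}$ is positive. Using the character/orthogonality machinery from (\ref{orthog}) (the projective analogue of Theorem III.1.1 of \cite{simon}), this multiplicity is
\begin{align}
m(\alpha,\beta\otimes U^{\otimes l})
=\frac1{|G|}\sum_{g\in G}\overline{\chi_{V_\alpha}(g)}\,\chi_{V_\beta}(g)\,\chi_{U^{\otimes l}}(g),
\end{align}
which is a non-negative integer. It suffices to show the right-hand side is nonzero. Writing $\chi_{U^{\otimes l}}(g)=\chi_U(g)^l$ and factoring out the $g=e$ term, the sum equals
\begin{align}
\frac{d^l\, n_\beta\, n_\alpha}{|G|}
\lmk 1+\sum_{g\in G\setminus\{e\}}
\frac{\overline{\chi_{V_\alpha}(g)}\,\chi_{V_\beta}(g)}{n_\alpha n_\beta}
\lmk\frac{\chi_U(g)}{d}\rmk^l\rmk,
\end{align}
exactly as in (\ref{nncc}). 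Since $|\chi_{V_\alpha}(g)|\le n_\alpha$ and $|\chi_{V_\beta}(g)|\le n_\beta$ for all $g$, and since $|\chi_U(g)|<d$ for $g\neq e$ by (\ref{uffl}) as established in the proof of Lemma \ref{simple}, each term in the sum over $g\neq e$ tends to $0$ as $l\to\infty$. Hence for $l$ large enough the bracket is nonzero, so $\alpha\prec\beta\otimes U^{\otimes l}$. Finiteness of the index set $\{(\sigma,\alpha,\beta)\}$ (again, $H^2(G,\bbT)$ finite, each $\pgs$ finite) lets me take a single $l_0$ working for all of them; alternatively one just notes the bound $|\chi_U(g)/d|^l\cdot 1$ is uniform once we replace $|\chi_{V_\alpha}(g)\chi_{V_\beta}(g)|/(n_\alpha n_\beta)\le 1$, so the same $l_0$ from Lemma \ref{simple} already suffices with no separate argument.

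The main obstacle, such as it is, is purely bookkeeping: making sure the projective characters obey the orthogonality relation (\ref{orthog}) in the form needed — i.e. that multiplicities in a $\sigma$-representation are computed by the same inner-product formula as in the genuine case — and that the cocycle of $\beta\otimes U^{\otimes l}$ is genuinely $\sigma$ (not $\sigma$ times something), so that $\alpha$ and the irreducible constituents of $\beta\otimes U^{\otimes l}$ live in the same category $\pgs$ and "$\alpha\prec\beta\otimes U^{\otimes l}$" is meaningful. Both are immediate from the definitions in subsection \ref{pp}. There is no analytic difficulty; the decay $|\chi_U(g)/d|^l\to 0$ is the same mechanism already used in Lemma \ref{simple}, now with the extra bounded factor $\overline{\chi_{V_\alpha}(g)}\chi_{V_\beta}(g)/(n_\alpha n_\beta)$ of modulus $\le 1$ absorbed harmlessly.
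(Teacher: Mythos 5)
Your proof is correct, but it takes a different route from the paper. The paper reduces Lemma \ref{abu} to Lemma \ref{simple} by passing through the conjugate representation: since $\bar\beta$ has cocycle $\bar\sigma=\sigma^{-1}$, the product $\alpha\otimes\bar\beta$ is a \emph{genuine} representation of $G$, so it has an irreducible constituent $V\in\hat G$; Lemma \ref{simple} then gives $V\prec U^{\otimes l}$ for $l\ge l_0$, and the character identity $\sum_g\overline{\chi_\alpha(g)}\chi_{\beta\otimes U^{\otimes l}}(g)=\sum_g\overline{\chi_{\alpha\otimes\bar\beta}(g)}\chi_{U^{\otimes l}}(g)\ge\sum_g\overline{\chi_V(g)}\chi_{U^{\otimes l}}(g)>0$ finishes the argument. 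You instead rerun the decay estimate from the proof of Lemma \ref{simple} directly in the projective setting, computing the multiplicity $\frac1{|G|}\sum_g\overline{\chi_{V_\alpha}(g)}\chi_{V_\beta}(g)\chi_U(g)^l$ and using only $\lv\chi_{V_\alpha}(g)\chi_{V_\beta}(g)\rv/(n_\alpha n_\beta)\le1$ together with $\lv\chi_U(g)\rv<d$ for $g\neq e$. Both are sound and of comparable length; the paper's route avoids redoing the estimate and keeps the dependence on Lemma \ref{simple} purely structural, while yours is more self-contained and makes the uniformity of $l_0$ transparent, since the only quantity that must decay is $\sum_{g\neq e}\lv\chi_U(g)/d\rv^l$ independently of $\alpha,\beta,\sigma$. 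One small caveat: the remark that ``the same $l_0$ from Lemma \ref{simple} already suffices'' presumes that $l_0$ there was chosen via the uniform bound $\sum_{g\neq e}\lv\chi_U(g)/d\rv^{l_0}<1$ rather than merely as a per-$V\in\hat G$ threshold; this is harmless, since your earlier appeal to the finiteness of $H^2(G,\bbT)$ and of each $\pgs$ already covers it, and in any case one may simply enlarge $l_0$.
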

\begin{proof}
Let $l_{0}$ be the number given in Lemma \ref{simple}.
For any $\sigma\in Z^{2}(G,\bbT)$ and
$\alpha,\beta\in\pgs$,
$\alpha\otimes \bar \beta$ is a genuine representation of $G$.
Let $V\in \hat G$ be an irreducible component of $\alpha\otimes \bar \beta$.
By Lemma \ref{simple},
this $V$ is realized as an irreducible component of $U^{\otimes l}$ for $l\ge l_{0}$. 
Therefore, for $l\ge l_{0}$, we have
\begin{align}
\sum_{g\in G}\overline{\chi_{\alpha}(g)}\chi_{ \beta\otimes U^{\otimes l}}(g)=
\sum_{g\in G}\overline{\chi_{\alpha}(g)\chi_{\bar \beta}(g)}\chi_{U^{\otimes l}}(g)
\ge
\sum_{g\in G}\overline{\chi_{V}(g)}\chi_{U^{\otimes l}}(g)>0.
\end{align}
This means $\alpha\prec \beta\otimes U^{\otimes l}$.
\end{proof}

\begin{lem}\label{smb}
Let $\sigma\in Z^{2}(G,\bbT)$ be a fixed $2$-cocycle.
For any $m\in\nan$, there exists an $N_{m}^{(\sigma)}\in\nan$ satisfying the following:
For
any projective unitary representation $(\caH,u)$ of $G$ with $2$-cocycle
$\sigma$,
$\alpha\in\caP_{\sigma}$, and $\nan\ni N\ge N_{m}^{(\sigma)}$,
we have
\begin{align}
m\cdot \alpha\prec U^{\otimes N}\otimes u.
\end{align}
(Here $m\cdot \alpha$ denotes the $m$ direct sum of $\alpha$. )
\end{lem}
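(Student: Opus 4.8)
The plan is to reduce to the case where $u$ is one of the finitely many irreducible elements of $\pgs$, and then to estimate a multiplicity by a character argument. We may assume $\caH\neq\{0\}$, since otherwise there is nothing to prove; then by Lemma \ref{pd} the representation $u$ contains some irreducible component $\beta\in\pgs$, so $\beta\prec u$ and hence $U^{\otimes N}\otimes\beta\prec U^{\otimes N}\otimes u$ for every $N\in\nan$. Thus it suffices to find $N_m^{(\sigma)}\in\nan$ such that $m\cdot\alpha\prec U^{\otimes N}\otimes\beta$ holds for \emph{all} $\alpha,\beta\in\pgs$ and all $N\ge N_m^{(\sigma)}$; and since $\pgs$ is finite, it is enough to prove, for each fixed pair $(\alpha,\beta)$, that the multiplicity of $\alpha$ in $U^{\otimes N}\otimes\beta$ admits a lower bound that is independent of $(\alpha,\beta)$ and tends to $\infty$ as $N\to\infty$.

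To obtain such a bound, fix $\alpha,\beta\in\pgs$. Since $U$ is a genuine representation, $U^{\otimes N}\otimes\beta$ is a projective unitary representation with $2$-cocycle $\sigma$, so the multiplicity of $\alpha$ in it is well defined and, by the standard character formula, equals $\frac{1}{|G|}\sum_{g\in G}\overline{\chi_\alpha(g)}\,\chi_{U^{\otimes N}\otimes\beta}(g)$; this formula follows from (\ref{orthog}) by summing over the diagonal matrix-coefficient indices, which gives $\frac{1}{|G|}\sum_{g\in G}\overline{\chi_\alpha(g)}\chi_\gamma(g)=\delta_{\alpha,\gamma}$ for $\alpha,\gamma\in\pgs$. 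Using $\chi_{U^{\otimes N}\otimes\beta}(g)=\chi_U(g)^{N}\chi_\beta(g)$ and separating the term $g=e$, this multiplicity equals
\begin{align*}
\frac{n_\alpha\,n_\beta\,d^{N}}{|G|}+\frac{1}{|G|}\sum_{g\in G\setminus\{e\}}\overline{\chi_\alpha(g)}\,\chi_U(g)^{N}\,\chi_\beta(g).
\end{align*}
As in the proof of Lemma \ref{simple}, condition (\ref{uffl}) gives $\delta:=\max_{g\in G\setminus\{e\}}|\chi_U(g)|<d$, while $|\overline{\chi_\alpha(g)}\chi_\beta(g)|\le n_\alpha n_\beta\le C_0:=\max_{\alpha',\beta'\in\pgs}n_{\alpha'}n_{\beta'}<\infty$; since also $n_\alpha n_\beta\ge 1$, the multiplicity of $\alpha$ in $U^{\otimes N}\otimes\beta$ is at least
\begin{align*}
\frac{d^{N}}{|G|}-C_0\,\delta^{N},
\end{align*}
a quantity that does not depend on $\alpha,\beta$ and, because $\delta<d$, tends to $\infty$ as $N\to\infty$. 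Choosing $N_m^{(\sigma)}\in\nan$ so that this quantity is $\ge m$ for all $N\ge N_m^{(\sigma)}$ finishes the proof, in view of the first paragraph.

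I do not expect a genuine obstacle; the only points needing a word of care are the passage from the matrix-coefficient orthogonality (\ref{orthog}) to the character form of the multiplicity formula for projective representations that share the cocycle $\sigma$, and the remark that finiteness of $\pgs$ is what makes the lower bound uniform in $(\alpha,\beta)$. One could also argue without characters: by Lemma \ref{abu} the representation $U^{\otimes l_0}\otimes\beta$ contains every $\gamma\in\pgs$, hence contains $\bigoplus_{\gamma\in\pgs}\gamma$, and iterating this (together with the fact, from Lemma \ref{simple}, that $U^{\otimes l_0}$ contains the trivial representation, used to absorb an $N$ that is not a multiple of $l_0$) boosts the multiplicity of $\alpha$; but that variant needs a separate dimension count when $|\pgs|=1$, so the character argument above is cleaner.
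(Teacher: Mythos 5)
Your proof is correct, but it takes a genuinely different route from the paper's. The paper splits into two cases according to whether $|\pgs|=1$ or $|\pgs|>1$: in the first case a bare dimension count suffices (every projective representation with cocycle $\sigma$ is a multiple of the unique $\alpha$), while in the second case it invokes Lemma \ref{abu}, writes $N=k_1+\cdots+k_{M_m+1}$ with each $k_j\ge l_0$, and chains the tensor factors to force the multiplicity above $|\pgs|^{M_m}>m$ by a combinatorial count of paths $\gamma_1\to\gamma_2\to\cdots$. You instead first reduce $u$ to a single irreducible $\beta\prec u$ (a harmless and clean reduction), then estimate the multiplicity of $\alpha$ in $U^{\otimes N}\otimes\beta$ directly via characters: the $g=e$ term contributes $n_\alpha n_\beta d^N/|G|\ge d^N/|G|$, the remaining terms are bounded by $C_0\,\delta^N$ with $\delta=\max_{g\ne e}|\chi_U(g)|<d$ from (\ref{uffl}), and since $\pgs$ is finite the constant $C_0$ is uniform. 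This treats both cases at once and gives an explicit, uniform lower bound $d^N/|G|-C_0\delta^N\to\infty$, which is cleaner and more quantitative than the paper's argument. The one preliminary point you rightly flag — that the orthogonality relation (\ref{orthog}) yields $\frac{1}{|G|}\sum_{g}\overline{\chi_\alpha(g)}\chi_\gamma(g)=\delta_{\alpha,\gamma}$ for $\alpha,\gamma\in\pgs$, and hence the character form of the multiplicity formula for projective representations sharing the cocycle $\sigma$ — is exactly the summation over diagonal indices you describe, so the argument is complete. In exchange for this directness, your approach leans a bit more heavily on the character machinery for projective representations, whereas the paper's Lemma \ref{abu} route uses characters only once (inside Lemma \ref{simple}, for genuine representations) and then proceeds by pure "containment" bookkeeping; but both routes ultimately rest on the same ingredient $|\chi_U(g)|<d$ for $g\ne e$.
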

\begin{proof}
First let us consider the case that 
$\caP_{\sigma}$ consists of a unique element $\alpha\in \caP_{\sigma}$.
Then for any $N\in\nan$, and any projective representation $(\caH,u,\sigma)$, 
the multiplicity of $\alpha$ in $U^{\otimes N}\otimes u$ is $\frac{d^{N}\cdot\dim{\caH}}{n_{\alpha}}$,
which is bigger or equal to a $(\caH,u)$-independent value $\frac{d^{N}}{n_{\alpha}}$.
The claim of Lemma \ref{smb} follows from this immediately for this case.

Next let us consider the case that the number of elements $|\caP_{\sigma}|$ in $\caP_{\sigma}$,
is larger than $1$.
From Lemma \ref{abu}, choose $l_{0}\in \nan$ so that
$\alpha\prec\beta\otimes U^{\otimes l}$
for all $l\ge l_{0}$ and $\alpha,\beta\in\pgs$.
For any $m\in\nan$, choose $M_{m}\in\nan$ so that 
$|\caP_{\sigma}|^{M_{m}}>m$. Here we use the condition that $|\caP_{\sigma}|>1$.
We set $N_{m}^{(\sigma)}:=l_{0}(M_{m}+1)$.
Let $(\caH,u)$ be a projective unitary representation  of $G$ with $2$-cocycle
$\sigma$,
$\alpha\in\caP_{\sigma}$, and $\nan\ni N\ge N_{m}^{(\sigma)}$.
We would like to show that $m\cdot \alpha\prec U^{\otimes N}\otimes u$.
By the choice of $N_{m}^{(\sigma)}$, $N$ can be decomposed as
$N=k_{1}+k_{2}+\cdots+k_{M_{m}}+k_{M_{m}+1}$ with some $l_{0}\le k_{j}\in\nan$, $j=1,\ldots,M_{m}+1$.
For each $j=1,\ldots,M_{m}+1$ and $\beta,\gamma\in\pgs$,
we denote the multiplicity of $\gamma$ in $U^{\otimes k_{j}}\otimes \beta$ 
by $n_{\beta,\gamma}^{(j)}$.
From the choice of $l_{0}$, we have $1\le n_{\beta,\gamma}^{(j)}$
for any $j=1,\ldots,M_{m}+1$ and $\beta,\gamma\in\pgs$.
Fix some $\beta_{0}\in\pgs$ such that $\beta_{0}\prec u$.
From this, we get
\begin{align}
&m\cdot \alpha\prec |\caP_{\sigma}|^{M_{m}}\cdot \alpha
=
\bigoplus_{\gamma_{1},\gamma_{2},\ldots,\gamma_{ {M_{m}}}}\alpha
\prec 
\bigoplus_{\gamma_{1},\gamma_{2},\ldots,\gamma_{ {M_{m}}}, \gamma_{ {M_{m}+1}}}
n_{\beta_0,\gamma_{1}}^{(1)}n_{\gamma_{1},\gamma_{2}}^{(2)}\cdots
n_{\gamma_{{M_{m}}},{\gamma_{ {M_{m}+1}}}}^{( {M_{m}+1})}\cdot \gamma_{ {M_{m}+1}}\nonumber\\
&
\prec U^{\otimes k_{M_{m}+1}}\otimes U^{\otimes k_{M_{m}}}\otimes
\cdots U^{\otimes k_{2}}\otimes U^{\otimes k_{1}}\otimes\beta_0
=
U^{\otimes N}\otimes \beta_0\prec
U^{\otimes N}\otimes u.
\end{align}
This completes the proof.
\end{proof}

Now we are ready to show the main statement of this section.
From the following Lemma, we see that for any $\omega\in SPG(\caA)$,
$u_{\omega,\vp}$ contains all elements of $\caP_{\sigma_{\omega,\vp}}$
with infinite multiplicity.

\begin{thm}\label{zbdip}
Let $\Gamma$ be an infinite subset of $\bbZ$.
Let $(\caL,\rho, u,\sigma)$ be a quadruple such that
\begin{description}
\item[(i)]
 $\rho$ is a $*$-representation of $\caA_{\Gamma}$ on a Hilbert space
 $\caL$,
\item[(ii)]
$u$ is a projective unitary representation  of $G$ on $\caL$
with a $2$-cocycle $\sigma$,
\item[(iii)]
for any $g\in G$, we have
\begin{align}
\rho\circ \tau_{\Gamma}(g)
=\Ad\lmk u(g)\rmk\circ \rho.
\end{align}
\end{description}
Then $u$ contains all elements of $\pgs$ with infinite multiplicity.
\end{thm}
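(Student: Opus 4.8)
The plan is to exploit that $\Gamma$, being infinite, contains finite subsets $\Lambda$ of arbitrarily large cardinality, on each of which $\caA_{\Lambda}\cong\Mat_{d^{|\Lambda|}}$ is a full matrix algebra carrying the \emph{genuine} representation $\bigotimes_{z\in\Lambda}U$ of $G$ through $\tau_{\Gamma}$; feeding this into Lemma \ref{smb} produces arbitrarily large multiplicities inside $u$. Concretely, fix $\alpha\in\pgs$ and $m\in\nan$. We may assume $\rho$ is unital (equivalently nondegenerate, since $\caA_{\Gamma}$ is unital) and $\caL\neq\{0\}$; the general case with $\caL\neq\{0\}$ reduces to this by restricting to the $u$-invariant subspace $\overline{\rho(\caA_{\Gamma})\caL}$. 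It then suffices to prove $m\cdot\alpha\prec u$, because once this holds for every $m$, the irreducible decomposition of Lemma \ref{pd} forces the $\alpha$-multiplicity space of $u$ to be infinite-dimensional, and $\alpha\in\pgs$ is arbitrary. Let $N_{m}^{(\sigma)}$ be the integer furnished by Lemma \ref{smb}, and choose a finite $\Lambda\subset\Gamma$ with $N:=|\Lambda|\ge N_{m}^{(\sigma)}$, which is possible since $\Gamma$ is infinite.

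Next I analyze $\rho$ restricted to $\caA_{\Lambda}\cong\Mat_{d^{N}}$. A unital $*$-representation of a full matrix algebra is a multiple of the identity representation, so there are a nonzero Hilbert space $\caK_{\Lambda}$ and a unitary identifying $\caL$ with $\bigl(\bigotimes_{z\in\Lambda}\bbC^{d}\bigr)\otimes\caK_{\Lambda}$ under which $\rho(a)=a\otimes\unit_{\caK_{\Lambda}}$ for all $a\in\caA_{\Lambda}$. For $a\in\caA_{\Lambda}$ we have $\tau_{\Gamma}(g)(a)=\Ad\bigl(\bigotimes_{z\in\Lambda}U(g)\bigr)(a)$, so hypothesis (iii) says that $\Ad(u(g))$ and $\Ad\bigl(\bigl(\bigotimes_{z\in\Lambda}U(g)\bigr)\otimes\unit_{\caK_{\Lambda}}\bigr)$ agree on $\rho(\caA_{\Lambda})=\caB\bigl(\bigotimes_{z\in\Lambda}\bbC^{d}\bigr)\otimes\unit_{\caK_{\Lambda}}$. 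Hence $u(g)^{*}\bigl(\bigl(\bigotimes_{z\in\Lambda}U(g)\bigr)\otimes\unit_{\caK_{\Lambda}}\bigr)$ lies in the commutant $\unit\otimes\caB(\caK_{\Lambda})$, i.e.\ $u(g)=\bigl(\bigotimes_{z\in\Lambda}U(g)\bigr)\otimes v_{\Lambda}(g)$ for some $v_{\Lambda}(g)\in\caU(\caK_{\Lambda})$. Since $\bigotimes_{z\in\Lambda}U$ is a genuine representation, comparing the two sides of $u(g)u(h)=\sigma(g,h)u(gh)$ in this tensor factorization yields $v_{\Lambda}(g)v_{\Lambda}(h)=\sigma(g,h)v_{\Lambda}(gh)$, so $(\caK_{\Lambda},v_{\Lambda})$ is a projective unitary representation of $G$ with $2$-cocycle \emph{exactly} $\sigma$, and $u\cong U^{\otimes N}\otimes v_{\Lambda}$ as projective representations. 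Applying Lemma \ref{smb} to $(\caK_{\Lambda},v_{\Lambda})$ with $N\ge N_{m}^{(\sigma)}$ then gives $m\cdot\alpha\prec U^{\otimes N}\otimes v_{\Lambda}\cong u$, as desired.

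The one point that requires care is the exact tensor splitting $u(g)=\bigl(\bigotimes_{z\in\Lambda}U(g)\bigr)\otimes v_{\Lambda}(g)$: what matters is that $v_{\Lambda}$ carries the cocycle $\sigma$ on the nose, not merely a cohomologous one, and this is forced by $\bigotimes_{z\in\Lambda}U$ being an honest representation. That exactness is precisely what allows us to invoke Lemma \ref{smb} with $u'=v_{\Lambda}$; everything else is routine bookkeeping with Lemma \ref{pd}.
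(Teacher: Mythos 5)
Your proposal is correct and follows essentially the same route as the paper's own proof: both factorize $(\caL,\rho,u)$ across a finite cut $\Lambda\subset\Gamma$ with $|\Lambda|\ge N_m^{(\sigma)}$, observe that $u$ splits as $U^{\otimes\Lambda}\otimes\tilde u$ with $\tilde u$ a projective representation carrying the cocycle $\sigma$ on the nose (because $U^{\otimes\Lambda}$ is a genuine representation), and then invoke Lemma~\ref{smb}. The only cosmetic difference is that the paper constructs the unitary $W:\caL\to(\bigotimes_{\Lambda}\bbC^{d})\otimes\tilde\caL$ explicitly via matrix units, whereas you cite the standard structure of unital $*$-representations of $\Mat_{d^{N}}$; your explicit reduction to the nondegenerate case with $\caL\neq\{0\}$ is a small tightening of a hypothesis the paper leaves implicit.
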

\begin{proof}
Fix any $\alpha\in\caP_{\sigma}$ and $m\in\nan$.
We would like to show that $m\cdot \alpha\prec u$.
Let $N_{m}^{(\sigma)}$ be the number given in Lemma \ref{smb} for this fixed $m$.
Let $\Lambda$ be a subset of $\Gamma$ such that  $|\Lambda|=N_{m}^{(\sigma)}$.
We may factorize $(\caL, \rho, u)$
to $\Lambda$-part and $\Gamma\setminus \Lambda$-part as follows:
There exist a $*$-representation  $(\tilde \caL,\tilde \rho)$ of $\caA_{\Gamma\setminus \Lambda}$ and a projective unitary representation $\tilde u$ of $G$ on $\tilde\caL$
with $2$-cocycle $\sigma$, implementing $\tau_{\Gamma\setminus \Lambda}$.
There exists a unitary $W:\caL\to \lmk \bigotimes_{\Lambda}\bbC^{d}\rmk\otimes \tilde\caL$
such that
\begin{align}\label{prev}
W\rho(a)W^{*}=\lmk \id_{\caA_{\Lambda}}\otimes \tilde\rho\rmk (a),\quad a\in\caA_{\Gamma},
\end{align}
and
\begin{align}\label{up}
W u(g)W^{*}=\lmk \bigotimes_{\Lambda}U(g)\rmk\otimes \tilde u(g),\quad g\in G.
\end{align}

More precisely, set
 $I_{0}=(i_{k})_{k\in\Lambda}\in\{1,\ldots, d\}^{\times \Lambda}$, 
 with $i_{k}=1$ for all $k\in\Lambda$.
 We define the Hilbert space 
 $\tilde\caL$ by $\tilde\caL:=\rho\lmk E_{I_{0},I_{0}}^{(\Lambda)}\rmk\caL$,
 and the $*$-representation $\tilde\rho$ of $\caA_{\Gamma\setminus \Lambda}$ on $\tilde\caL$ by
 \begin{align}
 \tilde\rho\lmk a\rmk :=\rho\lmk  E_{I_{0},I_{0}}^{(\Lambda)}\otimes a\rmk,\quad a\in \caA_{\Gamma\setminus\Lambda}.
 \end{align}
 The unitary $W:\caL\to \lmk \bigotimes_{\Lambda}\bbC^{d}\rmk\otimes \tilde\caL$
 is defined by
 \begin{align}
 W\xi:=\sum_{I\in\{1,\ldots, n\}^{\times \Lambda}}e_{I}^{(\Lambda)}\otimes 
 \rho\lmk E_{I_{0},I}^{(\Lambda)}\rmk\xi,\quad \xi\in\caL.
 \end{align}
 It is straight forward to check (\ref{prev}).
 
 By a straight forward calculation using (\ref{prev}), we can check that
  $\lmk \lmk \bigotimes_{\Lambda}U(g)\rmk^{*}\otimes \unit_{\tilde \caL}\rmk Wu(g)W^{*}$ with $g\in G$
  commute with any element of $\lmk \bigotimes_{\Lambda}\Mat_{d}\rmk\otimes \bbC \unit_{\tilde\caL}$.
 Hence there exists a unitary $\tilde u(g)$ on $\tilde \caL$ such that
  $\lmk\lmk \bigotimes_{\Lambda}U(g)\rmk)^{*}\otimes\unit_{\tilde \caL}\rmk Wu(g)W^{*}=\unit_{\bigotimes_{\Lambda}\bbC^{d}}\otimes
  \tilde u(g)$. This gives (\ref{up}).
  It is straight forward to check that $\tilde u$ is 
  a projective unitary representation of $G$
with $2$-cocycle $\sigma$ implementing $\tau_{\Gamma\setminus \Lambda}$.

From (\ref{up}) and Lemma \ref{smb}, we have
\begin{align}
m\cdot \alpha \prec U^{\otimes  N_{m}^{(\sigma)}}\otimes\tilde u=
U^{\otimes  \Lambda}\otimes\tilde u
\cong u.
\end{align}
This completes the proof.

\end{proof}
Recall Definition \ref{pao}.
We note that $c_{\omega,R}$ and $c_{\omega,L}$
are not independent.
\begin{lem}\label{clmcr}
For any $\omega\in SPG(\caA)$, 
we have $c_{\omega,R}=c_{\omega,L}^{-1}$.
\end{lem}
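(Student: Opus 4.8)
The plan is to realise both classes $c_{\omega,L}$ and $c_{\omega,R}$ simultaneously inside a single GNS representation -- that of the pure state $\omega$ on the whole chain -- and then to exploit that on the whole chain the symmetry is implemented by a \emph{genuine} (untwisted) unitary representation of $G$. Since $\omega$ is pure, its GNS triple $(\caH_{\omega},\pi_{\omega},\Omega_{\omega})$ has $\pi_{\omega}$ irreducible and $\pi_{\omega}(\caA)''=\caB(\caH_{\omega})$. First I would observe that $\tau$-invariance of $\omega$ lets us define a unitary $U_{\omega}(g)$ on $\caH_{\omega}$ by $U_{\omega}(g)\pi_{\omega}(a)\Omega_{\omega}:=\pi_{\omega}(\tau(g)(a))\Omega_{\omega}$, that $g\mapsto U_{\omega}(g)$ is a \emph{genuine} unitary representation of $G$ (because $\tau$ is an action of $G$ and $\Omega_{\omega}$ is fixed), and that $\pi_{\omega}\circ\tau(g)=\Ad(U_{\omega}(g))\circ\pi_{\omega}$ for all $g\in G$.

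Next I would use the split property to factorise $\caH_{\omega}$. Since $\omega\in SP(\caA)$, Remark \ref{splitrem} gives $\pi_{\omega}\sim_{q.e.}\pi_{\omega_{L}}\otimes\pi_{\omega_{R}}$; combined with Definition \ref{split} (and its left-handed analogue in Remark \ref{splitrem}) this shows that $\caM_{\vp}:=\pi_{\omega}(\caA_{\vp})''$ is a type I factor for $\vp=L,R$. The algebras $\caM_{L}$ and $\caM_{R}$ commute (as $\caA_{L}$ and $\caA_{R}$ do) and together generate $\pi_{\omega}(\caA)''=\caB(\caH_{\omega})$; a routine fact about commuting type I factors then yields a unitary identification $\caH_{\omega}\cong\caL_{L}\otimes\caL_{R}$ under which $\caM_{L}=\caB(\caL_{L})\otimes\unit$ and $\caM_{R}=\unit\otimes\caB(\caL_{R})$ (write $\caH_{\omega}\cong\caL_{L}\otimes\caK$ with $\caM_{L}=\caB(\caL_{L})\otimes\unit$, so $\caM_{R}\subseteq\caM_{L}'=\unit\otimes\caB(\caK)$, then use that $\caM_{L}$ and $\caM_{R}$ generate everything to force $\caM_{R}=\unit\otimes\caB(\caK)$). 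Let $\rho_{\omega,\vp}$ be $\pi_{\omega}\vert_{\caA_{\vp}}$ followed by the $*$-isomorphism $\caM_{\vp}\to\caB(\caL_{\vp})$; then $\rho_{\omega,\vp}$ is irreducible and quasi-equivalent to $\pi_{\omega_{\vp}}$ (indeed $\pi_{\omega}\vert_{\caA_{\vp}}\sim_{q.e.}\pi_{\omega_{\vp}}$, and a $*$-isomorphism of the generated von Neumann algebras preserves the quasi-equivalence class). Because $\tau(g)$ preserves $\caA_{L}$ and $\caA_{R}$, the automorphism $\Ad(U_{\omega}(g))$ preserves $\caM_{L}$ and $\caM_{R}$, hence factorises as $\Ad(u_{L}(g))\otimes\Ad(u_{R}(g))$ for suitable unitaries $u_{\vp}(g)$ on $\caL_{\vp}$; a direct computation then gives $\rho_{\omega,\vp}\circ\tau_{\vp}(g)=\Ad(u_{\vp}(g))\circ\rho_{\omega,\vp}$, so that $(\caL_{\vp},\rho_{\omega,\vp},u_{\vp})$ satisfies the hypotheses of Proposition \ref{unieq}, and the $2$-cocycle $\sigma_{\vp}$ of $u_{\vp}$ represents $c_{\omega,\vp}$.

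The conclusion is then a one-line cocycle computation. Since $U_{\omega}(g)$ and $u_{L}(g)\otimes u_{R}(g)$ implement the same automorphism of $\caB(\caH_{\omega})$ and $\pi_{\omega}$ is irreducible, $U_{\omega}(g)=\lambda(g)\,u_{L}(g)\otimes u_{R}(g)$ for some $\lambda\colon G\to\bbT$ with $\lambda(e)=1$. Substituting into $U_{\omega}(g)U_{\omega}(h)=U_{\omega}(gh)$ and using $u_{\vp}(g)u_{\vp}(h)=\sigma_{\vp}(g,h)u_{\vp}(gh)$ yields $\sigma_{L}(g,h)\sigma_{R}(g,h)=\lambda(gh)\lambda(g)^{-1}\lambda(h)^{-1}$ for all $g,h\in G$; thus $\sigma_{L}\sigma_{R}=\sigma_{b}$ with $b(g):=\lambda(g)^{-1}$, so $\sigma_{L}\sigma_{R}\in B^{2}(G,\bbT)$. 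Hence $c_{\omega,L}\,c_{\omega,R}=[\sigma_{L}\sigma_{R}]_{H^{2}(G,\bbT)}=1$ in $H^{2}(G,\bbT)$, i.e.\ $c_{\omega,R}=c_{\omega,L}^{-1}$.

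I expect the genuinely delicate step to be the second one: making rigorous that the two type I factors $\pi_{\omega}(\caA_{L})''$ and $\pi_{\omega}(\caA_{R})''$ are mutual commutants, so that $\caH_{\omega}$ truly factorises and $\Ad(U_{\omega}(g))$ can be split into a tensor product of inner automorphisms, and then verifying that the triple $(\caL_{\vp},\rho_{\omega,\vp},u_{\vp})$ so produced meets the precise requirements of Proposition \ref{unieq}. Once that bookkeeping is done, recognising the product cocycle $\sigma_{L}\sigma_{R}$ as a coboundary is immediate from the fact that $U_{\omega}$ is a genuine representation.
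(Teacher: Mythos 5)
Your proposal is correct and follows essentially the same route as the paper's own proof: factorize the GNS space of $\omega$ as $\caL_{L}\otimes\caL_{R}$ using the split property and the structure theorem for commuting type I factors, note that the genuine unitary representation $U_{\omega}$ implementing $\tau$ on $\caH_{\omega}$ differs from $u_{L}\otimes u_{R}$ by a phase $b:G\to\bbT$, and read off that $\sigma_{L}\sigma_{R}=\sigma_{b}$ is a coboundary. The only cosmetic difference is that the paper invokes Proposition~\ref{unieq} to produce $u_{L},u_{R}$ abstractly and then matches them against $U_{\omega}$, whereas you obtain $u_{L},u_{R}$ directly by factorizing $\Ad(U_{\omega}(g))$; the two are interchangeable.
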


\begin{proof}
Let $(\caH,\pi,\Omega)$ be the GNS triple of $\omega$.
As $\omega$ satisfies the split property, 
there are
Hilbert spaces $\caH_{L},\caH_{R}$ and a unitary
$W:\caH\to\caH_{L}\otimes \caH_{R}$
such that 
\begin{align}\label{mmd}
W\pi\lmk \caA_{R}\rmk''W^{*}=\bbC\unit_{\caH_{L}}\otimes \caB(\caH_{R}),\quad
W\pi\lmk \caA_{R}\rmk'W^{*}=\caB(\caH_{L})\otimes\bbC\unit_{\caH_{R}}.
\end{align}
(See Theorem 1.31 V \cite{takesaki}.)
From (\ref{mmd}), $\pi\lmk \caA_{L}\rmk''\subset \pi\lmk \caA_{R}\rmk'$
and
 $\pi\lmk \caA_{L}\rmk''\vee \pi\lmk \caA_{R}\rmk''=\caB(\caH)$,
 we obtain
 \begin{align}
 W\pi\lmk \caA_{L}\rmk''W^{*}=\caB(\caH_{L})\otimes\bbC\unit_{\caH_{R}}.
 \end{align}
Hence we obtain irreducible representations $(\caH_{L},\pi_{L})$
and $(\caH_{R},\pi_{R})$ of $\caA_{L}$, $\caA_{R}$
such that 
\begin{align}
&W \pi\lmk
a\otimes b
\rmk W^{*}
=\pi_{L}(a)\otimes\pi_{R}(b)
,\quad a\in\caA_{L},\quad b\in\caA_{R}.
\end{align}
The triple $(\caH_{L}\otimes \caH_{R}, \pi_{L}\otimes\pi_{R}, W\Omega)$
is a GNS triple of $\omega$.
Therefore, $\omega\vert_{\caA_{R}}$ is 
$\pi_{R}$-normal.
As $\pi_{R}(\caA_{R})''$ is a factor,
$\pi_{R}$ and the GNS representation of $\omega\vert_{\caA_{R}}$
are quasi-equivalent.
Similarly, $\pi_{L}$ and the GNS representation of $\omega\vert_{\caA_{L}}$
are quasi-equivalent.

By the $\tau$-invariance of $\omega$,
there is a unitary representation $V$ of $G$ on $\caH_{L}\otimes \caH_{R}$
given by
\begin{align}
V(g)\lmk \pi_{L}\otimes\pi_{R}\rmk (a)W\Omega
=\lmk \pi_{L}\otimes\pi_{R}\rmk \lmk \tau(g)\lmk a\rmk\rmk W\Omega,\quad
g\in G,\quad a\in\caA.
\end{align}
On the other hand, by Proposition \ref{unieq}, there are
projective unitary representations $u_{{L}}$, $u_{{R}}$ of $G$ 
on $\caH_{L}$, $\caH_{R}$ such that
\begin{align}
\pi_{L}\circ \tau_{L}(g)\lmk a\rmk
=\Ad\lmk u_{L}(g)\rmk\circ \pi_{L}\lmk a\rmk,\quad 
\pi_{R}\circ \tau_{R}(g)\lmk b\rmk
=\Ad\lmk u_{R}(g)\rmk\circ \pi_{R}\lmk b\rmk,
\end{align}
for all  $a\in\caA_{L}$,  $b\in\caA_{R}$ and $g\in G$.
Note that 
\begin{align}
\Ad\lmk V(g)\rmk\circ \lmk \pi_{L}\otimes\pi_{R}\rmk(x)
=\lmk \pi_{L}\otimes\pi_{R}\rmk\circ \tau(g)(x)
=\Ad \lmk  u_{L}(g)\otimes  u_{R}(g)\rmk \circ \lmk \pi_{L}\otimes\pi_{R}\rmk(x),
\end{align}
for all $x\in\caA$.
As $\lmk \pi_{L}\otimes\pi_{R}\rmk(\caA)''=\caB(\caH_{L}\otimes \caH_{R})$,
this means that there is a map $b:G\to\bbT$
such that 
\begin{align}\label{uuv}
u_{L}(g)\otimes  u_{R}(g)=b(g) V(g),\quad g\in G.
\end{align}
Let $\sigma_{L},\sigma_{R}\in\bbZ^{2}(G,\bbT)$
be $2$-cocycles of $u_{L}$, $u_{R}$ respectively.
From (\ref{uuv}),
we obtain
\begin{align}
\sigma_{L}\sigma_{R}=\sigma_{b}
\end{align}
 (Here $\sigma_{b}$ is defined by (\ref{bound}).)
 This means
 \begin{align}
 c_{\omega,R}=[\sigma_{R}]_{H^{2}(G,\bbT)}
 =[\sigma_{L}^{-1}]_{H^{2}(G,\bbT)}
 =c_{\omega,L}^{-1}.
 \end{align}

\end{proof}
\section{Twisted $C^{*}$-dynamical system}\label{crossedproduct}
In this section we briefly recall basic facts about twisted $C^{*}$-crossed product. 
Throughout this section, let $\Gamma$ be an infinite subset of $\bbZ$, and
 $\sigma\in Z^{2}(G,\bbT)$. 
 The quadruple $(G, \caA_{\Gamma}, \tau_{\Gamma},\sigma)$ 
 is a twisted $C^{*}$-dynamical system which we denote by $\Sigma_{\Gamma}^{(\sigma)}$.
 (This is a simple version of \cite{bedos}.)

A covariant representation of $\Sigma_{\Gamma}^{(\sigma)}$ is a triple
$(\caH,\pi, u)$ where $\pi $ is a $*$-representation of the $C^{*}$-algebra $\caA_{\Gamma}$ 
on a Hilbert space 
$\caH$
and $u$ is a projective unitary representation of $G$
with $2$-cocycle $\sigma$ on  $\caH$
such that 
\begin{align}
u(g) \pi(a) u(g)^{*}= \pi\lmk\tau_{\Gamma}(g)(a)\rmk,\quad
a\in \caA_{\Gamma},\quad g\in G.
\end{align}
In this paper, we say the covariant representation $(\caH,\pi, u)$ is irreducible if
$\pi $ is an irreducible representation of $\caA_{\Gamma}$.
Note that for a quadruple$(\caL_{\omega,{\vp}}, \rho_{\omega,{\vp}}, u_{\omega,{\vp}},\sigma_{\omega,{\vp}})$ associated to $(\omega\vert_{\caA_{{\vp}}},\tau_{\vp})$
with $\omega\in SPG(\caA)$
(Definition \ref{pao}), $(\caL_{\omega,{\vp}}, \rho_{\omega,{\vp}}, u_{\omega,{\vp}})$
is an irreducible covariant representation of
$\Sigma_{\Gamma_{\vp}}^{(\sigma_{\omega,{\vp}})}$.

Let $C(G,\caA_{\Gamma})$ be 
the linear space of $\caA_{\Gamma}$-valued functions 
on $G$. 
We equip $C(G,\caA_{\Gamma})$ with a product and $*$-operation as follows:
\begin{align}
&f_{1}*f_{2}(h)
:=\sum_{g\in G}\sigma(g,g^{-1}h)\cdot f_{1}(g)\cdot \tg(g)\lmk f_{2}(g^{-1}h)\rmk,\quad h\in G,
\label{multi}\\
&f^{*}(h):=
\overline{\sigma(h^{-1}, h)}\tg\lmk h\rmk \lmk
f(h^{-1})^{*}
\rmk,\quad h\in G,
\end{align}
for $f_{1},f_{2},f\in C(G,\caA_{\Gamma})$.
The linear space $C(G,\caA_{\Gamma})$ which is a $*$-algebra
with these operations is denoted by $C(\Sigma_{\Gamma}^{(\sigma)})$.
We will omit the symbol $*$ for the multiplication (\ref{multi}).

For a covariant representation $(\caH,\pi, u)$ of 
$\Sigma_{\Gamma}^{(\sigma)}$, we may introduce a $*$-representation 
$(\caH,\pi\times u)$ of
$C(\Sigma_{\Gamma}^{(\sigma)})$ by
\begin{align}
\lmk \pi\times u\rmk (f)
:=\sum_{g\in G}\pi\lmk f(g)\rmk u(g),\quad f\in C(\Sigma_{\Gamma}^{(\sigma)}).
\end{align}
The full twisted crossed product of $\Sigma_{\Gamma}^{(\sigma)}$, denoted $C^{*}(\Sigma_{\Gamma}^{(\sigma)})$
is the completion of $C(\Sigma_{\Gamma}^{(\sigma)})$ with respect to the norm
\begin{align}
\lV f\rV_{u}:=
\sup\left\{
\lV\lmk \pi\times u\rmk (f)\rV
\mid
(\pi,u) : \text{covariant representation}
\right\},\quad f\in C(\Sigma_{\Gamma}^{(\sigma)}).
\end{align}
From any representation $(\caH,\pi)$ of $\caA_{\Gamma}$,
we can define a covariant representation $(\caH\otimes l^{2}(G)\simeq l^{2}(G,\caH), \tilde\pi, \tilde u_{\pi})$ of $\Sigma_{\Gamma}^{(\sigma)}$
by
\begin{align}
\lmk \tilde \pi(a) \xi\rmk(g):=\pi\lmk \tg(g^{-1})(a)\rmk\xi(g),\quad
a\in\caA_{\Gamma},\quad \xi\in l^{2}(G,\caH),\quad g\in G,
\end{align}
and $\tilde u_{\pi}=\unit_{\caH}\otimes u^{\sigma}_{r}$.
Here, $u^{\sigma}_{r}$ is a projective unitary representation 
with $2$-cocycle $\sigma$ on
$l^{2}(G)$ defined by
\begin{align}
\lmk u^{\sigma}_{r}(g)\xi\rmk (h)
=\sigma(g, g^{-1}h)\xi(g^{-1}h),\quad g,h\in G,\quad \xi\in l^{2}(G).
\end{align}
Note that $\pi$ is faithful because $\caA_{\Gamma}$ is simple. Therefore, the representation
$ \tilde\pi\times  \tilde u_{\pi}$ of $C(\Sigma_{\Gamma}^{(\sigma)})$ given by
 $(\caH\otimes l^{2}(G)\simeq l^{2}(G,\caH), \tilde\pi, \tilde u_{\pi})$ 
is faithful.
 We define a $C^{*}$-norm $\lV\cdot\rV_{r}$ on $C(\Sigma_{\Gamma}^{(\sigma)})$ by
 \begin{align}
 \lV f\rV_{r}:=
 \lV
 \tilde\pi\times  \tilde u_{\pi}(f)
 \rV_{\caB(\caH\otimes l^{2}(G))},\quad f\in C(\Sigma_{\Gamma}^{(\sigma)}).
 \end{align}
 The completion $C_{r}^{*}(\Sigma_{\Gamma}^{(\sigma)})$ 
 of $C(\Sigma_{\Gamma}^{(\sigma)})$ with respect to this norm is the reduced twisted crossed product of $\Sigma_{\Gamma}^{(\sigma)}$.
 As we are considering a finite group $G$, we have
 $C(\Sigma_{\Gamma}^{(\sigma)})=C_{r}^{*}(\Sigma_{\Gamma}^{(\sigma)})=C^{*}(\Sigma_{\Gamma}^{(\sigma)})$, and
 $\lV\cdot \rV_{r}=\lV\cdot\rV_{u}$.
 
 For each $a\in \caA_{\Gamma}$, $\xi_{a}:G\ni g\mapsto \delta_{g,e}a\in \caA_{\Gamma}$ defines an element of $C^{*}(\Sigma_{\Gamma}^{(\sigma)})$. 
 The map $\xi:\caA_{\Gamma}\ni a\mapsto \xi_{a}\in C^{*}(\Sigma_{\Gamma}^{(\sigma)})$
 is a unital faithful $*$-homomorphism.
 Note that $\xi_{\unit_{\caA_{\Gamma}}}$ is the identity of $C^{*}\lmk\Sigma_{\Gamma}^{(\sigma)}\rmk$.
 Hence 
 the $C^{*}$-algebra $\caA_{\Gamma}$ can be regarded as a subalgebra
 of $C(\Sigma_{\Gamma}^{(\sigma)})=C_{r}^{*}(\Sigma_{\Gamma}^{(\sigma)})=C^{*}(\Sigma_{\Gamma}^{(\sigma)})$.
 Therefore, we simply write $a$ to denote $\xi_{a}$.
 
 From the condition (\ref{uffl}), 
 for any $g\in G$ with $g\neq e$, the automorphism $\tau_{\Gamma}(g)$ is properly outer.
  Therefore, by the argument in \cite{elliott} Theorem 3.2, $C(\Sigma_{\Gamma}^{(\sigma)})=C_{r}^{*}(\Sigma_{\Gamma}^{(\sigma)})=C^{*}(\Sigma_{\Gamma}^{(\sigma)})$ is simple.
 
 As $\caA_{\Gamma}$ is unital, we have unitaries $\lambda_{g}\in C^{*}(\Sigma_{\Gamma}^{(\sigma)})$, $g\in G$,
 defined by $G\ni h\mapsto \delta_{g,h}\unit_{\caA_{\Gamma}}\in \caA_{\Gamma}$ such that
 \begin{align}\label{ldef}
 &\lambda_{g}\lambda_{h}=\sigma(g,h)\lambda_{gh},\quad g,h\in G,\nonumber\\
 &\lambda_{g}a \lambda_{g}^{*}=\tau_{\Gamma}(g)\lmk a\rmk,\quad a\in \caA_{\Gamma},\; g\in G.
 \end{align}
 Note that $\lambda_{e}=\xi_{\unit_{\caA_{\Gamma}}}$ is the identity of $C^{*}\lmk\Sigma_{\Gamma}^{(\sigma)}\rmk$.


We set
\begin{align}
C^{*}\lmk\Sigma_{\Gamma}^{(\sigma)}\rmk^{G}
:=\left\{
f\in C^{*}\lmk\Sigma_{\Gamma}^{(\sigma)}\rmk\mid
\Ad\lmk\lambda_{g}\rmk(f)=f,\quad g\in G
\right\}.
\end{align}

Let $(\caH,\pi, u)$ be an irreducible covariant representation 
of
$\Sigma_{\Gamma}^{(\sigma)}$.
The projective unitary representation $u$ has an irreducible decomposition given by some Hilbert spaces $\{\caK_{\gamma}\mid \gamma\in \caP_{\sigma}\}$
(Lemma \ref{pd} and Notation \ref{ddcom}). Namely we have
\begin{align}\label{bvp}
u(g)=\bigoplus_{\alpha\in \pgs}
\uu_{\alpha}(g)\otimes \unit_{\caK_{\alpha}},\quad g\in G,\quad \text{and}\quad
u(G)'=
\bigoplus_{\alpha\in \pgs}
\unit_{\caH_{\alpha}}\otimes \caB\lmk {\caK_{\alpha}}\rmk.
\end{align}
Note that 
\begin{align}\label{rg}
\lmk \pi\times u\rmk(\lambda_{g})=u(g), \quad g\in G .
\end{align}
From this we have
\begin{align}
\lmk \pi\times u\rmk
\lmk C^{*}\lmk\Sigma_{\Gamma}^{(\sigma)}\rmk^{G}\rmk
\subset u(G)'=
\bigoplus_{\alpha\in \pgs}
\bbC\unit_{\caH_{\alpha}}\otimes \caB\lmk {\caK_{\alpha}}\rmk.
\end{align}
%

The following proposition is the immediate consequence of Theorem \ref{zbdip}.
\begin{prop}\label{zbdi}
Let  $(\caH,\pi,u)$ be an irreducible covariant representation of 
$\Sigma_{\Gamma}^{(\sigma)}$.
Then $u$ contains
all elements of $\pgs$ with infinite multiplicity.
\end{prop}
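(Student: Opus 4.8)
The plan is to observe that the statement is an immediate application of Theorem \ref{zbdip}. Given an irreducible covariant representation $(\caH,\pi,u)$ of $\Sigma_{\Gamma}^{(\sigma)}$, I would simply set $\caL:=\caH$ and $\rho:=\pi$ and consider the quadruple $(\caH,\pi,u,\sigma)$, then check that it satisfies the three hypotheses of Theorem \ref{zbdip}. Condition (i) holds because $\pi$ is, by the definition of a covariant representation, a $*$-representation of $\caA_{\Gamma}$ on $\caH$; condition (ii) holds because $u$ is, by that same definition, a projective unitary representation of $G$ with $2$-cocycle $\sigma$ on $\caH$.

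For condition (iii), I would rewrite the defining covariance identity
\begin{align}
u(g)\pi(a)u(g)^{*}=\pi\lmk \tau_{\Gamma}(g)(a)\rmk,\quad a\in\caA_{\Gamma},\quad g\in G,
\end{align}
in the equivalent operator form $\Ad\lmk u(g)\rmk\circ \pi=\pi\circ\tau_{\Gamma}(g)$ for all $g\in G$, which is precisely condition (iii) of Theorem \ref{zbdip}. Applying that theorem then yields that $u$ contains every element of $\pgs$ with infinite multiplicity, which is the assertion of the proposition.

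There is no genuine obstacle in this step: all the real work is contained in Theorem \ref{zbdip}, and, through it, in Lemmas \ref{simple}, \ref{abu} and \ref{smb}. I would note in passing that the irreducibility of $\pi$ is not actually used in the argument — the conclusion holds for an arbitrary covariant representation of $\Sigma_{\Gamma}^{(\sigma)}$ — but since the proposition is stated for the irreducible case I would leave the hypothesis as is, since that is the case needed later when applying it to the quadruples $(\caL_{\omega,{\vp}}, \rho_{\omega,{\vp}}, u_{\omega,{\vp}},\sigma_{\omega,{\vp}})$ of Definition \ref{pao}.
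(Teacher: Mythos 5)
Your proof is correct and is exactly the paper's approach: the paper states only that the proposition is ``the immediate consequence of Theorem \ref{zbdip},'' and your verification of conditions (i)--(iii) (with $\caL=\caH$, $\rho=\pi$, and the covariance identity rewritten as $\Ad(u(g))\circ\pi=\pi\circ\tau_\Gamma(g)$) is precisely what that remark leaves implicit. Your aside that irreducibility of $\pi$ is not used is also correct; the hypothesis is kept only because that is the setting in which the proposition is later invoked.
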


\section{Homogeneity}\label{homogeneity}
Throughout this section we fix $\Gamma=\Gamma_{L},\Gamma_{R}$, $\sigma\in Z^{2}(G,\bbT)$, and
$\alpha\in \pgs$.
We use the following notation.
\begin{notation}\label{nagai}Let $(\caH,\pi,u)$ be an irreducible covariant representation of 
$\Sigma_{\Gamma}^{(\sigma)}$ with
an irreducible decomposition of $u$ given by a set of Hilbert spaces 
$\{\caK_{\gamma}\mid \gamma \in  \pgs\}$. We use the symbol $\hat\pi$ to denote the irreducible representation 
\begin{align}\label{phat}
\hat\pi=\id_{\caB(\caH_{\alpha})}\otimes \lmk\pi\times u\rmk
\end{align}
of
$\caB(\caH_{\alpha})\otimes C^{*}(\Sigma_{\Gamma}^{(\sigma)})$ on $\caH_{\alpha}\otimes \caH$.

For a unit vector $\xi\in \caK_{\alpha}$,
we may define a state $\hat\varphii_{\xi}$ on $\caB(\caH_{\alpha})\otimes C^{*}(\Sigma_{\Gamma}^{(\sigma)})$
by
\begin{align}\label{nekosan}
\hat\varphii_{\xi}(x):=
\braket{\tilde\xi}
{\hat\pi
\lmk x\rmk \tilde\xi},\quad x\in \caB(\caH_{\alpha})\otimes C^{*}(\Sigma_{\Gamma}^{(\sigma)}).
\end{align}
Here, $\tilde \xi$ is an element of $ \caH_{\alpha}\otimes \caH$,
\begin{align}\label{kaeru}
\tilde\xi:=\Omega_{\alpha}\otimes \xi\in \caH_{\alpha}\otimes \caH_{\alpha}\otimes \caK_{\alpha}
\hookrightarrow 
 \caH_{\alpha}\otimes \caH,
\end{align}
regarding $ \caH_{\alpha}\otimes \caH_{\alpha}\otimes \caK_{\alpha}$ as a subspace of $ \caH_{\alpha}\otimes \caH$. (See Notation \ref{ddcom}.)
Recall that $\Omega_{\alpha}$ is defined in  (\ref{oa}).
We call this $\hat\varphii_{\xi}$ a state on $\caB(\caH_{\alpha})\otimes C^{*}(\Sigma_{\Gamma}^{(\sigma)})$
given by $(\caH,\pi,u,\xi)$.
By the irreducibility of $\pi$, $\hat\pi$ is irreducible and $\hat \varphii_{\xi}$ is a pure state on $\caB(\caH_{\alpha})\otimes C^{*}(\Sigma_{\Gamma}^{(\sigma)})$.
Note that $(\caH_{\alpha}\otimes \caH,\hat\pi,\tilde \xi)$
is a GNS triple of $\hat\varphii_{\xi}$.

\end{notation}

The goal of this section is to prove the following Proposition.

\begin{prop}\label{lem6}
 Let $(\caH_{i},\pi_{i},u_{i})$ with $i=0,1$ be
irreducible  covariant representations of $\Sigma_{\Gamma}^{(\sigma)}$
with irreducible decomposition of $u_{i}$ given by a set of Hilbert spaces 
$\{\caK_{\gamma,i}\mid \gamma \in  \pgs\}$.
Let 
$\xi_{i}\in\caK_{\alpha,i}$ be unit vectors in $\caK_{\alpha,i}$ for $i=0,1$.
(Recall Proposition \ref{zbdi} for existence of such vectors.)
Let $\hat\varphii_{{\xi}_{i}}$ be a state on $\caB(\caH_{\alpha})\otimes C^{*}(\Sigma_{\Gamma}^{(\sigma)})$
given by $(\caH_{i},\pi_{i},u_{i},\xi_{i})$, for each $i=0,1$.
Let
$\varphii_{i}$ be the restriction of $\hat\varphii_{{\xi}_{i}}$
onto $\caA_{\Gamma}$.
 Then there exists 
 a norm-continuous path $w:[0,\infty)\to \caU(\caA_{\Gamma}^{G})$
 with $w(0)=\unit$,
 such that  
 \begin{enumerate}
 \item
for each $a\in\caA_{\Gamma}$, the limit 
 \begin{align}
 \lim_{t\to\infty}\Ad\lmk w(t)\rmk(a)=:\Xi_{\Gamma}(a)
 \end{align}
exists and defines 
 an automorphism $\Xi_{\Gamma}$ on $\caA_{\Gamma}$, and
 \item the automorphism $\Xi_{\Gamma}$ in {\it 1.}
 satisfies $\varphii_{1}=\varphii_{0}\circ\Xi_{\Gamma}$.
 \end{enumerate}
\end{prop}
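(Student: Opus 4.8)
The plan is to realise $\hat\varphi_{\xi_0}$ and $\hat\varphi_{\xi_1}$ as pure states on the separable, simple, unital $C^{*}$-algebra $\caB(\caH_{\alpha})\otimes C^{*}(\Sigma_{\Gamma}^{(\sigma)})$ and then to run the homogeneity machinery of \cite{powers,brat,fkk,kos} in a form that keeps all the conjugating unitaries inside the fixed-point algebra $\caA_{\Gamma}^{G}$. First I would record the GNS data. By Notation \ref{nagai}, $(\caH_{\alpha}\otimes\caH_{i},\hat\pi_{i},\tilde\xi_{i})$ is a GNS triple of the pure state $\hat\varphi_{\xi_{i}}$; since $\hat\pi_{i}(a)=\unit_{\caH_{\alpha}}\otimes\pi_{i}(a)$ for $a\in\caA_{\Gamma}$, the restriction $\varphi_{i}=\hat\varphi_{\xi_{i}}\vert_{\caA_{\Gamma}}$ is the normal state $a\mapsto\Tr\lmk\rho_{i}\,\pi_{i}(a)\rmk$ with $\rho_{i}=\tfrac{1}{n_{\alpha}}\lmk\unit_{\caH_{\alpha}}\otimes\ketbra{\xi_{i}}{\xi_{i}}\rmk$, a density matrix of rank $n_{\alpha}$ supported on the $\alpha$-component $\caH_{\alpha}\otimes\caK_{\alpha,i}\subset\caH_{i}$. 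Because $\pi_{i}$ is irreducible and $\hat\pi_{i}\circ\tau_{\Gamma}(g)=\Ad\lmk\unit_{\caH_{\alpha}}\otimes u_{i}(g)\rmk\circ\hat\pi_{i}$, one checks that $\hat\pi_{i}(\caA_{\Gamma})''=\unit_{\caH_{\alpha}}\otimes\caB(\caH_{i})$ while $\hat\pi_{i}(\caA_{\Gamma}^{G})''=\unit_{\caH_{\alpha}}\otimes u_{i}(G)'=\unit_{\caH_{\alpha}}\otimes\bigoplus_{\gamma\in\pgs}\unit_{\caH_{\gamma}}\otimes\caB(\caK_{\gamma,i})$, and by Proposition \ref{zbdi} every multiplicity space $\caK_{\gamma,i}$ occurring here is infinite dimensional. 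In general $\varphi_{0}$ and $\varphi_{1}$ are not quasi-equivalent, so the homogeneity machinery really is needed and an inner automorphism will not suffice.

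Next I would carry out the iterative construction. Write $\Gamma$ as an increasing union of finite intervals $\Lambda_{1}\subset\Lambda_{2}\subset\cdots$, so that $\caA_{\Gamma}=\overline{\bigcup_{n}\caA_{\Lambda_{n}}}$. Following the scheme of \cite{powers,brat,fkk,kos}, one constructs unitaries $w_{n}\in\caU\lmk\caA_{\Lambda_{k(n)}}^{G}\rmk$ for suitable $k(n)$, with $w_{0}=\unit$, such that $\varphi_{0}\circ\Ad(w_{n})$ and $\varphi_{1}$ agree on $\caA_{\Lambda_{n}}$ up to $2^{-n}$, and with $\lV w_{n}-w_{n-1}\rV$ chosen small enough (depending on $\Lambda_{n-1}$ and the preceding tolerances) that $\Ad(w_{n})$ converges pointwise on $\caA_{\Gamma}$ to an automorphism $\Xi_{\Gamma}$ of $\caA_{\Gamma}$ satisfying $\varphi_{1}=\varphi_{0}\circ\Xi_{\Gamma}$. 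Since consecutive $w_{n}$ are close, $w_{n}w_{n-1}^{*}=\exp(ih_{n})$ for a small self-adjoint $h_{n}\in\caA_{\Gamma}^{G}$, and concatenating the short paths $t\mapsto\exp(ith_{n})\,w_{n-1}$ gives the required norm-continuous $w\colon[0,\infty)\to\caU(\caA_{\Gamma}^{G})$ with $w(0)=\unit$. Note that $\Xi_{\Gamma}$ automatically commutes with $\tau_{\Gamma}$, because each $w_{n}$ is $G$-invariant, so this also yields the symmetry preservation needed in the main theorem.

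The step I expect to be the main obstacle is the inductive choice of $w_{n}$ \emph{inside} $\caA_{\Gamma}^{G}$, rather than inside $\caA_{\Gamma}$ or inside the larger algebra $\caB(\caH_{\alpha})\otimes C^{*}(\Sigma_{\Gamma}^{(\sigma)})$ to which the machinery of \cite{kos} applies verbatim. Two ingredients should make this possible. First, by the computation above the GNS representations of $\varphi_{0}$ and $\varphi_{1}$, restricted to $\caA_{\Gamma}^{G}$, have commutant $\bigoplus_{\gamma}\caB(\caH_{\gamma})\otimes\unit_{\caK_{\gamma,i}}$ with all multiplicity spaces infinite dimensional (Proposition \ref{zbdi}); this provides precisely the degrees of freedom needed to push $\tilde\xi_{0}$ towards $\tilde\xi_{1}$ on ever larger finite regions while staying within the fixed-point picture, so that at each stage the approximate matching can be realised by a $G$-invariant unitary rather than an arbitrary one. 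Second, these unitaries can be taken supported on finite blocks: by the assumption $U(g)\notin\bbC\unit_{\bbC^{d}}$ for $g\neq e$ and Lemmas \ref{simple}, \ref{abu} and \ref{smb}, for a large enough block $\Lambda$ the representation $U^{\otimes\abs{\Lambda}}$ and its $\sigma$-twists contain every element of $\pgs$, so the finite-dimensional fixed-point algebra $\caA_{\Lambda}^{G}$ already contains the matrix units and partial isometries implementing the transformation demanded at that stage, the cocycle bookkeeping being carried by the irreducible decomposition of $u_{i}$. Assembling these local moves with enough uniformity for $\Ad(w_n)$ to converge, and verifying that the resulting automorphism of $\caA_{\Gamma}$ genuinely intertwines $\varphi_{0}$ and $\varphi_{1}$, is the technical heart of the proof; the remaining points are routine.
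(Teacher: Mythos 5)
Your overall strategy matches the paper's: realise the two states on $\caB(\caH_\alpha)\otimes C^*(\Sigma_\Gamma^{(\sigma)})$ and run the Powers/FKK/KOS homogeneity machinery while keeping the conjugating unitaries in $\caA_\Gamma^G$, using Proposition \ref{zbdi} (infinite multiplicity) and Lemmas \ref{simple}--\ref{smb} (block absorption of $\pgs$) to enable the $G$-equivariant version. But the way you propose to assemble the limit has two genuine gaps. First, the criterion ``choose $\lV w_n-w_{n-1}\rV$ small enough that $\Ad(w_n)$ converges pointwise'' is self-defeating: summable differences make $w_n$ norm-convergent, hence $\Xi_\Gamma=\Ad(\lim_n w_n)$ inner and $\varphi_0\sim_{\rm q.e.}\varphi_1$, which you yourself note fails in general. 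What the iteration actually needs is that each correction approximately commute with a finite set that grows with $n$, tracking both the local matrix units $\caS_{\Lambda_k}$ \emph{and} their images under the previously built unitaries; this is exactly what the sets $\caF_k$ in (\ref{fe})--(\ref{fo}) and condition 3, (\ref{vapd}), of $[P_n]$ encode, and the corrections themselves stay bounded away from $\unit$. Second, a one-sided iteration ``$\varphi_0\circ\Ad(w_n)\to\varphi_1$ on $\caA_{\Lambda_n}$'' only yields an endomorphism in the limit. The paper runs an Elliott-type back-and-forth, alternating $v_{2j}$ and $v_{2j-1}$, to build two endomorphisms $\gamma_0$, $\gamma_1$ whose surjectivity is established explicitly (via the elements $a_x$, $b_x$), and then sets $\Xi_\Gamma=\gamma_0\circ\gamma_1^{-1}$ with $w(t)=z(t)y(t)^*$; your sketch never addresses surjectivity.

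You also leave the step you rightly single out as ``the main obstacle'' at the level of heuristics. The paper's precise device (Lemma \ref{lem2}) is to adjoin the element $R^{(\alpha)}\in\caB(\caH_\alpha)\otimes C^*(\Sigma_\Gamma^{(\sigma)})$, which $\hat\pi_i$ sends to the projection $\ketbra{\Omega_\alpha}{\Omega_\alpha}\otimes\unit_{\caK_{\alpha,i}}$, to the test set of Lemma \ref{kos1312}. Since $\hat\varphii_{\xi_i}(R^{(\alpha)})=1$, this forces the FKK approximating vector $\zeta$ to lie close to one of the form $\Omega_\alpha\otimes\eta$ with $\eta\in\caK_{\alpha,1}$, and Lemma \ref{lem11iii} then reduces the transitivity step to the irreducible representation $\pi_{\alpha,1}$ of $\caA_\Gamma^G$ on $\caK_{\alpha,1}$. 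This is precisely where extending from $\caA_\Gamma$ to $\caB(\caH_\alpha)\otimes C^*(\Sigma_\Gamma^{(\sigma)})$ earns its keep; without such a device the approximating vector need not be compatible with any $G$-invariant unitary. A parallel mechanism (averaging over $G$ via the $\lambda_g$ in (\ref{hbardef}), then Kaplansky density for $\tilde\pi(\caA_{\Gamma\setminus\Lambda}^G)$ through (\ref{utd})) is needed in Lemma \ref{lem4} to localize the corrections inside $\caA_{\Gamma\setminus\Lambda}^G$. Your appeal to Lemmas \ref{simple}--\ref{smb} and $\caA_\Lambda^G$ ``already containing the needed partial isometries'' gestures at this but does not give a workable replacement for the $R^{(\alpha)}$ trick.
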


\begin{rem}
Basically, what we would like to do is to connect some $\pi_{0}$-normal state
$\varphi_{0}$ and some $\pi_{1}$-normal state
$\varphi_{1}$ via some $\Xi_{\Gamma}\in \ainn^{G} (A_{\Gamma})$.
Without symmetry, $\varphi_{0}$ and 
$\varphi_{1}$ can be taken to be pure states.
When the symmetry comes into the game, to guarantee that $\Xi_{\Gamma}$
commutes with $\tau_{\Gamma}(g)$,
we would like to assume that $\varphi_{0}$ and $\varphi_{1}$ 
are $\tau_{\Gamma}$-invariant.
If $\sigma$ is trivial, there is a $u_{i}$-invariant non-zero vector that we may 
find such pure $\tau_{\Gamma}$-invariant states $\varphi_{0}$ and 
$\varphi_{1}$.
But if the cohomology class of $\sigma$ is not trivial,
$u_{i}$ does not have a non-zero invariant vector.
However, there is still a rank $n_{\alpha}$ $u_{i}$-invariant
density matrix.
That is the reason why we consider $\caB(\caH_{\alpha})\otimes C^{*}(\Sigma_{\Gamma}^{(\sigma)})$.
Note that the density matrix of $\varphi_{i}$
is a rank $n_{\alpha}$ operator which commutes with $u_{i}$.
\end{rem}

For the proof of Proposition \ref{lem6}, we use the machinery used in \cite{fkk} and \cite{kos}.
(See Appendix \ref{kosfkk}.)
However, as we would like to have a path in the fixed point algebra $\caA_{\Gamma}^{G}$,
we need additional arguments.
For that purpose, the following Lemma plays an important role.

\begin{lem}\label{lem11iii}
Let $\Gamma_{0}$ be an infinite subset of $\bbZ$.
Let $(\caH,\pi,u)$ be an irreducible covariant representation of 
$\Sigma_{{\Gamma_0}}^{(\sigma)}$ with
an irreducible decomposition of $u$ given by a set of Hilbert spaces 
$\{\caK_{\gamma}\mid \gamma \in  \pgs\}$.
Then there exist irreducible $*$-representations $(\caK_{\gamma},\pi_{\gamma} )$, $\gamma\in \caP_{\sigma}$
of $\caA_{{\Gamma_0}}^{G}$
such that 
\begin{align}\label{eq53}
\pi(a)=\bigoplus_{\gamma\in \pgs} \unit_{\caH_{\gamma}}\otimes \pi_{\gamma}(a),\quad
a\in \caA^{G}_{{\Gamma_0}}.
\end{align}
Furthermore,
we have
\begin{align}\label{ttg}
\pi\lmk\caA^{G}_{{\Gamma_0}}\rmk''=\bigoplus_{\gamma\in \pgs}\unit_{\caH_{\gamma}}\otimes
B(\caK_{\gamma}).
\end{align}
\end{lem}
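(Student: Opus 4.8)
The plan is to analyze how the representation $\pi$ of the full algebra $\caA_{\Gamma_0}$ decomposes when restricted to the fixed-point subalgebra $\caA_{\Gamma_0}^G$, using the interplay with the projective representation $u$ and the covariance relation $u(g)\pi(a)u(g)^* = \pi(\tau_{\Gamma_0}(g)(a))$.

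First I would record the structural consequences of irreducibility and covariance. Since $(\caH,\pi,u)$ is irreducible, $\pi(\caA_{\Gamma_0})'' = \caB(\caH)$. The covariance relation says that for $a \in \caA_{\Gamma_0}^G$ we have $u(g)\pi(a)u(g)^* = \pi(a)$, so $\pi(\caA_{\Gamma_0}^G) \subset u(G)'$. By Lemma \ref{pd} applied to $u$ with its given irreducible decomposition $\{\caK_\gamma\}$, we have $\caH = \bigoplus_{\gamma \in \pgs} \caH_\gamma \otimes \caK_\gamma$ and $u(G)' = \bigoplus_{\gamma} \unit_{\caH_\gamma} \otimes \caB(\caK_\gamma)$. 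Hence $\pi|_{\caA_{\Gamma_0}^G}$ lands in this block-diagonal algebra, which immediately forces the form $\pi(a) = \bigoplus_\gamma \unit_{\caH_\gamma} \otimes \pi_\gamma(a)$ for some $*$-representations $\pi_\gamma$ of $\caA_{\Gamma_0}^G$ on $\caK_\gamma$. This gives (\ref{eq53}), once we verify that nothing more refined than the identity acts on the $\caH_\gamma$ factor — which is exactly the statement that the commutant of $u(G)$ has no further structure on those legs.

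Next I would identify $\pi(\caA_{\Gamma_0}^G)''$. The natural claim is that this von Neumann algebra equals the full block-diagonal algebra $\bigoplus_\gamma \unit_{\caH_\gamma} \otimes \caB(\caK_\gamma) = u(G)'$, and that each $\pi_\gamma$ is irreducible. The key input here should be that $\caA_{\Gamma_0}$ is generated by $\caA_{\Gamma_0}^G$ together with the "charged" sectors, and more concretely that one can recover $u(g)$ (up to scalars) from $\pi(\caA_{\Gamma_0})$ acting together with the $G$-grading. I expect the cleanest route is: the relative commutant argument. We have $\pi(\caA_{\Gamma_0}^G)' \supset u(G)$ (since $u(g)$ commutes with $\pi(a)$ for $G$-invariant $a$), so $\pi(\caA_{\Gamma_0}^G)' \supset u(G)''$. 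Conversely, by a Galois-type or averaging argument — using that for a finite group acting with all automorphisms $\tau_{\Gamma_0}(g)$, $g\ne e$, properly outer (guaranteed by (\ref{uffl}) and the discussion in Section \ref{crossedproduct}) — one shows $\pi(\caA_{\Gamma_0})''= \caB(\caH)$ is generated by $\pi(\caA_{\Gamma_0}^G)$ and $u(G)$ together, and that $\pi(\caA_{\Gamma_0}^G)' = u(G)'' \vee (\text{something})$; taking commutants then yields $\pi(\caA_{\Gamma_0}^G)'' = u(G)'$. Equivalently, since Proposition \ref{zbdi}/Theorem \ref{zbdip} tells us every $\gamma \in \pgs$ appears in $u$ with infinite multiplicity, one can run the standard argument that the fixed-point representation decomposes into $|\pgs|$ inequivalent irreducibles indexed by the sectors, each appearing with the "right" multiplicity. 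Once $\pi(\caA_{\Gamma_0}^G)'' = \bigoplus_\gamma \unit_{\caH_\gamma} \otimes \caB(\caK_\gamma)$ is established, (\ref{ttg}) follows, and taking commutants back gives that each $\pi_\gamma$ is irreducible, completing (\ref{eq53}).

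The main obstacle will be the proof that $\pi(\caA_{\Gamma_0}^G)'' = u(G)'$ exactly — i.e. that restricting to the fixed-point algebra loses precisely the $\caH_\gamma$ legs and nothing more, and that the pieces $\pi_\gamma$ land surjectively (weakly densely) on all of $\caB(\caK_\gamma)$. This is where properness of the outer action and the simplicity of the crossed product $C^*(\Sigma_{\Gamma_0}^{(\sigma)})$ should enter: one can realize $\caB(\caH) = \pi(\caA_{\Gamma_0})''$ as being generated by $\pi(\caA_{\Gamma_0}^G)$ and the unitaries $u(g)$ implementing the $G$-action, invoke that there are no nontrivial intermediate von Neumann algebras fixed by the dual action (a crossed-product Galois correspondence), and conclude. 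Alternatively, a direct computation using the matrix units $f^{(\gamma)}_{k,j}$ of $\caB(\caH_\gamma)$ together with elements of $\caA_{\Gamma_0}$ that intertwine sectors — as in the standard analysis of fixed-point algebras of UHF algebras under finite group actions — pins down the commutant explicitly. I would lean on whichever of these the paper finds most economical, but the conceptual heart is the Galois correspondence for the outer $G$-action twisted by $\sigma$.
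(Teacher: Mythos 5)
Your treatment of (\ref{eq53}) matches the paper: covariance puts $\pi(\caA_{\Gamma_0}^G)$ inside $u(G)'$, and Lemma \ref{pd} gives the block form $\pi(a)=\bigoplus_\gamma \unit_{\caH_\gamma}\otimes\pi_\gamma(a)$. For (\ref{ttg}) and the irreducibility of the $\pi_\gamma$, however, the paper's actual argument is far lighter than the Galois-correspondence / proper-outerness machinery you lean on, and you would do better to promote the remark you make in passing (``or averaging argument'') to the main mechanism. Given $x\in\caB(\caK_\gamma)$, irreducibility of $\pi$ plus the Kaplansky density theorem produce a bounded net $\{a_\lambda\}\subset\caA_{\Gamma_0}$ with $\pi(a_\lambda)\to\unit_{\caH_\gamma}\otimes x$ $\sigma$-strongly; since the target commutes with each $u(g)$, averaging yields
\[
\frac{1}{|G|}\sum_{g\in G}u(g)\pi(a_\lambda)u(g)^{*}=\pi\Bigl(\frac{1}{|G|}\sum_{g\in G}\tau_{\Gamma_0}(g)(a_\lambda)\Bigr)\longrightarrow \unit_{\caH_\gamma}\otimes x,
\]
and the argument of $\pi$ is now $G$-invariant, so the limit lies in $\pi(\caA_{\Gamma_0}^G)''$. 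That gives (\ref{ttg}) immediately, and projecting to the $\gamma$-block shows $\pi_\gamma(\caA_{\Gamma_0}^G)''=\caB(\caK_\gamma)$, i.e.\ irreducibility. Note that this uses \emph{neither} proper outerness, \emph{nor} simplicity of $C^{*}(\Sigma_{\Gamma_0}^{(\sigma)})$, \emph{nor} Proposition \ref{zbdi}. The Galois route you call the ``conceptual heart'' is not straightforwardly applicable as stated: the induced $G$-action $\Ad(u(g))$ on $\pi(\caA_{\Gamma_0})''=\caB(\caH)$ is inner, so von Neumann--level Galois correspondences for outer actions do not apply directly, and relating the $C^{*}$-level fixed-point algebra to its weak closure is exactly the work the averaging step does in one stroke. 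Likewise, invoking the infinite-multiplicity result (Proposition \ref{zbdi}) for this lemma is both unnecessary and risks circularity, since this lemma is one of the tools the paper uses to analyze $\pi|_{\caA_{\Gamma_0}^G}$ in the first place.
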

\begin{notation}\label{agrep}
We call $\left\{(\caK_{\gamma}, \pi_{\gamma})\mid\gamma\in \pgs\right\}$,
the family of representations of $\caA^{G}_{{\Gamma_0}}$ associated to $(\caH,\pi,u)$.
\end{notation}
\begin{proof}
For any $a\in\caA_{{\Gamma_0}}^{G}$, we have $\pi(a)\in u(G)'$.
Therefore, from Lemma \ref{pd},
each $\pi(a)$ with $a\in\caA_{{\Gamma_0}}^{G}$
has a form 
\begin{align}
\pi(a)=\bigoplus_{\gamma\in \pgs} \unit_{\caH_{\gamma}}\otimes \pi_{\gamma}(a),
\end{align}
with uniquely defined $\pi_{\gamma}(a)\in\caB(\caK_{\gamma})$,
for each $\gamma\in\caP_{\sigma}$.

As $\pi$ is a $*$-representation, for each $\gamma\in \pgs$, the map
$\pi_{\gamma}:\caA^{G}_{{\Gamma_0}}\ni a\mapsto \pi_{\gamma}(a)\in \caB(\caK_{\gamma})$ is 
a $*$-representation and we have
\begin{align}
\pi\lmk\caA^{G}_{{\Gamma_0}}\rmk''\subset\bigoplus_{\gamma\in \pgs}\unit_{\caH_{\gamma}}\otimes
B(\caK_{\gamma}).
\end{align}

We claim that each $\pi_{\gamma}$ is an irreducible representation of $\caA^{G}_{{\Gamma_0}}$, and
(\ref{ttg}) holds.
To see this, note that for any $x\in \caB(\caK_{\gamma})$, 
there exists a bounded net $\{a_{\lambda}\}_{{\lambda}}\in\caA$ such that
$\pi(a_{\lambda})$ converges to $\unit_{\caH_{\gamma}}\otimes x\in
\caB(\caH_{\gamma})\otimes \caB(\caK_{\gamma})\subset \caB(\caH)$ 
in the $\sigma$-strong topology,
by the irreducibility of $\pi$ and the Kaplansky density theorem.
For this $\{a_{\lambda}\}_{{\lambda}}$,
we have
\begin{align}\label{access}
\frac 1{|G|} \sum_{g\in G} u_{g} \pi(a_{\lambda}) u_{g}^{*}
=\pi\lmk
\frac 1{|G|} \sum_{g\in G}\tau_{{\Gamma_0}}(g)\lmk a_{\lambda}\rmk
\rmk
=
\bigoplus_{\gamma\in \pgs} \unit_{\caH_{\gamma}}\otimes \pi_{\gamma}\lmk
\frac 1{|G|} \sum_{g\in G}\tau_{{\Gamma_0}}(g)\lmk a_{\lambda}\rmk\rmk
\in \pi\lmk\caA^{G}_{{\Gamma_0}}\rmk''
\end{align}
because $\frac 1{|G|} \sum_{g\in G}\tau_{{\Gamma_0}}(g)\lmk a_{\lambda}\rmk$
is $\tau_{{\Gamma_0}}$-invariant.
Since the left hand side of (\ref{access}) converges to
$\unit_{\caH_{\gamma}}\otimes x$ 
in the $\sigma$-strong topology,
we conclude that
$\unit_{\caH_{\gamma}}\otimes x\in \pi\lmk\caA^{G}_{{\Gamma_0}}\rmk''$. 
Hence (\ref{ttg}) holds. Looking at the $\gamma$-component of (\ref{access}),
we see that $\pi_{\gamma}\lmk\caA^{G}_{{\Gamma_0}}\rmk''=\caB(\caK_{\gamma})$.
Hence $\pi_{\gamma}$ is irreducible.
This completes the proof.
\end{proof}

For each Lemma below, we use the machinery used in \cite{fkk} and \cite{kos}.
We remark arguments required to get a path inside of $\caA_{\Gamma}^{G}$.
\begin{lem}\label{lem2}
Let $(\caH_{i},\pi_{i},u_{i})$ with $i=0,1$ be irreducible covariant representations of 
$\Sigma_{\Gamma}^{(\sigma)}$ with
irreducible decomposition of $u_{i}$ given by a set of Hilbert spaces 
$\{\caK_{\gamma,i}\mid \gamma \in  \pgs\}$, $i=0,1$.
Let $\xi_{i}\in\caK_{\alpha,i}$, $i=0,1$ be unit vectors.
Let $\hat\varphii_{{\xi}_{i}}$ be a state on $\caB(\caH_{\alpha})\otimes C^{*}(\Sigma_{\Gamma}^{(\sigma)})$
given by $(\caH_{i},\pi_{i},u_{i},\xi_{i})$, for each $i=0,1$.
(Recall Notation \ref{nagai}.)
Then for any $\caF\Subset  \caB(\caH_{\alpha})\otimes C^{*}(\Sigma_{\Gamma}^{(\sigma)})$ and any $\varepsilon>0$, 
there exists a self-adjoint $h\in \caA_{\Gamma}^{G}$
such that
\begin{align}\label{lem2main}
\lv
\hat\varphii_{{\xi}_{0}}\lmk x\rmk-\hat\varphii_{{\xi}_{1}}\circ \Ad(e^{ih})(x)
\rv<\varepsilon,\quad x\in\caF. 
\end{align}
\end{lem}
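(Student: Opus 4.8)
The plan is to reduce the assertion, via Lemma~\ref{lem11iii}, to a transitivity statement for pure states of the simple $C^{*}$-algebra $\caB(\caH_{\alpha})\otimes C^{*}(\Sigma_{\Gamma}^{(\sigma)})$, and then to run the construction of \cite{fkk,kos} (recalled in Appendix~\ref{kosfkk}) while keeping the connecting unitaries inside $1_{\caH_{\alpha}}\otimes\caA_{\Gamma}^{G}$. For the reduction, let $\ltm(\caK_{\gamma,i},\pi_{\gamma,i})\mid\gamma\in\pgs\rtm$ be the family of representations of $\caA_{\Gamma}^{G}$ associated to $(\caH_{i},\pi_{i},u_{i})$ (Lemma~\ref{lem11iii}, Notation~\ref{agrep}), $i=0,1$. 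For a self-adjoint $h\in\caA_{\Gamma}^{G}$, read in $\caB(\caH_{\alpha})\otimes C^{*}(\Sigma_{\Gamma}^{(\sigma)})$ via $a\mapsto 1_{\caH_{\alpha}}\otimes a$, Lemma~\ref{lem11iii} gives $\hat\pi_{1}(e^{ih})=1_{\caH_{\alpha}}\otimes\bigoplus_{\gamma\in\pgs}1_{\caH_{\gamma}}\otimes e^{i\pi_{\gamma,1}(h)}$; since $\tilde\xi_{1}=\Omega_{\alpha}\otimes\xi_{1}$ lies in the $\alpha$-component, $\hat\pi_{1}(e^{-ih})\tilde\xi_{1}=\Omega_{\alpha}\otimes e^{-i\pi_{\alpha,1}(h)}\xi_{1}$, and therefore $\hat\varphii_{\xi_{1}}\circ\Ad(e^{ih})=\hat\varphii_{\eta}$, the state given by $(\caH_{1},\pi_{1},u_{1},\eta)$ (Notation~\ref{nagai}) with $\eta:=e^{-i\pi_{\alpha,1}(h)}\xi_{1}\in\caK_{\alpha,1}$. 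Because $\pi_{\alpha,1}$ is irreducible (Lemma~\ref{lem11iii}), the Kadison transitivity theorem lets one realise, up to an arbitrarily small error, any prescribed unit vector of $\caK_{\alpha,1}$ as such an $\eta$; as $\eta\mapsto\hat\varphii_{\eta}$ is weak$^{*}$-continuous, it suffices to produce, for given $\caF$ and $\varepsilon$, a unit vector $\eta\in\caK_{\alpha,1}$ with $\abs{\hat\varphii_{\xi_{0}}(x)-\hat\varphii_{\eta}(x)}<\varepsilon$ for all $x\in\caF$.

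For this transitivity step I would use the machinery of \cite{fkk,kos}. After replacing $\caF$ by a slightly larger finite set, I may assume each of its members is a finite sum of elements $T\otimes a\lambda_{g}$ with $T\in\caB(\caH_{\alpha})$, $g\in G$, and $a$ in a common $\caA_{\Lambda}$, $\Lambda\Subset\Gamma$. Two observations make it possible to keep the perturbation in $\caA_{\Gamma}^{G}$. First, $\hat\varphii_{\xi_{0}}$ and every $\hat\varphii_{\eta}$, $\eta\in\caK_{\alpha,1}$, restrict to one and the same state $T\otimes\lambda_{g}\mapsto\la\Omega_{\alpha},(T\otimes V_{\alpha}(g))\Omega_{\alpha}\ra$ on the finite-dimensional subalgebra $\caC:=\caB(\caH_{\alpha})\otimes C^{*}(\ltm\lambda_{g}\mid g\in G\rtm)$, because the GNS vectors all lie in the $\alpha$-component, on which $u_{i}(g)$ acts as $V_{\alpha}(g)\otimes 1$, so the $\xi$- and $\eta$-dependence cancels; moreover $\Ad(e^{ih})$ with $h\in\caA_{\Gamma}^{G}$ fixes $\caC$ pointwise, since $e^{ih}\lambda_{g}e^{-ih}=e^{ih}e^{-i\tau_{\Gamma}(g)(h)}\lambda_{g}=\lambda_{g}$. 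Second, by Proposition~\ref{zbdi} and Lemma~\ref{lem11iii}, $\pi_{i}\lmk\caA_{\Gamma}^{G}\rmk''=\bigoplus_{\gamma\in\pgs}1_{\caH_{\gamma}}\otimes\caB(\caK_{\gamma,i})$, so $\caA_{\Gamma}^{G}$ already acts fully on the $\alpha$-component, exactly the corner where the GNS vectors $\tilde\xi_{i}$ sit. Now $\hat\pi_{0},\hat\pi_{1}$ are irreducible representations of the simple (section~\ref{crossedproduct}), separable, unital $C^{*}$-algebra $\caB(\caH_{\alpha})\otimes C^{*}(\Sigma_{\Gamma}^{(\sigma)})$, and $\hat\varphii_{\xi_{0}},\hat\varphii_{\xi_{1}}$ coincide on $\caC$; running the construction of \cite{fkk,kos} for these data, every unitary rotation it calls for can be taken of the form $e^{ih}$ with $h=h^{*}\in\caA_{\Gamma}^{G}$: by the first observation the part on $\caC$ is automatic, and by the second the remaining rotations, which only involve the $\alpha$-component, can be implemented from within $\caA_{\Gamma}^{G}$. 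This produces a self-adjoint $h\in\caA_{\Gamma}^{G}$ with $\abs{\hat\varphii_{\xi_{0}}(x)-\hat\varphii_{\xi_{1}}\circ\Ad(e^{ih})(x)}<\varepsilon$ for all $x\in\caF$, which is (\ref{lem2main}).

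The hard part is exactly this last localisation. The transitivity theorems of \cite{fkk,kos} on their own only yield a unitary in the full algebra, and the delicate work is to re-examine their inductive, approximate construction and check that at every stage where a self-adjoint element is extracted from Kadison/Glimm transitivity it can instead be drawn from $\caA_{\Gamma}^{G}$, using the two observations above, without destroying the norm estimates that make the construction converge. This is the additional argument referred to in the introduction, carried out here and in the lemmas that follow.
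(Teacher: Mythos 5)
Your reduction is sound and matches the paper's strategy: using Lemma~\ref{lem11iii} you correctly note that $\hat\varphii_{\xi_{1}}\circ\Ad(e^{ih})=\hat\varphii_{\eta}$ with $\eta=e^{-i\pi_{\alpha,1}(h)}\xi_{1}\in\caK_{\alpha,1}$, and that since $\pi_{\alpha,1}$ is irreducible, by Kadison transitivity it suffices to exhibit a unit vector $\eta\in\caK_{\alpha,1}$ making $\hat\varphii_{\eta}$ approximate $\hat\varphii_{\xi_{0}}$ on $\caF$. Your two observations (agreement of $\hat\varphii_{\xi_{0}}$ and every $\hat\varphii_{\eta}$ on $\caB(\caH_{\alpha})\otimes C^{*}(\{\lambda_{g}\})$, and the fact that $\pi_{i}(\caA_{\Gamma}^{G})''$ acts fully on the $\alpha$-component) are also correct.

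However, you leave the actual production of $\eta$ unresolved, and what you propose — re-running the whole iterative construction of \cite{fkk,kos} and checking at each stage that the self-adjoint element lies in $\caA_{\Gamma}^{G}$ — is not how the paper closes this gap, and as phrased it does not clearly converge to a proof. The missing concrete device is as follows: one builds an explicit operator
\begin{align*}
R^{(\alpha)}:=\frac1{n_{\alpha}}\sum_{k,j=1}^{n_{\alpha}}\ketbra{\psi_{k}^{(\alpha)}}{\psi_{j}^{(\alpha)}}\otimes Q_{k,j}^{(\alpha)}\in\caB(\caH_{\alpha})\otimes C^{*}(\Sigma_{\Gamma}^{(\sigma)}),
\qquad
Q_{k,j}^{(\alpha)}=\frac{n_{\alpha}}{|G|}\sum_{g}\overline{\braket{\psi_{k}^{(\alpha)}}{V_{\alpha}(g)\psi_{j}^{(\alpha)}}}\lambda_{g},
\end{align*}
whose image under $\hat\pi_{i}$ is precisely the projection $P^{(\alpha,i)}=\ketbra{\Omega_{\alpha}}{\Omega_{\alpha}}\otimes\unit_{\caK_{\alpha,i}}$ onto the $\alpha$-component, and for which $\hat\varphii_{\xi_{i}}(R^{(\alpha)})=1$. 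The paper then applies Lemma~\ref{kos1312} \emph{once} to the augmented finite set $\caF\cup\{R^{(\alpha)}\}$; the $\zeta$ thus produced automatically satisfies $\|(\unit-P^{(\alpha,1)})\zeta\|^{2}<\tilde\varepsilon$, so $P^{(\alpha,1)}\zeta$ normalizes to a unit vector of the form $\Omega_{\alpha}\otimes\eta$ with $\eta\in\caK_{\alpha,1}$, and one finishes with a single Kadison transitivity step for $\pi_{\alpha,1}$. This "insert $R^{(\alpha)}$ into the test set" trick is exactly the step you flag as the hard part and do not supply; without it, there is no reason why the vector produced by Lemma~\ref{kos1312} should lie in, or even near, the $\alpha$-component, so the passage from the \cite{fkk,kos} transitivity to a perturbation in $\caA_{\Gamma}^{G}$ remains a genuine gap. (Note also that the paper does not need the full iterative scheme of \cite{fkk,kos} here — that machinery is only invoked later, in Proposition~\ref{lem6}.)
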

\begin{proof}
First we prepare some notations.
We denote by $\tilde\xi_{i}$, $
\hat\pi_{i}$ 
the vector and the representation $\tilde\xi$, $
\hat\pi$
defined in Notation \ref{nagai}
with $(\caH,\pi,u,\xi)$ replaced by $(\caH_{i},\pi_{i},u_{i},\xi_{i})$.
(See (\ref{kaeru}) and (\ref{phat}).)
The triple $( \caH_{\alpha}\otimes \caH_{i},\hat\pi_{i}, \tilde\xi_{i})$ is a GNS-triple 
of $\hat\varphii_{{\xi}_{i}}$.
As $ \caB(\caH_{\alpha})\otimes C^{*}(\Sigma_{\Gamma}^{(\sigma)})$ is simple, kernel of 
$\hat\pi_{i}$ is zero for each $i=0,1$.
For each $k,j=1,\ldots, n_{\alpha}$, 
we define an element
\begin{align}
Q_{k,j}^{(\alpha)}:=\frac{n_{\alpha}}{|G|}\sum_{g\in G}
\overline
{\braket{\psi_{k}^{(\alpha)}}{V_{\alpha}(g)\psi_{j}^{(\alpha)}}}
\lambda_{g}
\in C^{*}(\Sigma_{\Gamma}^{(\sigma)}),
\end{align}
with $\lambda_{g}\in C^{*}(\Sigma_{\Gamma}^{(\sigma)})$
introduced in section \ref{crossedproduct} (\ref{ldef}).
We also set
\begin{align}
R^{(\alpha)}:=\frac1{n_{\alpha}}\sum_{k,j=1}^{n_{\alpha}} \ketbra{\psi_{k}^{(\alpha)}}{\psi_{j}^{(\alpha)}}\otimes  Q_{k,j}^{(\alpha)}
\in\caB(\caH_{\alpha})\otimes C^{*}(\Sigma_{\Gamma}^{(\sigma)}).
\end{align}
We claim
\begin{align}\label{puq}
\lmk\pi_{i}\times u_{i}\rmk\lmk Q_{k,j}^{(\alpha)}\rmk
=\ketbra{\psi_{k}^{(\alpha)}}{\psi_{j}^{(\alpha)}}\otimes\unit_{\caK_{\alpha,i}}
\in\caB\lmk  \caH_{\alpha}\otimes \caK_{\alpha,i}\rmk
\subset \caB\lmk \caH_{i}\rmk,
\end{align}
for each $k,j=1,\ldots, n_{\alpha}$ and $i=0,1$.
From this we have
\begin{align}\label{66}
\hat\pi_{i}\lmk R^{(\alpha)}\rmk
=\ketbra{\Omega_{\alpha}}{\Omega_{\alpha}}\otimes \unit_{\caK_{\alpha,i}}
=:P^{(\alpha,i)},\quad i=0,1.
\end{align}
From  (\ref{66}), we obtain 
\begin{align}\label{or1}
\hat\varphii_{{\xi}_{i}}(R^{(\alpha)})=1,\quad i=0,1.
\end{align}
To see (\ref{puq}), recall the orthogonality relation (\ref{orthog}), 
the irreducible decomposition of $u_{i}$ given by $\{\caK_{\gamma,i}\mid \gamma \in  \pgs\}$ and 
that $(\pi_{i}\times u_{i})(\lambda_{g})=u_{i}(g)$ (\ref{rg}).
Then we have
\begin{align}
&\lmk\pi_{i}\times u_{i}\rmk\lmk Q_{k,j}^{(\alpha)}\rmk
=\frac{n_{\alpha}}{|G|}\sum_{g\in G}
\overline
{\braket{\psi_{k}^{(\alpha)}}{V_{\alpha}(g)\psi_{j}^{(\alpha)}}}
u_{i}(g)\nonumber\\
&=\frac{n_{\alpha}}{|G|}\sum_{g\in G}
\overline
{\braket{\psi_{k}^{(\alpha)}}{V_{\alpha}(g)\psi_{j}^{(\alpha)}}}
\lmk
\bigoplus_{\gamma\in \pgs}
\uu_{\gamma}(g)\otimes \unit_{\caK_{\gamma,i}}
\rmk
=\bigoplus_{\gamma\in \pgs}\lmk
\delta_{\alpha,\gamma}
\ketbra{\psi_{k}^{(\alpha)}}{\psi_{j}^{(\alpha)}}\otimes\unit_{\caK_{\alpha,i}}\rmk.
\end{align}

We now start the proof of Lemma.
We fix an arbitrary $\caF\Subset  \caB(\caH_{\alpha})\otimes C^{*}(\Sigma_{\Gamma}^{(\sigma)})$ and $\varepsilon>0$.
We then choose $0<\tilde\varepsilon$ small enough so that 
\begin{align}\label{ted}
\tilde\varepsilon< \min\{1,\frac\varepsilon 2\},\quad
\text{and}\quad 
4\max_{a\in\caF}\lV a\rV 
\tilde\varepsilon^{\frac 12}<\frac\varepsilon 2.
\end{align}
We also set
\begin{align}\label{frset}
\tilde\caF:=
\caF\cup
\left\{
R^{(\alpha)}
\right\}
\Subset B(\caH^{(\alpha)})\otimes  C^{*}(\Sigma_{\Gamma}^{(\sigma)}).
\end{align}
Applying Lemma \ref{kos1312} to this $\tilde\varepsilon$ and $\tilde\caF$,
and pure states $\hat\varphii_{{\xi}_{i}}$, $i=0,1$ of a simple unital $C^{*}$-algebra $ B(\caH^{(\alpha)})\otimes  C^{*}(\Sigma_{\Gamma}^{(\sigma)})$,
we obtain
an $f\in \lmk B(\caH^{(\alpha)})\otimes  C^{*}(\Sigma_{\Gamma}^{(\sigma)})\rmk_{+,1}$
and a unit vector $\zeta\in \caH^{(\alpha)}\otimes \caH_{1}$ such that 
\begin{align}\label{zeta}
\hat\pi_{1}(f)\zeta=\zeta,\quad 
\lV
f\lmk a-\hat\varphii_{{\xi}_{0}}(a)\unit \rmk f
\rV<\tilde\varepsilon,\quad \text{for all}\quad a\in \tilde\caF.
\end{align}
For $P^{(\alpha,1)}$ in (\ref{66}) and the $\zeta$ in (\ref{zeta}), we have
\begin{align}
&\lV
\lmk \unit-P^{(\alpha,1)}\rmk\zeta
\rV^{2}
=\braket{\zeta}{\hat\pi_{1}(f)\lmk \unit- \hat\pi_{1}(R^{(\alpha)})\rmk  \hat\pi_{1}(f)\zeta }
=\braket{\zeta}{\hat\pi_{1}(f)\lmk \hat\varphii_{{\xi}_{0}}(R^{(\alpha)})\unit- \hat\pi_{1}(R^{(\alpha)})\rmk  \hat\pi_{1}(f)\zeta }\nonumber\\
&\le \lV
f\lmk \hat\varphii_{{\xi}_{0}}(R^{(\alpha)})\unit- R^{(\alpha)}\rmk f
\rV
<\tilde \varepsilon.
\end{align}
Here we used (\ref{or1}), for the second equality.
For the inequality we used (\ref{zeta}) and $R^{(\alpha)}\in\tilde \caF$ (\ref{frset}).
Therefore, $P^{(\alpha,1)}\zeta$ is not zero, and we may define a unit vector
\begin{align}
\tilde\zeta:=
\frac 1{\lV P^{(\alpha,1)}\zeta\rV}P^{(\alpha,1)}\zeta\in \caH_{\alpha}\otimes \caH_{1}.
\end{align}
Furthermore, it
satisfies
\begin{align}
\lV \zeta-\tilde\zeta\rV\le
2\tilde\varepsilon^{\frac12}.
\end{align}
From this and two properties in (\ref{zeta})
for any $a\in \caF$, we have
\begin{align}\label{est6}
&\lv
\hat\varphii_{{\xi}_{0}}(a)-\braket{\tilde\zeta}{ \hat\pi_{1}(a)   \tilde\zeta}
\rv
\le
\lv
\braket{\hat\pi_{1}(f)\zeta}{\lmk \hat\varphii_{{\xi}_{0}}(a)- \hat\pi_{1}(a)\rmk  \hat\pi_{1}(f)\zeta }
\rv
+\lv
\braket{\zeta}{\hat\pi_{1}(a)  \zeta }
-\braket{\tilde\zeta}{\hat\pi_{1}(a)\tilde\zeta }
\rv\nonumber\\
&\le
\lV
f\lmk a-\hat\varphii_{{\xi}_{0}}(a)\unit \rmk f
\rV
+2\max_{a\in\caF}\lV a\rV \lV \zeta-\tilde\zeta\rV
<\tilde\varepsilon+2\max_{a\in\caF}\lV a\rV 2\tilde\varepsilon^{\frac12}<\varepsilon,
\end{align}
by the choice of $\tilde\varepsilon$ (\ref{ted}).

Since $ P^{(\alpha,1)}\tilde\zeta= \tilde\zeta$,
there exists a unit vector $\eta\in\caK_{\alpha,1}$ such that
$\tilde\zeta=\Omega_{\alpha}\otimes \eta$.
By Lemma \ref{lem11iii}, 
for each $\gamma\in\pgs$, there exists an irreducible representation $ \pi_{\gamma,1}$
of $\caA_{\Gamma}^{G}$ on $\caK_{\gamma,1}$ 
such that
\begin{align}
\pi_{1}(a)=\bigoplus_{\gamma\in \pgs} \unit_{\caH_{\gamma}}\otimes \pi_{\gamma,1}(a),\quad
a\in \caA^{G}_{\Gamma}.
\end{align}
Applying the Kadison transitivity theorem for unit vectors $\xi_{1}, \eta\in \caK_{\alpha,1}$ and an irreducible representation $(\caK_{\alpha,1}, \pi_{\alpha,1})$ of $\caA_{\Gamma}^{G}$,
we obtain a self-adjoint $h\in \caA^{G}_{\Gamma}$ such that
$\pi_{\alpha,1}(e^{-ih})\xi_{1}=\eta$.
With this $h$, we can write $\tilde\zeta$ as
\begin{align}
\tilde\zeta=\hat\pi_{1}\lmk
\unit_{B(\caH_{\alpha})}\otimes e^{-ih}
\rmk\tilde\xi_{1}.
\end{align}
Hence we obtain
\begin{align}
\hat\varphii_{{\xi}_{1}}\circ \Ad\lmk e^{ih}\rmk
=\braket{\tilde\zeta}{\hat\pi_{1}\lmk \cdot \rmk \tilde\zeta}.
\end{align}
Combining this with (\ref{est6}), we see that (\ref{lem2main}) holds.
\end{proof}
\begin{rem}
The main difference of the proof of Lemma \ref{lem2} from \cite{kos}, \cite{fkk}
is that in order to find $h$ in $\caA_{\Gamma}^{G}$, we add
$R^{(\alpha)}$ to $\caF$.
This allows us to replace $\zeta$ with $\tilde\zeta=\Omega_{\alpha}\otimes \eta$.
From this combined with Lemma \ref{lem11iii}, the problem is reduced to the Kadison transitivity
for the irreducible $(\caK_{\alpha,1},\pi_{\alpha,1}(\caA_{\Gamma}^{G}))$.
Note that $R^{(\alpha)}$ belongs to $B(\caH_{\alpha})\otimes C^{*}(\Sigma_{\Gamma}^{(\sigma)})$
but not in $\caA_{\Gamma}$.
By extending the $C^{*}$-algebra we consider,
we are allowed to have the projection $P^{(\alpha,i)}$ (\ref{66})
corresponding to the irreducible component of $u_{i}$  in the $C^{*}$-algebra.
\end{rem}

\begin{notation}\label{gca}
For $\Lambda\Subset \Gamma$,
we introduce a finite subset of $B(\caH_{\alpha})\otimes C^{*}(\Sigma_{\Gamma}^{(\sigma)})$
given by
\begin{align}
\caG_{\Lambda}:=
\left\{
\frac 1{\sqrt{|G|}} \ketbra{\psi_{j}^{(\alpha)}}{\psi_{1}^{(\alpha)}}\otimes
\lambda_{g} E_{I,I_{0}}^{(\Lambda)}\quad
\mid \quad
j=1,\ldots, n_{\alpha},\; I\in \{1,\ldots,d\}^{\times \Lambda},\;\;
g\in G
\right\}.
\end{align}
Here, we set $I_{0}:=(i_{k})_{k\in\Lambda}\in \{1,\ldots,d\}^{\times \Lambda}$, with $i_{k}=1$ for all $k\in\Lambda$.
\end{notation}
\begin{notation}\label{cond1}
We say an irreducible  covariant representation $(\caH,\pi,u)$ of $\Sigma_{\Gamma}^{(\sigma)}$
and unit vectors $\xi,\eta\in \caH_{\alpha}\otimes\caH$ satisfy
{\it Condition 1}  for a pair $\delta>0$, $\Lambda\Subset \Gamma$,
if the representation $\hat\pi:=\id_{\caB(\caH_{\alpha})}\otimes \lmk \pi\times u\rmk$
of $B(\caH_{\alpha})\otimes C^{*}(\Sigma_{\Gamma}^{(\sigma)})$ satisfies the following:
\begin{enumerate}
\item
For any $x,y\in \caG_{\Lambda}$,
$\hat\pi(x)^{*}\xi$ and $\hat\pi(y)^{*}\eta$ are orthogonal.
\item For any $x,y\in \caG_{\Lambda}$,
\begin{align}
\lv
\braket{\xi}{\hat\pi(x y^{*})\xi}
-\braket{\eta}{\hat\pi(x y^{*})\eta}
\rv<\delta.
\end{align}
\end{enumerate}

\end{notation}
Let $\delta_{\rk}$ be the function given in Lemma \ref{lem6kos}.

\begin{lem}\label{lem4}
For any $\varepsilon>0$ and 
$\Lambda\Subset \Gamma$,
there exists a $\delta_{\nh}(\varepsilon, \Lambda)>0$ 
satisfying the following:
For any irreducible  covariant representation $(\caH,\pi,u)$ of $\Sigma_{\Gamma}^{(\sigma)}$
and unit vectors $\xi,\eta\in \caH_{\alpha}\otimes\caH$ satisfying
{\it Condition 1}  for a pair $\delta_{\nh}(\varepsilon, \Lambda)>0$, $\Lambda\Subset \Gamma$,
there exists a positive element $h$ of $(\caA_{\Gamma\setminus\Lambda}^{G})_{1}$
such that
\begin{align}
\lV
e^{i\pi\hat\pi (h)}\xi-\eta
\rV<
\frac{1}{4\sqrt 2}\delta_{\rk}\lmk \frac\varepsilon 8\rmk.
\end{align}
\end{lem}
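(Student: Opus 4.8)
The plan is to present Lemma~\ref{lem4} as an instance of the transitivity/rotation machinery of \cite{fkk,kos} (recalled in Appendix~\ref{kosfkk}), the extra work --- as everywhere in this section --- being the device that keeps $h$ inside the small algebra $\caA_{\Gamma\setminus\Lambda}^{G}$, which is again the combination of the structure of $\caG_{\Lambda}$ with Lemma~\ref{lem11iii} (cf.\ Lemma~\ref{lem2}). Two elementary facts about $\caG_{\Lambda}$ drive the argument. First, viewing $\caA_{\Gamma\setminus\Lambda}^{G}$ inside $\caB(\caH_{\alpha})\otimes C^{*}(\Sigma_{\Gamma}^{(\sigma)})$ as $\unit_{\caB(\caH_{\alpha})}\otimes\caA_{\Gamma\setminus\Lambda}^{G}$, it commutes with every element of $\caG_{\Lambda}$: an $a\in\caA_{\Gamma\setminus\Lambda}^{G}$ commutes with $E_{I,I_{0}}^{(\Lambda)}$ by disjointness of supports, with $\caB(\caH_{\alpha})$ trivially, and with each $\lambda_{g}$ because $\lambda_{g}a\lambda_{g}^{*}=\tau_{\Gamma}(g)(a)=a$ by (\ref{ldef}) and $G$-invariance. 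Second, $\sum_{y\in\caG_{\Lambda}}yy^{*}=\unit$, since $\sum_{j}\ketbra{\psi_{j}^{(\alpha)}}{\psi_{j}^{(\alpha)}}=\unit_{\caH_{\alpha}}$, $\sum_{I}E_{I,I}^{(\Lambda)}=\unit$, $\tau_{\Gamma}(g)$ is unital, and $\tfrac{1}{|G|}\sum_{g\in G}\unit=\unit$. The first fact gives $\hat\pi(y)^{*}\,e^{i\pi\hat\pi(h)}=e^{i\pi\hat\pi(h)}\,\hat\pi(y)^{*}$ for $h\in\caA_{\Gamma\setminus\Lambda}^{G}$, and the second gives $\lV e^{i\pi\hat\pi(h)}\xi-\eta\rV^{2}=\sum_{y\in\caG_{\Lambda}}\lV\hat\pi(y)^{*}\lmk e^{i\pi\hat\pi(h)}\xi-\eta\rmk\rV^{2}$. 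So it suffices to produce a positive $h\in\lmk\caA_{\Gamma\setminus\Lambda}^{G}\rmk_{1}$ such that $e^{i\pi\hat\pi(h)}$ moves each $\hat\pi(y)^{*}\xi$ to within $|\caG_{\Lambda}|^{-1/2}\cdot\tfrac{1}{4\sqrt2}\delta_{\rk}\lmk\tfrac{\varepsilon}{8}\rmk$ of $\hat\pi(y)^{*}\eta$.

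To carry this out I would factor $\caH$ over $\Lambda$ exactly as in the proof of Theorem~\ref{zbdip}: $\caH\cong\lmk\bigotimes_{\Lambda}\bbC^{d}\rmk\otimes\caH_{0}$ with $\pi(a)=\unit\otimes\pi_{0}(a)$ for $a\in\caA_{\Gamma\setminus\Lambda}$, $(\caH_{0},\pi_{0})$ irreducible, and $u(g)=\lmk\bigotimes_{\Lambda}U(g)\rmk\otimes u_{0}(g)$ with $(\caH_{0},\pi_{0},u_{0})$ an irreducible covariant representation of $\Sigma_{\Gamma\setminus\Lambda}^{(\sigma)}$ with $2$-cocycle $\sigma$. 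Applying Lemma~\ref{lem11iii} to $(\caH_{0},\pi_{0},u_{0})$ produces the irreducible decomposition $\caH_{0}\cong\bigoplus_{\gamma\in\pgs}\caH_{\gamma}\otimes\caK_{\gamma}$ of $u_{0}$ together with \emph{irreducible} representations $(\caK_{\gamma},\pi_{\gamma})$ of $\caA_{\Gamma\setminus\Lambda}^{G}$ with $\pi_{0}|_{\caA_{\Gamma\setminus\Lambda}^{G}}=\bigoplus_{\gamma}\unit_{\caH_{\gamma}}\otimes\pi_{\gamma}$; hence for $h\in\caA_{\Gamma\setminus\Lambda}^{G}$, $e^{i\pi\hat\pi(h)}=\unit_{\caH_{\alpha}}\otimes\unit_{\bigotimes_{\Lambda}\bbC^{d}}\otimes\bigoplus_{\gamma}\unit_{\caH_{\gamma}}\otimes e^{i\pi\pi_{\gamma}(h)}$, which commutes with each $u_{0}(g)$. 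A direct computation, using $\lmk\pi\times u\rmk(\lambda_{g})=u(g)$ (see (\ref{rg})), shows that $\hat\pi(y)^{*}\xi$ for $y=\tfrac{1}{\sqrt{|G|}}\ketbra{\psi_{j}^{(\alpha)}}{\psi_{1}^{(\alpha)}}\otimes\lambda_{g}E_{I,I_{0}}^{(\Lambda)}$ lies in $\bbC\psi_{1}^{(\alpha)}\otimes\bbC e_{I_{0}}^{(\Lambda)}\otimes\caH_{0}$ and, under the obvious identification of this subspace with $\caH_{0}$, equals $\tfrac{1}{\sqrt{|G|}}u_{0}(g)^{*}\xi^{(j,I,g)}$ for a vector $\xi^{(j,I,g)}\in\caH_{0}$ extracted from $\xi$; likewise for $\eta$. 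Spelling out Condition~1 in these coordinates, using that $u_{0}(G)''=\bigoplus_{\gamma}\caB(\caH_{\gamma})\otimes\unit_{\caK_{\gamma}}$, turns part~1 into the orthogonality of the $\caK_{\gamma}$-components (slot by slot over $\caH_{\gamma}$) of $\xi$ and $\eta$, and part~2 into the statement that their Gram matrices agree up to $\delta_{\nh}(\varepsilon,\Lambda)$.

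This is exactly the data fed to the rotation machinery: the unitary $\bigoplus_{\gamma}\unit_{\caH_{\gamma}}\otimes W_{\gamma}$ with $W_{\gamma}\in\caB(\caK_{\gamma})$ carrying the $\caK_{\gamma}$-components of $\xi$ to those of $\eta$ lies in $\bigoplus_{\gamma}\unit_{\caH_{\gamma}}\otimes\caB(\caK_{\gamma})=\pi_{0}\lmk\caA_{\Gamma\setminus\Lambda}^{G}\rmk''$ --- here one uses that each $\pi_{\gamma}$ is irreducible (Lemma~\ref{lem11iii}), and that $\dim\caK_{\gamma}=\infty$ (Proposition~\ref{zbdi}) leaves room to extend the approximate isometry to a unitary --- so by the incremental rotation of \cite{fkk,kos} there is a positive $h\in\lmk\caA_{\Gamma\setminus\Lambda}^{G}\rmk_{1}$ with $e^{i\pi\pi_{0}(h)}$ agreeing with it on the orbit vectors up to the prescribed error, the error being controlled by the proximity of the Gram matrices and by the number $n:=|\caG_{\Lambda}|=n_{\alpha}\,d^{|\Lambda|}\,|G|$ of vectors. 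Accordingly I would \emph{define} $\delta_{\nh}(\varepsilon,\Lambda)$ to be the threshold that the machinery requires so as to achieve per-orbit-vector error below $n^{-1/2}\cdot\tfrac{1}{4\sqrt2}\delta_{\rk}\lmk\tfrac{\varepsilon}{8}\rmk$ when $n$ vectors are involved; this (through $n$) is the sole source of the dependence of $\delta_{\nh}$ on $\Lambda$. Plugging Condition~1 in and invoking the two elementary facts of the first paragraph then yields the asserted $h$.

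The only real obstacle --- as announced in the text before Lemma~\ref{lem11iii} --- is not the rotation, which is the \cite{fkk,kos} machinery, but keeping $h$ inside $\caA_{\Gamma\setminus\Lambda}^{G}$; this is precisely what forces the detour through $\caG_{\Lambda}$ (generators localized in $\Lambda$ and twisted only by the $\lambda_{g}$, hence commuting with $\caA_{\Gamma\setminus\Lambda}^{G}$ and resolving the identity) and through Lemma~\ref{lem11iii} (which shows $\caA_{\Gamma\setminus\Lambda}^{G}$ still acts irreducibly on each multiplicity space $\caK_{\gamma}$, leaving room for the Kadison-transitivity step inside $\caA_{\Gamma\setminus\Lambda}^{G}$). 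The delicate-but-routine work is the explicit translation of Condition~1 into the orthogonality and congruence of the $\caK_{\gamma}$-profiles (via $u_{0}(G)''=\bigoplus_{\gamma}\caB(\caH_{\gamma})\otimes\unit_{\caK_{\gamma}}$, watching that the $\xi^{(j,I,g)}$ still depend on $g$) and the tracking of the several small constants so that the final bound is exactly $\lV e^{i\pi\hat\pi(h)}\xi-\eta\rV<\tfrac{1}{4\sqrt2}\delta_{\rk}\lmk\tfrac{\varepsilon}{8}\rmk$.
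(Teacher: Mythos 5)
Your proposal has the right ingredients (commutativity of $\caA_{\Gamma\setminus\Lambda}^{G}$ with $\caG_{\Lambda}$, the resolution $\sum_{y\in\caG_{\Lambda}}yy^{*}=\unit$, the factorization over $\Lambda$, Lemma~\ref{lem11iii} for the irreducibility of the $\pi_{\gamma}$'s, and the correct value $n_{\alpha}d^{|\Lambda|}|G|$) but the actual \emph{construction of $h$} is not carried out, and the step you gesture at does not go through in the form you describe. The paper's proof applies Lemma~\ref{lem28} to the \emph{big} irreducible algebra $\hat\pi\bigl(\caB(\caH_{\alpha})\otimes C^{*}(\Sigma_{\Gamma}^{(\sigma)})\bigr)$ with the family $\hat\pi(\caG_{\Lambda})$, obtaining a $\tilde h$ and forming $\bar h:=\sum_{x\in\caG_{\Lambda}}x\tilde hx^{*}$; the key insight is then purely algebraic: $\bar h$ commutes with $\caB(\caH_{\alpha})\otimes\caA_{\Lambda}$ and with all $\lambda_{g}$, which after factorizing $\caH\cong\lmk\bigotimes_{\Lambda}\bbC^{d}\rmk\otimes\tilde\caH$ forces $\hat\pi(\bar h)=\unit\otimes\unit\otimes y$ with $y\in\tilde u(G)'$. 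Only now does Lemma~\ref{lem11iii} enter, via the identity $\tilde u(G)'=\tilde\pi(\caA_{\Gamma\setminus\Lambda}^{G})''$, and the Kaplansky density theorem lets one approximate $y$ by $\tilde\pi(h)$ with $h\in(\caA_{\Gamma\setminus\Lambda}^{G})_{+,1}$ on the relevant vectors $\xi\pm\eta$. This gives the desired $h$.

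Your plan instead tries to invoke the rotation machinery directly inside $\caA_{\Gamma\setminus\Lambda}^{G}$ after the factorization, but $\pi_{0}|_{\caA_{\Gamma\setminus\Lambda}^{G}}$ acts \emph{reducibly} on $\caH_{0}$ (it decomposes as $\bigoplus_{\gamma}\unit_{\caH_{\gamma}}\otimes\pi_{\gamma}$), so the hypotheses of Lemma~\ref{lem28} (unital $C^{*}$-algebra acting irreducibly) are not satisfied; the sentence ``so by the incremental rotation of \cite{fkk,kos} there is a positive $h\in(\caA_{\Gamma\setminus\Lambda}^{G})_{1}$ with $e^{i\pi\pi_{0}(h)}$ agreeing with it on the orbit vectors up to the prescribed error'' is precisely the step that is not justified. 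Also, a unitary $\bigoplus_{\gamma}\unit\otimes W_{\gamma}$ carrying the $\caK_{\gamma}$-profile of $\xi$ to that of $\eta$ is generically far from $\unit$, so Kadison transitivity (Theorem~\ref{lem6kos}) is not applicable directly; what Lemma~\ref{lem28} accomplishes is exactly to bypass this by producing the rotation generator from the orthogonality-plus-Gram-matrix data, and your per-$\gamma$ reformulation of Condition~1 (orthogonality and congruence of $\caK_{\gamma}$-profiles) would still need to be fed through such a quantitative lemma, together with an explanation of how to glue the per-$\gamma$ rotations into a single element of $\caA_{\Gamma\setminus\Lambda}^{G}$. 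The paper avoids all of this by applying Lemma~\ref{lem28} \emph{before} passing to the small algebra and then using commutation and Kaplansky to descend; you should follow that order.
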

\begin{proof}
Recall Lemma \ref{lem28}. We set
\begin{align}
\delta_{\nh}(\varepsilon,\Lambda):=\delta_{\nhh}\lmk\varepsilon, n_{\alpha} d^{|\Lambda|} |G|\rmk,
\end{align}
with $\delta_{\nhh}(\cdot,\cdot)$ in Lemma \ref{lem28}.
We prove that this $\delta_{\nh}$ satisfies the condition above.

 Let us consider an arbitrary irreducible  covariant representation $(\caH,\pi,u)$ of $\Sigma_{\Gamma}^{(\sigma)}$
and unit vectors $\xi,\eta\in \caH_{\alpha}\otimes\caH$ satisfying
{\it Condition 1}  for a pair $\delta_{\nh}(\varepsilon,\Lambda)>0$, $\Lambda\Subset \Gamma$.
We again use the notation $\hat\pi$
(\ref{phat}) for this $\pi$.

We apply Lemma \ref{lem28}, to an infinite dimensional  Hilbert space
$\caH_{\alpha}\otimes \caH$, a unital
$C^{*}$-algebra 
$\hat\pi\lmk \caB(\caH_{\alpha})\otimes C^{*}(\Sigma_{\Gamma}^{(\sigma)})\rmk$
acting irreducibly on $\caH_{\alpha}\otimes \caH$, 
a finite subset $\hat\pi(\caG_{\Lambda})$
of $\hat\pi\lmk \caB(\caH_{\alpha})\otimes C^{*}(\Sigma_{\Gamma}^{(\sigma)})\rmk$, and
unit vectors $\xi,\eta\in \caH_{\alpha}\otimes\caH$.
Note that $\sum_{x\in\hat\pi(\caG_{\Lambda}) }xx^{*}=\unit$
by the definition of $\caG_{\Lambda}$.
From {\it Condition 1}, $\xi,\eta$ satisfy the required conditions in Lemma \ref{lem28}.
By Lemma \ref{lem28} , 
there exists a positive $\tilde h\in \lmk\caB(\caH_{\alpha})\otimes  C^{*}(\Sigma_{\Gamma}^{(\sigma)})\rmk_{+,1}$
such that 
\begin{align}\label{pmm}
\lV
\hat\pi\lmk \bar h\rmk\lmk\xi+\eta\rmk
\rV
<\frac 1{4\sqrt2} \delta_{\rk}\lmk \frac \varepsilon 8\rmk e^{-\pi},\quad \text{and}\nonumber\\
\lV\lmk\unit-\hat \pi (\bar h)\rmk
\lmk\xi-\eta\rmk
\rV
<\frac 1{4\sqrt2} \delta_{\rk}\lmk \frac \varepsilon 8\rmk e^{-\pi},
\end{align}
%
for
\begin{align}\label{hbardef}
\bar h:=\sum_{x\in \caG_{\Lambda}} x\tilde hx^{*}.
\end{align}
Here the function $\delta_{\rk}$ is given in Theorem \ref{lem6kos}.
By this definition of $\bar h$, we see that 
\begin{align}
\bar h\in \lmk{\caB(\caH_{\alpha})}\otimes \caA_{\Lambda}\rmk'\cap
\left\{
\lambda_{g}\mid g\in G
\right\}'\cap
\lmk \caB(\caH_{\alpha})\otimes  C^{*}(\Sigma_{\Gamma}^{(\sigma)})\rmk_{+,1}.
\end{align}

We would like to replace $\bar h$ in (\ref{pmm}) to some positive element
$h \in (\caA_{\Gamma\setminus\Lambda}^{G})_{+,1}$.
In order to do so, we factorize $(\caH,\pi,u)$ to $\Lambda$-part and $\Gamma\setminus \Lambda$-part:
As in the proof of Theorem \ref{zbdip},
there exists an irreducible  covariant representation $(\tilde \caH,\tilde \pi,\tilde u)$ of 
$\Sigma_{\Gamma\setminus \Lambda}^{(\sigma)}$
and a unitary $W:\caH\to \lmk \bigotimes_{\Lambda}\bbC^{d}\rmk\otimes \tilde\caH$
such that
\begin{align}\label{prev1}
W\pi(a)W^{*}=\lmk \id_{\caA_{\Lambda}}\otimes \tilde\pi\rmk (a),\quad a\in\caA_{\Gamma},
\end{align}
and
\begin{align}\label{up1}
W u(g)W^{*}=\lmk \bigotimes_{\Lambda}U(g)\rmk\otimes \tilde u(g),\quad g\in G.
\end{align}
By Lemma \ref{pd}, $\tilde u$ has an irreducible
decomposition of given by
a set of Hilbert spaces 
$\{\caK_{\gamma}\mid \gamma \in  \pgs\}$.
By Lemma \ref{lem11iii} and Lemma \ref{pd} we have
\begin{align}\label{utd}
\tilde \pi\lmk\caA^{G}_{\Gamma\setminus \Lambda}\rmk''=\bigoplus_{\gamma\in \pgs}\unit_{\caH_{\gamma}}\otimes
B(\caK_{\gamma})=\tilde u(G)'.
\end{align}
Recall (\ref{pmm}).
Choose $\delta>0$ so that
\begin{align}\label{delt}
\lV
\hat\pi\lmk \bar h\rmk\lmk\xi+\eta\rmk
\rV+\delta
<\frac 1{4\sqrt2} \delta_{\rk}\lmk \frac \varepsilon 8\rmk e^{-\pi},\quad \text{and}\nonumber\\
\lV\lmk\unit-\hat \pi (\bar h)\rmk
\lmk\xi-\eta\rmk
\rV+\delta
<\frac 1{4\sqrt2} \delta_{\rk}\lmk \frac \varepsilon 8\rmk e^{-\pi}.
\end{align}
As $\bar h$ is in $\lmk{\caB(\caH_{\alpha})}\otimes \caA_{\Lambda}\rmk'$,
from  (\ref{prev1}), we see that
there exists a positive $y\in \caB(\tilde\caH)_{1}$ such that
\begin{align}
\lmk \unit_{\caH_{\alpha}}\otimes W\rmk
\hat\pi(\bar h) \lmk \unit_{\caH_{\alpha}}\otimes W^{*}\rmk
=\unit_{\caH_{\alpha}}\otimes\unit_{ \bigotimes_{\Lambda}\bbC^{d}}\otimes y.
\end{align}
Furthermore, as $\bar h$ is in $\left\{
\lambda_{g}\mid g\in G
\right\}'$, from (\ref{up1}),
$y$ belongs to $\tilde u(G)'=\tilde\pi (\caA^{G}_{\Gamma\setminus \Lambda})''$
by (\ref{utd}).
By the Kaplansky density theorem, there exists a positive $ h\in \lmk\caA^{G}_{\Gamma\setminus \Lambda}\rmk_{+,1}$
such that
\begin{align}
\lV
\lmk
\hat\pi(h)-\hat\pi(\bar h)
\rmk\lmk
\xi\pm\eta
\rmk
\rV
=
\lV
\lmk
\unit_{\caH_{\alpha}}\otimes\unit_{ \bigotimes_{\Lambda}\bbC^{d}}\otimes
\lmk \tilde\pi (h)-y\rmk
\rmk
\lmk 
\unit_{\caH_{\alpha}}\otimes W
\rmk
\lmk
\xi\pm\eta
\rmk
\rV<\delta.
\end{align}
This $h$ satisfies 
\begin{align}
&\lV
\hat\pi(h)\lmk
\xi+\eta
\rmk
\rV
\le 
\lV
\lmk \hat\pi(h)-\hat\pi(\bar h)\rmk\lmk
\xi+\eta
\rmk
\rV+\lV
\hat\pi(\bar h)\lmk
\xi+\eta
\rmk
\rV
< \frac 1{4\sqrt2} \delta_{\rk}\lmk \frac \varepsilon 8\rmk e^{-\pi},\quad \text{and}\nonumber\\
&\lV
\lmk \unit-\hat\pi(h)\rmk\lmk
\xi-\eta
\rmk
\rV
\le 
\lV
\lmk \hat\pi(h)-\hat\pi(\bar h)\rmk\lmk
\xi-\eta
\rmk
\rV+\lV
\lmk \unit-\hat\pi(\bar h)\rmk\lmk
\xi-\eta
\rmk
\rV
< \frac 1{4\sqrt2} \delta_{\rk}\lmk \frac \varepsilon 8\rmk e^{-\pi},
\end{align}
from the choice of $\delta$, (\ref{delt}).
We then obtain the required property of $h$:
\begin{align}
&\lV
e^{i\pi\hat\pi (h)}\xi-\eta
\rV
\le\lV
\frac 12 \lmk
e^{i\pi\hat\pi(h)}(\xi+\eta)-(\xi+\eta)\rmk\rV
+\lV \frac 12 \lmk e^{i\pi\hat\pi(h)}(\xi-\eta)+(\xi-\eta)
\rmk
\rV\nonumber\\
&\le
\frac{e^{\pi}}2\lV \hat\pi(h)\lmk\xi+\eta\rmk\rV
+\frac{e^{\pi}}2\lV \lmk \unit -\hat\pi(h)\rmk\lmk\xi-\eta\rmk\rV
<
\frac{1}{4\sqrt 2}\delta_{\rk}\lmk \frac\varepsilon 8\rmk.
\end{align}
\end{proof}
\begin{rem}
Note that an average over $G$ is contained in (\ref{hbardef}).
Because of this, we could take $\bar h$ to be $\Ad\lambda_{g}$-invariant.
This is possible because $\lambda_{g}$
is included in the $C^{*}$-algebra we consider, i.e., in $\caB(\caH_{\alpha})\otimes  C^{*}(\Sigma_{\Gamma}^{(\sigma)})$.

The main difference of Lemma \ref{lem4} compared to \cite{kos} is replacing
$\bar h$ with $h\in \caA_{\Gamma\setminus \Lambda}^{G}$.
To carry it out, the decomposition (\ref{utd})
given from Lemma \ref{pd} Lemma \ref{lem11iii}
is used. This decomposition reduces the problem to the Kaplansky density Theorem for 
$\hat\pi (\caA_{\Gamma\setminus\Lambda}^{G})$.
\end{rem}
\begin{notation}\label{lef}
For any $\varepsilon>0$ and a finite set
$\caF\Subset \caA$, there exists a $\Lambda(\varepsilon,\caF)\Subset\Gamma$
such that 
\begin{align}\label{ab}
\inf\left\{\lV a-b\rV\mid b\in \caA_{\Lambda(\varepsilon,\caF)}\right\}<\frac\varepsilon {16},\quad
\text{for all}\quad a\in\caF.
\end{align}
For each $\varepsilon>0$ and
$\caF\Subset \caA$, we fix such $\Lambda(\varepsilon,\caF)$.
If $\caF$ is included in $\caA_{\Lambda}$ for some $\Lambda\Subset \Gamma$,
we choose $\Lambda(\varepsilon,\caF)$ so that $\Lambda(\varepsilon,\caF)\subset \Lambda$.
For any $\varepsilon>0$ and 
$\caF\Subset \caA$,
set
\begin{align}
\delta_{\sjt}(\varepsilon,\caF):=\frac 12 \delta_{\nh}\lmk \frac \varepsilon 4, 
\Lambda(\varepsilon,\caF)\rmk.
\end{align}
Here we used 
the function $\delta_{\nh}$ introduced in Lemma \ref{lem4}.
\end{notation}

\begin{lem}\label{lem5}
Let $\varepsilon>0$, and 
$\caF\Subset (\caA_{\Gamma})_{1}$.
Let $(\caH,\pi,u)$ be
an irreducible  covariant representation of $\Sigma_{\Gamma}^{(\sigma)}$
with an irreducible decomposition of $u$ given by a set of Hilbert spaces 
$\{\caK_{\gamma}\mid \gamma \in  \pgs\}$.
Let 
$\xi,\eta$ be unit vectors in $\caK_{\alpha}$.
Suppose that unit vectors  
\begin{align}\label{txedef}
\tilde\xi:=\Omega_{\alpha}\otimes \xi,\quad 
\tilde\eta:=\Omega_{\alpha}\otimes\eta\in \caH_{\alpha}\otimes \caH
\end{align}
satisfy
\begin{align}\label{dare}
\lv
\braket{\tilde \eta}{\hat \pi(xy^{*}) \tilde\eta}
-\braket{\tilde \xi}{\hat \pi(xy^{*}) \tilde \xi}
\rv
<\delta_{\sjt}(\varepsilon,\caF),\quad
\text{for all}\quad x,y\in \caG_{\Lambda(\varepsilon,\caF)}.
\end{align}
(Recall Notation \ref{nagai} and Notation \ref{gca}.)
Then there exists a norm-continuous path of unitaries
$v:[0,1]\to\caU(\caA_\Gamma^{G})$ such that $v(0)=\unit_{\caA_{\Gamma}}$,
\begin{align}\label{tetx}
\tilde\eta= \lmk\unit_{\caH_{\alpha}}\otimes \pi(v(1))\rmk\tilde\xi,
\end{align}
and
\begin{align}\label{adv}
\sup_{t\in[0,1]}\lV
\Ad v(t)(a)-a
\rV<\varepsilon,\quad \text{for all}\quad a\in \caF.
\end{align}
\end{lem}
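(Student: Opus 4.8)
The plan is to build the path $v$ as a concatenation of three norm‑continuous pieces in $\caU(\caA_{\Gamma}^{G})$: a ``correction'' lying in $\caA_{\Gamma\setminus\Lambda_{*}}^{G}$ inserted only so that Lemma~\ref{lem4} becomes applicable, the ``bulk'' piece $s\mapsto e^{is\pi h}$ with $h$ from Lemma~\ref{lem4}, and a short final piece from Theorem~\ref{lem6kos}. Throughout write $\Lambda_{*}:=\Lambda(\varepsilon,\caF)$, and recall from Notation~\ref{lef} that $\delta_{\sjt}(\varepsilon,\caF)=\tfrac12\,\delta_{\nh}(\varepsilon/4,\Lambda_{*})$ and $\inf\{\|a-b\|:b\in\caA_{\Lambda_{*}}\}<\varepsilon/16$ for all $a\in\caF$. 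By Lemma~\ref{lem11iii} applied to $(\caH,\pi,u)$, the fixed point algebra $\caA_{\Gamma}^{G}$ acts irreducibly on $\caK_{\alpha}$ by a representation $\pi_{\alpha}$ with $(\unit_{\caH_{\alpha}}\otimes\pi(v))\tilde\xi=\Omega_{\alpha}\otimes\pi_{\alpha}(v)\xi$ for all $v\in\caA_{\Gamma}^{G}$; and, factorizing $(\caH,\pi,u)$ along $\Gamma=\Lambda_{*}\cup(\Gamma\setminus\Lambda_{*})$ as in the proof of Theorem~\ref{zbdip} and applying Lemma~\ref{pd} and Lemma~\ref{lem11iii} to the $(\Gamma\setminus\Lambda_{*})$-factor, one gets a decomposition $\caK_{\alpha}=\bigoplus_{\gamma\in\pgs}\caN_{\gamma}\otimes\tilde\caK_{\gamma}$ with each $\caN_{\gamma}$ finite dimensional, each $\tilde\caK_{\gamma}$ infinite dimensional (Proposition~\ref{zbdi}), and $\pi_{\alpha}(a)=\bigoplus_{\gamma}\unit_{\caN_{\gamma}}\otimes\tilde\pi_{\gamma}(a)$ for $a\in\caA_{\Gamma\setminus\Lambda_{*}}^{G}$ with each $\tilde\pi_{\gamma}$ irreducible.

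\emph{Step 1 (arranging Condition~1).} Any $v\in\caU(\caA_{\Gamma\setminus\Lambda_{*}}^{G})$ commutes with $\caA_{\Lambda_{*}}$, with each $\lambda_{g}$ (since $v$ is $\tau_{\Gamma}$-invariant), and with $\caB(\caH_{\alpha})$, hence with every element of $\caG_{\Lambda_{*}}$ (Notation~\ref{gca}); so replacing $\tilde\eta$ by $\tilde\eta':=\Omega_{\alpha}\otimes\pi_{\alpha}(v)\eta$ leaves all the numbers $\braket{\tilde\eta}{\hat\pi(xy^{*})\tilde\eta}$, $x,y\in\caG_{\Lambda_{*}}$, unchanged, whence by (\ref{dare}) part~2 of \emph{Condition~1} (Notation~\ref{cond1}) holds for $(\caH,\pi,u)$ and $(\tilde\xi,\tilde\eta')$ with margin $\delta_{\sjt}(\varepsilon,\caF)<\delta_{\nh}(\varepsilon/4,\Lambda_{*})$. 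For part~1 I would use $\braket{\hat\pi(x)^{*}\tilde\xi}{\hat\pi(y)^{*}\tilde\eta'}=\braket{\hat\pi(yx^{*})\tilde\xi}{\Omega_{\alpha}\otimes\pi_{\alpha}(v)\eta}$ and the fact that $\caK_{\alpha}\ni\zeta\mapsto\big(\braket{\hat\pi(yx^{*})\tilde\xi}{\Omega_{\alpha}\otimes\zeta}\big)_{x,y\in\caG_{\Lambda_{*}}}$ is linear, hence factors through a subspace $\caC\subseteq\caK_{\alpha}$, independent of $v$, of dimension at most $|\caG_{\Lambda_{*}}|^{2}$; so it suffices to produce $w_{0}\in\caU(\caA_{\Gamma\setminus\Lambda_{*}}^{G})$ with $\pi_{\alpha}(w_{0})\eta\perp\caC$. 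Writing $\eta=\bigoplus_{\gamma}\eta_{\gamma}$ in $\caK_{\alpha}=\bigoplus_{\gamma}\caN_{\gamma}\otimes\tilde\caK_{\gamma}$, each $\eta_{\gamma}$ has finite Schmidt rank, its $\tilde\caK_{\gamma}$-vectors spanning a finite dimensional $K_{\gamma}\subseteq\tilde\caK_{\gamma}$, while the $\gamma$-component of $\caC$ has a finite dimensional ``$\tilde\caK_{\gamma}$-support'' $E_{\gamma}$; since $\dim\tilde\caK_{\gamma}=\infty$ there are unitaries $V_{\gamma}$ on $\tilde\caK_{\gamma}$ with $V_{\gamma}K_{\gamma}\perp(K_{\gamma}+E_{\gamma})$, and then $W:=\bigoplus_{\gamma}\unit_{\caN_{\gamma}}\otimes V_{\gamma}$ is a unitary in $\pi_{\alpha}(\caA_{\Gamma\setminus\Lambda_{*}}^{G})''$ with $W\eta\perp\caC$ and with the same $\pi_{\alpha}(\caA_{\Gamma\setminus\Lambda_{*}}^{G})'$-reduced state as $\eta$. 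As $\pi_{\alpha}(\caA_{\Gamma\setminus\Lambda_{*}}^{G})''$ is of type~I, the Kadison transitivity theorem (applied through the irreducible $\tilde\pi_{\gamma}$) provides $w_{0}=e^{ih_{0}}$ with $h_{0}=h_{0}^{*}\in\caA_{\Gamma\setminus\Lambda_{*}}^{G}$ and $\pi_{\alpha}(w_{0})\eta=W\eta\perp\caC$. Hence \emph{Condition~1} holds for $(\caH,\pi,u)$ and $(\tilde\xi,\tilde\eta')$ with $\tilde\eta'=\Omega_{\alpha}\otimes\pi_{\alpha}(w_{0})\eta$.

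\emph{Step 2 (bulk unitary, final gap, assembly).} Next I would apply Lemma~\ref{lem4} with $\varepsilon/4$ in place of $\varepsilon$, with $\Lambda_{*}$, and with $(\tilde\xi,\tilde\eta')$: it yields a positive $h\in(\caA_{\Gamma\setminus\Lambda_{*}}^{G})_{1}$ with $\|e^{i\pi\hat\pi(h)}\tilde\xi-\tilde\eta'\|<\tfrac1{4\sqrt2}\,\delta_{\rk}(\varepsilon/32)$. Because $h\in\caA_{\Gamma}^{G}$ we have $e^{i\pi\hat\pi(h)}\tilde\xi=\Omega_{\alpha}\otimes\xi''$ with $\xi'':=e^{i\pi\pi_{\alpha}(h)}\xi$, so $\|\pi_{\alpha}(w_{0})^{-1}\xi''-\eta\|=\|e^{i\pi\hat\pi(h)}\tilde\xi-\tilde\eta'\|<\delta_{\rk}(\varepsilon/32)$. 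Applying Theorem~\ref{lem6kos} with $\varepsilon/32$ to the irreducible $(\caK_{\alpha},\pi_{\alpha})$ and the unit vectors $\pi_{\alpha}(w_{0})^{-1}\xi''$, $\eta$ then gives $a=a^{*}\in\caA_{\Gamma}^{G}$ with $e^{i\pi_{\alpha}(a)}\pi_{\alpha}(w_{0})^{-1}\xi''=\eta$ and $\|a\|<\varepsilon/32$. I then let $v$ be the concatenation, reparametrized onto $[0,1]$, of $s\mapsto e^{is\pi h}$, $s\mapsto e^{-ish_{0}}e^{i\pi h}$, $s\mapsto e^{isa}e^{-ih_{0}}e^{i\pi h}$, all in $\caU(\caA_{\Gamma}^{G})$: then $v(0)=\unit$ and $(\unit_{\caH_{\alpha}}\otimes\pi(v(1)))\tilde\xi=\Omega_{\alpha}\otimes e^{i\pi_{\alpha}(a)}\pi_{\alpha}(w_{0})^{-1}\xi''=\Omega_{\alpha}\otimes\eta=\tilde\eta$, which is~(\ref{tetx}). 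On the first two pieces $\Ad(v(t))$ fixes $\caA_{\Lambda_{*}}$ pointwise ($h,h_{0}\in\caA_{\Gamma\setminus\Lambda_{*}}^{G}$), so by (\ref{ab}) it moves each $a'\in\caF$ by less than $2\cdot\varepsilon/16$ and $4\cdot\varepsilon/16$ respectively; on the third piece the extra factor $e^{isa}$ adds at most $2\|a\|<\varepsilon/16$ more. Hence $\sup_{t\in[0,1]}\|\Ad(v(t))(a')-a'\|<\varepsilon$ for all $a'\in\caF$, which is~(\ref{adv}).

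\emph{Where the real work is.} The only non‑routine step is Step~1: producing the orthogonality of part~1 of Condition~1 without disturbing hypothesis~(\ref{dare}) and without losing the $\caF$-control. Both are achieved by confining the correction to $\caA_{\Gamma\setminus\Lambda_{*}}^{G}$, which commutes with $\caG_{\Lambda_{*}}$ and with $\caA_{\Lambda_{*}}$; this works only because, by Lemma~\ref{lem11iii} and Proposition~\ref{zbdi}, that smaller fixed point algebra still sees $\caK_{\alpha}$ with infinite multiplicities, leaving room to rotate $\eta$ off the fixed finite dimensional obstruction $\caC$. It is essential that $\eta$ and not $\xi$ is the vector perturbed, so that $\caC$ is fixed once and for all. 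Everything else is the $G$-equivariant repackaging of the standard machinery already carried out in Lemmas~\ref{lem4} and~\ref{lem11iii} and in the appendix result behind $\delta_{\rk}$.
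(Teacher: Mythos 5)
Your proof is correct in its conclusions and in the supporting arithmetic, but it follows a genuinely different route from the paper in the key step. The paper obtains the orthogonality needed for \emph{Condition~1} by using Glimm's Lemma (Theorem~\ref{glimm}) to produce a third auxiliary vector $\zeta$, orthogonal to the finite set $\{\hat\pi(xy^{*})\tilde\xi,\hat\pi(xy^{*})\tilde\eta\}$, which reproduces the $\hat\varphi_{\tilde\xi}$-expectations to within $\delta_{\sjt}$; Lemma~\ref{lem4} is then applied \emph{twice}, to the pairs $(\tilde\xi,\zeta)$ and $(\tilde\eta,\zeta)$, producing $h_{1},h_{2}\in\caA^{G}_{\Gamma\setminus\Lambda_{*}}$, and the two legs are stitched by a short $e^{ik}$ from Theorem~\ref{lem6kos}, with the factor $\tfrac12$ in $\delta_{\sjt}$ absorbing the triangle inequality that compares $\tilde\eta$ with $\zeta$. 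You instead perturb $\tilde\eta$ directly to $\tilde\eta'=\Omega_{\alpha}\otimes\pi_{\alpha}(w_{0})\eta$, with $w_{0}\in\caU(\caA_{\Gamma\setminus\Lambda_{*}}^{G})$, so that $(\tilde\xi,\tilde\eta')$ already satisfy both parts of \emph{Condition~1}, and apply Lemma~\ref{lem4} only once; to construct $w_{0}$ you refine the decomposition of $\caK_{\alpha}$ as an $\caA_{\Gamma\setminus\Lambda_{*}}^{G}$-module into $\bigoplus_{\gamma}\caN_{\gamma}\otimes\tilde\caK_{\gamma}$ (Lemma~\ref{lem11iii} for the $\Gamma\setminus\Lambda_{*}$ factor), use the infinite multiplicity guaranteed by Proposition~\ref{zbdi} to rotate $\eta$ off the fixed finite-dimensional obstruction $\caC$, and invoke Kadison transitivity for a direct sum of pairwise inequivalent irreducibles. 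What this buys: one application of Lemma~\ref{lem4} instead of two, no appeal to Glimm's Lemma, and a shorter path (three pieces versus three in the paper, but no ``return trip'' through $\zeta$). What it costs: the additional structural analysis of $\caK_{\alpha}$, and the (non-quantitative) Kadison transitivity argument on the fixed-point algebra.

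Two small imprecisions worth flagging. First, your invocation of Kadison transitivity to get $w_{0}=e^{ih_{0}}$ with $h_{0}=h_{0}^{*}\in\caA_{\Gamma\setminus\Lambda_{*}}^{G}$ satisfying $\tilde\pi_{\gamma}(w_{0})|_{K_{\gamma}}=V_{\gamma}|_{K_{\gamma}}$ \emph{simultaneously} for all $\gamma$ is true, but needs the standard two-step justification: the $\tilde\pi_{\gamma}$ are pairwise inequivalent by (\ref{ttg}), so $\bigoplus_{\gamma}\tilde\pi_{\gamma}$ still has bicommutant $\bigoplus_{\gamma}\caB(\tilde\caK_{\gamma})$; and one should pass through a self-adjoint $H$ supported on the finite-dimensional $L_{\gamma}:=K_{\gamma}+V_{\gamma}K_{\gamma}$ (so that powers of the image of $h_{0}$ stay in $L_{\gamma}$) before exponentiating. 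Second, Theorem~\ref{lem6kos} (via Lemma~\ref{lem9kos}) yields $\|a\|\le\delta_{\nii}(\varepsilon/32)$, not $\|a\|<\varepsilon/32$; your $\varepsilon/16$ estimate for the third piece should therefore be run through the defining property of $\delta_{\nii}$ (as in the paper's estimate of $\|\Ad v_{2}(t)(a)-a\|$) rather than through $2\|a\|$. Neither issue affects the soundness of the overall strategy.
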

\begin{proof}
We denote by $\caN$, the finite dimensional subspace
spanned by $\{\hat \pi(xy^{*})\tilde\xi, \hat \pi(xy^{*})\tilde\eta\mid x,y\in \caG_{\Lambda(\varepsilon,\caF)}\}
$.
Then there exists a unit vector $\zeta$ in $\caN^{\perp}$, the orthogonal complement of $\caN$,
such that
\begin{align}\label{otcy}
\lv
\braket{\zeta}{\hat \pi(xy^{*}) \zeta}
-\braket{\tilde \xi}{\hat \pi(xy^{*}) \tilde \xi}
\rv<\delta_{\sjt}(\varepsilon,\caF)\le\delta_{\nh}\lmk \frac \varepsilon 4, 
\Lambda(\varepsilon,\caF)\rmk,\quad x,y\in \caG_{\Lambda(\varepsilon,\caF)}.
\end{align}
To see this, note that the intersection of the set of all compact operators on
$\caH_{\alpha}\otimes \caH$ and $\hat\pi\lmk \bcg\rmk$ is $0$ because
$\bcg$ is simple.
Applying Glimm's Lemma 
(Theorem \ref{glimm})
to $\delta_{\sjt}(\varepsilon,\caF)>0$,  a pure state $\braket{\tilde\xi}{\cdot \tilde\xi}$ on 
$\hat\pi\lmk \bcg\rmk$, 
a finite dimensional subspace $\caN$ of $\caH_{\alpha}\otimes \caH$
and 
a finite subset
$\hat\pi\lmk \caG_{\Lambda(\varepsilon,\caF)}\caG_{\Lambda(\varepsilon,\caF)}^{*}\rmk$,
we obtain $\zeta$ above.

Combining (\ref{otcy}) with (\ref{dare})we also get
\begin{align}
&\lv
\braket{\zeta}{\hat \pi(xy^{*}) \zeta}
-\braket{\tilde \eta}{\hat \pi(xy^{*}) \tilde \eta}
\rv\le
\lv
\braket{\zeta}{\hat \pi(xy^{*}) \zeta}
-\braket{\tilde \xi}{\hat \pi(xy^{*}) \tilde \xi}
\rv
+\lv
\braket{\tilde \eta}{\hat \pi(xy^{*}) \tilde\eta}
-\braket{\tilde \xi}{\hat \pi(xy^{*}) \tilde \xi}
\rv
\nonumber\\
&<2\delta_{\sjt}(\varepsilon,\caF)=\delta_{\nh}\lmk \frac \varepsilon 4,
\Lambda(\varepsilon,\caF)\rmk,\quad x,y\in \caG_{\Lambda(\varepsilon,\caF)}.
\end{align}

Hence
$(\caH,\pi,u)$
and unit vectors $\tilde\xi,\zeta$ (resp. $\tilde\eta, \zeta$) satisfy
{\it Condition 1.} (Notation  \ref{cond1}) for a pair $\delta_{\nh}\lmk \frac \varepsilon 4, 
\Lambda(\varepsilon,\caF)\rmk>0$, $\Lambda(\varepsilon,\caF)\Subset \Gamma$.
Therefore, from Lemma \ref{lem4}, 
there exist positive elements $h_{1},h_{2}$ in 
$(\caA_{\Gamma\setminus\Lambda(\varepsilon,\caF)}^{G})_{1}$
such that
\begin{align}\label{tiyu}
\lV
e^{i\pi\hat\pi (h_{1})}\tilde\xi-\zeta
\rV<
\frac{1}{4\sqrt 2}\delta_{\rk}\lmk \frac\varepsilon {32}\rmk,\quad\text{and}\quad
\lV
e^{i\pi\hat\pi (h_{2})}\tilde\eta-\zeta
\rV<
\frac{1}{4\sqrt 2}\delta_{\rk}\lmk \frac\varepsilon {32}\rmk.
\end{align}
Here $\delta_{\rk}$ is given in Theorem \ref{lem6kos}.
By the definition of $\tilde \xi$ (\ref{txedef}) and the decomposition
\begin{align}\label{ptpt}
\pi(a)=\bigoplus_{\gamma\in \pgs} \unit_{\caH_{\gamma}}\otimes \pi_{\gamma}(a),\quad
a\in \caA^{G}_{\Gamma}
\end{align}
((\ref{eq53}) of Lemma \ref{lem11iii}),
with irreducible $*$-representations $(\caK_{\gamma}, \pi_{\gamma})$
of $\caA_{\Gamma}^{G}$,
we have $e^{i\pi\hat\pi (h_{1})}\tilde\xi
=\Omega_{\alpha}\otimes e^{i\pi \pi_{\alpha}(h_{1)}}\xi$.
Similarly, we have
$e^{i\pi\hat\pi (h_{2})}\tilde\eta= \Omega_{\alpha}\otimes e^{i\pi \pi_{\alpha}(h_{2)}}\eta$.
Combining this with (\ref{tiyu}), we see that the unit vectors $e^{i\pi \pi_{\alpha}(h_{1)}}\xi, e^{i\pi \pi_{\alpha}(h_{2)}}\eta$ in $\caK_{\alpha}$
satisfies 
\begin{align}
\lV e^{i\pi \pi_{\alpha}(h_{1})}\xi-e^{i\pi \pi_{\alpha}(h_{2})}\eta\rV<\frac{1}{2\sqrt 2}\delta_{\rk}\lmk \frac\varepsilon {32}\rmk.
\end{align}
Then from Lemma \ref{lem9kos}, there exists a unitary $v_{0}$ on $\caK_{\alpha}$ such that
\begin{align}
v_{0}e^{i\pi \pi_{\alpha}(h_{1})}\xi=e^{i\pi \pi_{\alpha}(h_{2})}\eta,\quad\text{and}\quad
\lV
v_{0}-\unit_{\caK_{\alpha}}
\rV<\frac{1}{2}\delta_{\rk}\lmk \frac\varepsilon {32}\rmk<\delta_{\rk}\lmk \frac\varepsilon {32}\rmk.
\end{align}
From this and the fact that $\pi_{\alpha}$ is an irreducible 
representation of  $\caA^{G}_{\Gamma}$, 
applying Theorem \ref{lem6kos}, we obtain a self-adjoint
$k\in \caA^{G}_{\Gamma}$ such that
\begin{align}\label{piyo}
e^{i\pi_{\alpha}(k)}e^{i\pi \pi_{\alpha}(h_{1})}\xi=e^{i\pi \pi_{\alpha}(h_{2})}\eta,\quad\text{and}\quad
\lV k\rV\le \delta_{\nii}\lmk\frac{\varepsilon}{32}\rmk.
\end{align}
Here the function $\delta_{\nii}$ is given in Notation \ref{exp}.

Now we define a continuous path of unitaries
$v:[0,1]\to\caU(\caA_\Gamma^{G})$.
Set
\begin{align}
v_{1}(t):=e^{it\pi h_{1}}\in
 \caU\lmk\caA^{G}_{\Gamma\setminus \Lambda(\varepsilon,\caF)}\rmk,\quad
v_{2}(t):=e^{itk}\in\caU\lmk\caA^{G}_{\Gamma}\rmk,\quad
v_{3}(t):=e^{-it\pi h_{2}}\in
 \caU\lmk\caA^{G}_{\Gamma\setminus \Lambda(\varepsilon,\caF)}\rmk
\end{align}
for each $t\in[0,1]$.

For $i=1,3$, as $v_{i}$ takes value in $\caU\lmk\caA^{G}_{\Gamma\setminus \Lambda(\varepsilon,\caF)}\rmk$, $v_{i}(t)$ commutes with elements in $ \caA_{\Lambda(\varepsilon,\caF)}$. From this and the fact that
the distance between $\caF$ and $ \caA_{\Lambda(\varepsilon,\caF)}$  is
 less than $\frac{\varepsilon}{16}$ (Notation \ref{lef} (\ref{ab})),
 we get $\lV \Ad v_{i}(t) (a)-a\rV<\frac\varepsilon 8$, for all $a\in\caF$,
 $t\in[0,1]$, and $i=1,3$.
For $i=2$, from $\lV k\rV\le\delta_{\nii}\lmk\frac{\varepsilon}{32}\rmk$, recalling the definition of
$\delta_{\nii}$ in Notation \ref{exp}, 
we obtain $\lV \Ad v_{2}(t)(a)-a\rV\le 2\lV  v_{2}(t)-\unit \rV\le\frac \varepsilon{16}$, for all $a\in\caF\subset\lmk \caA_{\Gamma}\rmk_{1}$ and $t\in[0,1]$.

We define $v:[0,1]\to\caU\lmk \caA^{G}_{\Gamma}\rmk$ by
\begin{align}
v(t):=\left\{
\begin{gathered}
v_{1}(3t),\quad \quad t\in\left[0, \frac 13\right],\\
v_{2}\lmk 3\lmk t-\frac 13\rmk\rmk v_{1}(1),\quad\quad t\in\left[\frac 13,\frac 23\right],\\
v_{3}\lmk 3\lmk t-\frac 23\rmk\rmk v_{2}(1) v_{1}(1),\quad\quad t\in\left[\frac 23,1\right].
\end{gathered}
\right.
\end{align}
Clearly $v(0)=\unit_{\caA_{\Gamma}}$ and $v$ is norm-continuous, and it takes values in 
$\caU\lmk \caA^{G}_{\Gamma}\rmk$.
 From the above estimates on
$\lV \Ad v_{i}(t)(a)-a\rV$ for $a\in\caF$ and $i=1,2,3$, we also get (\ref{adv}).
Furthermore, we have
\begin{align}
 \lmk\unit_{\caH_{\alpha}}\otimes \pi(v(1))\rmk\tilde\xi
 =\Omega_{\alpha}\otimes \pi_{\alpha}\lmk v(1)\rmk\xi
=\Omega_{\alpha}\otimes \pi_{\alpha}\lmk e^{-i\pi h_{2}}e^{ik}e^{i\pi h_{1}}\rmk\xi
=\Omega_{\alpha}\otimes\eta=\tilde\eta.
\end{align}
Here, for the first equality, we used the fact that $v(1) $ is in  in $\caA^{G}_{\Gamma}$ and
(\ref{ptpt}).
The third equality is from (\ref{piyo}).
\end{proof}
\begin{rem}
By Lemma \ref{lem4}, we can take $h_{1}, h_{2}$ in 
the fixed point algebra $\caA^{G}_{\Gamma\setminus \Lambda_{(\varepsilon,\caF)}}$.
With the special form of $\tilde\xi,\tilde\eta$, in (\ref{txedef}),
the problem is reduced to the Kadison transitivity theorem
for $(\caK_{\alpha},\pi_{\alpha}(\caA_{\Gamma}^{G}))$.
The irreducibility of $\pi_{\alpha}$ is used there.
From this we may obtain $k$ interpolating
$e^{i\pi\hat\pi (h_{1})}\tilde\xi$ and $e^{i\pi\hat\pi (h_{2})}\tilde\eta$,
 from $\caA^{G}_{\Gamma}$.

\end{rem}

\begin{lem}\label{lem8}
For any $\varepsilon>0$ and $\caF\Subset\lmk \caA_{\Gamma}\rmk_{1}$, the following holds:
Let $(\caH_{i},\pi_{i},u_{i})$ with $i=0,1$ be
irreducible  covariant representations of $\Sigma_{\Gamma}^{(\sigma)}$
with irreducible decomposition of $u_{i}$ given by a set of Hilbert spaces 
$\{\caK_{\gamma,i}\mid \gamma \in  \pgs\}$.
Let 
$\xi_{i}\in\caK_{\alpha,i}$ be a unit vector in $\caK_{\alpha,i}$ for $i=0,1$.
Suppose that the representation $\hat\pi_{i}:=\id_{\caH_{\alpha}}\otimes \lmk \pi_{i}\times u_{i}\rmk$, $i=0,1$
of $B(\caH_{\alpha})\otimes C^{*}(\Sigma_{\Gamma}^{(\sigma)})$ 
and unit vectors $\tilde\xi_{i}:=\Omega_{\alpha}\otimes \xi_{i}$ 
in $\caH_{\alpha}\otimes \caH_{i}$, $i=0,1$ satisfy 
\begin{align}\label{toto}
\lv
\braket{\tilde \xi_{0}}{\hat\pi_{0}(x y^{*})\tilde\xi_{0}}-\braket{\tilde \xi_{1}}{\hat\pi_{1}(x y^{*})\tilde\xi_{1}}
\rv<\frac 12\delta_{\sjt}\lmk \varepsilon,\caF\rmk,\quad
\text{for all }\quad  x,y\in \caG_{\Lambda(\varepsilon,\caF)}.
\end{align}
(Recall Notation \ref{lef} for $\delta_{\sjt}$.)
Then for any $\varepsilon'>0$ and $\caF'\Subset B(\caH_{\alpha})\otimes C^{*}(\Sigma_{\Gamma}^{(\sigma)})$,
there exists a norm-continuous path
$v:[0,1]\to \caU(\caA_{\Gamma}^{G})$ with $v(0)=\unit_{\caA_{\Gamma}}$ such that 
\begin{align}\label{zen}
\lv
\braket{\tilde \xi_{0}}{\hat\pi_{0}(a)\tilde\xi_{0}}
-\braket{\tilde \xi_{1}}{\lmk \hat\pi_{1}\circ \Ad\lmk v(1)\rmk\rmk(a)\tilde\xi_{1}}
\rv<\varepsilon',\quad
\text{for all }\quad a\in\caF',
\end{align}
and 
\begin{align}
\lV
\Ad v(t)(y)-y
\rV<\varepsilon,\quad
\text{for all }\quad y\in \caF,\quad
\text{and }\quad t\in[0,1].
 \end{align}
\end{lem}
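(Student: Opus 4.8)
The plan is to derive this one-step lemma from Lemma \ref{lem2} together with Lemma \ref{lem5}. Lemma \ref{lem2} supplies, for any prescribed finite set and error, a single self-adjoint $h\in\caA_{\Gamma}^{G}$ for which $\hat\varphii_{\xi_{0}}$ and $\hat\varphii_{\xi_{1}}\circ\Ad(e^{ih})$ are that close on that set, but the unitary $e^{ih}$ may move $\caF$ arbitrarily far. Lemma \ref{lem5} cures exactly this: whenever two vectors of the product form $\Omega_{\alpha}\otimes(\cdot)$ in a single irreducible covariant representation of $\Sigma_{\Gamma}^{(\sigma)}$ have vector states within $\delta_{\sjt}(\varepsilon,\caF)$ of each other on $\caG_{\Lambda(\varepsilon,\caF)}\caG_{\Lambda(\varepsilon,\caF)}^{*}$, they can be joined by a norm-continuous path of unitaries in $\caA_{\Gamma}^{G}$ that disturbs $\caF$ by less than $\varepsilon$. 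The factor $\tfrac12$ in the hypothesis (\ref{toto}) is there precisely to absorb the residual error coming from Lemma \ref{lem2}.

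\emph{Step 1: locate the target vector.} Choose $\varepsilon''>0$ with $\varepsilon''<\varepsilon'$ and $\varepsilon''<\tfrac12\delta_{\sjt}(\varepsilon,\caF)$, and set $\caF'':=\caF'\cup\{\,xy^{*}\mid x,y\in\caG_{\Lambda(\varepsilon,\caF)}\,\}\Subset\caB(\caH_{\alpha})\otimes C^{*}(\Sigma_{\Gamma}^{(\sigma)})$. Applying Lemma \ref{lem2} to $(\caH_{i},\pi_{i},u_{i},\xi_{i})$, the set $\caF''$ and $\varepsilon''$ yields $h=h^{*}\in\caA_{\Gamma}^{G}$ with $|\hat\varphii_{\xi_{0}}(x)-\hat\varphii_{\xi_{1}}\circ\Ad(e^{ih})(x)|<\varepsilon''$ for all $x\in\caF''$. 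Let $\{(\caK_{\gamma,1},\pi_{\gamma,1})\mid\gamma\in\pgs\}$ be the family of representations of $\caA_{\Gamma}^{G}$ associated to $(\caH_{1},\pi_{1},u_{1})$ (Lemma \ref{lem11iii}, Notation \ref{agrep}) and put $\eta:=\pi_{\alpha,1}(e^{-ih})\xi_{1}\in\caK_{\alpha,1}$, a unit vector. By (\ref{eq53}), $\pi_{1}(e^{-ih})=\bigoplus_{\gamma\in\pgs}\unit_{\caH_{\gamma}}\otimes\pi_{\gamma,1}(e^{-ih})$, and since $e^{-ih}\in\caA_{\Gamma}$ this gives $\hat\pi_{1}(\unit_{\caB(\caH_{\alpha})}\otimes e^{-ih})\tilde\xi_{1}=\Omega_{\alpha}\otimes\eta=:\tilde\eta$. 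Thus $\tilde\eta$ has the product form required by Lemma \ref{lem5}, and $\hat\varphii_{\xi_{1}}\circ\Ad(e^{ih})(a)=\braket{\tilde\eta}{\hat\pi_{1}(a)\tilde\eta}$ for every $a\in\caB(\caH_{\alpha})\otimes C^{*}(\Sigma_{\Gamma}^{(\sigma)})$.

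\emph{Step 2: replace the path and conclude.} For $x,y\in\caG_{\Lambda(\varepsilon,\caF)}$ we have $xy^{*}\in\caF''$, so Step 1 and (\ref{toto}) give $|\braket{\tilde\eta}{\hat\pi_{1}(xy^{*})\tilde\eta}-\braket{\tilde\xi_{1}}{\hat\pi_{1}(xy^{*})\tilde\xi_{1}}|<\varepsilon''+\tfrac12\delta_{\sjt}(\varepsilon,\caF)<\delta_{\sjt}(\varepsilon,\caF)$, which is exactly hypothesis (\ref{dare}) of Lemma \ref{lem5} for $(\caH_{1},\pi_{1},u_{1})$ with unit vectors $\xi_{1},\eta\in\caK_{\alpha,1}$ and data $\varepsilon,\caF$. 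Lemma \ref{lem5} then produces a norm-continuous path $v:[0,1]\to\caU(\caA_{\Gamma}^{G})$ with $v(0)=\unit_{\caA_{\Gamma}}$, $\tilde\eta=(\unit_{\caH_{\alpha}}\otimes\pi_{1}(v(1)))\tilde\xi_{1}$ and $\sup_{t\in[0,1]}\lV\Ad v(t)(a)-a\rV<\varepsilon$ for $a\in\caF$. Set $v'(t):=v(t)^{*}$; this is again a norm-continuous path in $\caU(\caA_{\Gamma}^{G})$ with $v'(0)=\unit$ and $\lV\Ad v'(t)(y)-y\rV=\lV\Ad v(t)(y)-y\rV<\varepsilon$ for $y\in\caF$, $t\in[0,1]$. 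Since $v(1)\in\caA_{\Gamma}$ acts only on the $C^{*}(\Sigma_{\Gamma}^{(\sigma)})$-leg, the identity $\tilde\eta=(\unit_{\caH_{\alpha}}\otimes\pi_{1}(v(1)))\tilde\xi_{1}$ yields $\braket{\tilde\xi_{1}}{(\hat\pi_{1}\circ\Ad(v'(1)))(a)\tilde\xi_{1}}=\braket{\tilde\eta}{\hat\pi_{1}(a)\tilde\eta}=\hat\varphii_{\xi_{1}}\circ\Ad(e^{ih})(a)$ for all $a$. Hence for $a\in\caF'\subseteq\caF''$ we get $|\braket{\tilde\xi_{0}}{\hat\pi_{0}(a)\tilde\xi_{0}}-\braket{\tilde\xi_{1}}{(\hat\pi_{1}\circ\Ad(v'(1)))(a)\tilde\xi_{1}}|=|\hat\varphii_{\xi_{0}}(a)-\hat\varphii_{\xi_{1}}\circ\Ad(e^{ih})(a)|<\varepsilon''<\varepsilon'$, which is (\ref{zen}) with $v:=v'$.

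\emph{The main point.} There is no genuine obstacle here: the real work --- producing paths and unitaries inside the fixed-point algebra $\caA_{\Gamma}^{G}$ rather than in $\caA_{\Gamma}$, via the decomposition (\ref{eq53}) of $\pi|_{\caA_{\Gamma}^{G}}$ and the Kadison/Kaplansky arguments --- is already discharged in Lemmas \ref{lem2}, \ref{lem5} and \ref{lem11iii}. The one conceptual step to get right is that $e^{ih}$ from Lemma \ref{lem2} must \emph{not} be used as a path (the path $t\mapsto e^{ith}$ disturbs $\caF$ uncontrollably); it serves only to single out the correct target vector $\tilde\eta=\Omega_{\alpha}\otimes\eta$ --- whose product form is forced by (\ref{eq53}) --- after which Lemma \ref{lem5} joins $\tilde\xi_{1}$ to this same $\tilde\eta$ along a path that barely touches $\caF$. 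What is left, namely the choice of $\varepsilon''$ (so that the $\tfrac12$ in (\ref{toto}) leaves exactly enough room) and the harmless passage from $v$ to its adjoint $v'=v^{*}$, is routine.
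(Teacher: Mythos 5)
Your proof is correct and follows essentially the same route as the paper's: apply Lemma~\ref{lem2} to locate a single conjugation $e^{ih}$ (with $h\in\caA_\Gamma^G$), pass to the product-form vector $\tilde\eta=\Omega_\alpha\otimes\pi_{\alpha,1}(e^{-ih})\xi_1$ via the decomposition~(\ref{eq53}), check that $\xi_1,\eta$ satisfy hypothesis~(\ref{dare}) using the $\tfrac12$ slack from~(\ref{toto}), and invoke Lemma~\ref{lem5} to replace $e^{ih}$ by a controlled path in $\caU(\caA_\Gamma^G)$. The only cosmetic difference is that you explicitly name the adjoint path $v'=v^*$ and the auxiliary tolerance $\varepsilon''$; the paper absorbs the same adjustment into its statement of~(\ref{tetx}).
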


\begin{proof}
From Lemma \ref{lem2},  there exists a self-adjoint $h\in \caA_{\Gamma}^{G}$
such that
\begin{align}\label{lem2mains}
\lv
\braket{\tilde \xi_{0}}{\hat\pi_{0}(a)\tilde\xi_{0}}-\braket{\tilde \xi_{1}}{\hat\pi_{1}\circ\Ad \lmk e^{ih}\rmk(a)\tilde\xi_{1}}
\rv<\min\left\{\varepsilon', \frac 12 \delta_{\sjt}\lmk \varepsilon,\caF\rmk\right\}\quad \text{for all}\quad a\in \caF'\cup \caG_{\Lambda\lmk \varepsilon,\caF\rmk}\lmk\caG_{\Lambda\lmk \varepsilon,\caF\rmk}\rmk^{*}.
\end{align}
From this and (\ref{toto}), we have
\begin{align}\label{108y}
\lv
\braket{\tilde \xi_{1}}{\hat\pi_{1}\circ\Ad \lmk e^{ih}\rmk(xy^{*})\tilde\xi_{1}}
-\braket{\tilde \xi_{1}}{\hat\pi_{1}(xy^{*})\tilde\xi_{1}}
\rv<
\delta_{\sjt}\lmk \varepsilon,\caF\rmk\quad
\text{for all}\quad x,y\in \caG_{\Lambda\lmk \varepsilon,\caF\rmk}.
\end{align}
Recall from Lemma \ref{lem11iii} that 
\begin{align}\label{ptpt1}
\pi_{1}(a)=\bigoplus_{\gamma\in \pgs} \unit_{\caH_{\gamma}}\otimes \pi_{\gamma,1}(a),\quad
a\in \caA^{G}_{\Gamma}
\end{align}
with irreducible $*$-representations $(\caK_{\gamma,1}, \pi_{\gamma,1})$
of $\caA_{\Gamma}^{G}$.
From this and $h\in\caA_{\Gamma}^{G}$, we see that
$\hat\pi_{1}\lmk e^{-ih}\rmk\tilde\xi_{1}=\Omega_{\alpha}\otimes  \pi_{\alpha, 1}(e^{-ih})\xi_{1}$
By (\ref{108y}),
$(\caH_{1},\pi_{1},u_{1})$,
$\xi_{1}$, $\pi_{\alpha, 1}(e^{-ih})\xi_{1}$
satisfies the required condition in Lemma \ref{lem5}.

Applying Lemma \ref{lem5} for $(\caH_{1},\pi_{1},u_{1})$ and $ \xi_{1}$,
$\pi_{\alpha,1}(e^{-ih})\xi_{1}$,
 we obtain a norm-continuous path of unitaries
$v:[0,1]\to\caU(\caA_\Gamma^{G})$ such that $v(0)=\unit_{\caA_{\Gamma}}$,
\begin{align}\label{tetx}
\hat\pi_{1}\lmk e^{-ith} \rmk \tilde\xi_{1}=\hat\pi_{1} \lmk v(1)^{*}\rmk\tilde\xi_{1},
\end{align}
and
\begin{align}\label{adv1}
\sup_{t\in[0,1]}\lV
\Ad v(t)(a)-a
\rV<\varepsilon,\quad \text{for all}\quad a\in \caF.
\end{align}
From (\ref{lem2mains}) and (\ref{tetx}), we obtain (\ref{zen}).
\end{proof}
\begin{rem}
As in \cite{kos}, we replace $e^{ith}$ with $v(t)$ which satisfy (\ref{adv1}).
We may do so with $v(t)$ in $\caA_{\Gamma}^{G}$ because of Lemma \ref{lem5}.

\end{rem}

After these preparation, the proof of Proposition \ref{lem6}
is the same as proof of Theorem 2.1 of \cite{kos}.
We give it here for the reader's convenience.
\begin{proofof}[Proposition \ref{lem6}]
We fix an increasing sequence $\Lambda_{n}$, $n=0,1,2,\ldots$
of non-empty finite subsets of $\Gamma$ such that $\Lambda_{n}\nearrow \Gamma$.

For each $i=0,1$, we use the notation $\hat\pi_{i}$, $\tilde\xi_{i}$, $\hat\varphii_{{\xi}_{i}}$ given in 
Notation \ref{nagai}, replacing $(\caH,\pi,u)$ and $\xi\in\caK_{\alpha}$ 
with $(\caH_{i},\pi_{i},u_{i})$ and $\xi_{i}\in\caK_{\alpha,i}$.
Let $(\caK_{\alpha,i}, \pi_{\alpha,i})$, $i=0,1$ be the irreducible $*$-representation of $\caA_{\Gamma}^{G}$ obtained in Lemma \ref{lem11iii} (\ref{eq53})
with $(\caH,\pi,u)$, $\Gamma_{0}$ replaced by $(\caH_{i},\pi_{i},u_{i})$, $\Gamma$.

Set $\caF_{0}:=\caS_{\Lambda_{0}}$. (Recall (\ref{sldef}).)
Fix $\varepsilon>0$ or set $\varepsilon=1$.
Set  $\caG_{0}:=
\caG_{\Lambda\lmk{\varepsilon},\caF_{0}\rmk}\caG_{\Lambda\lmk{\varepsilon},\caF_0\rmk}^{*}$.
From Lemma \ref{lem2}, there exists a self-adjoint $h_{0}\in\caA_\Gamma^{G}$.
such that
\begin{align}\label{kaze}
\lv
\hat\varphii_{{\xi}_{0}}\circ \Ad\lmk e^{ih_{0}}\rmk\lmk a\rmk
-\hat\varphii_{{\xi}_{1}}(a)
\rv<\min\left\{\frac12 \delta_{{\sjt}}\lmk
{\varepsilon},\caF_{0}
\rmk, {\varepsilon}\right\},\quad
a\in  \caG_{0}\cup\caF_{0}.
\end{align}
(Recall Notation \ref{lef} for $\delta_{\sjt}$.)
We define $v_{0}:=[0,1]\to \caU(\caA_\Gamma^{G})$ by 
\begin{align}\label{v0def}
v_{0}(t)=e^{ith_{0}},\quad t\in[0,1].
\end{align}
 We consider the following proposition $[P_{n}]$ for
 each $n\in \nan$:
 \begin{quote} {[$P_{n}$]}
 There exist norm-continuous paths $v_{k}:[0,1]\to \caU(\caA_\Gamma^{G})$, $k=0,\ldots, 2n$
 with $v_{k}(0)=\unit_{\caA_{\Gamma}}$
 satisfying the following:
 Set
 \begin{align}\label{fe}
 \caF_{2j}:= \left\{
 x,
 \Ad\lmk
 v_{2j-1}(1)^{*} v_{2j-3}(1)^{*}\cdots v_{3}(1)^{* }v_{1}(1)^{*}
 \rmk(x)\mid x\in \caS_{\Lambda_{2j}}
 \right\}, \quad j=1,\ldots, n,
 \end{align}
 and
 \begin{align}\label{fo}
 \caF_{2j-1}:= \left\{
 x,
 \Ad\lmk
 v_{2j-2}(1)^{*} v_{2j-4}(1)^{*}\cdots v_{4}(1)^{* }v_{2}(1)^{*}v_{0}(1)^{*}
 \rmk(x)\mid x\in \caS_{\Lambda_{2j-1}}
 \right\}, \quad j=1,\ldots, n.
 \end{align}
 (Recall (\ref{sldef}).)
 We also denote the finite subset $\caG_{\Lambda\lmk\frac{\varepsilon}{2^{k}},\caF_{k}\rmk}\caG_{\Lambda\lmk\frac{\varepsilon}{2^{k}},\caF_{k}\rmk}^{*}$
by $\caG_{k}$, for each $k=0,1,\ldots,2n$.
Then
the following three
inequalities hold.
\begin{enumerate}
\item
For all $a\in \caG_{2n}\cup\caF_{2n}$,
\begin{align}\label{kono}
\lv
\hat\varphii_{{\xi}_{0}}\circ \Ad\lmk
v_{0}(1)v_{2}(1)\cdots v_{2n}(1)
\rmk(a)
-\hat\varphii_{{\xi}_{1}}\circ \Ad\lmk
v_{1}(1)v_{3}(1)\cdots v_{2n-1}(1)
\rmk(a)
\rv
<\min\left\{\frac12 \delta_{{\sjt}}\lmk
\frac{\varepsilon}{2^{2n}},\caF_{2n}
\rmk, \frac{\varepsilon}{2^{2n}}\right\}
 \end{align}
\item 
For all $a\in \caG_{2n-1}\cup\caF_{2n-1}$,
\begin{align}\label{hito}
\lv
\hat\varphii_{{\xi}_{0}}\circ \Ad\lmk
v_{0}(1)v_{2}(1)\cdots v_{2n-2}(1)
\rmk(a)
-\hat\varphii_{{\xi}_{1}}\circ \Ad\lmk
v_{1}(1)v_{3}(1)\cdots v_{2n-1}(1)
\rmk(a)
\rv
<\min\left\{\frac12 \delta_{{\sjt}}\lmk
\frac{\varepsilon}{2^{2n-1}},\caF_{2n-1}
\rmk, \frac{\varepsilon}{2^{2n-1}}\right\}
\end{align}
\item For all $t\in[0,1]$, $k=1,2,\ldots,2n$ with $k\le 2n$ and $x\in \caF_{k-1}$,
we have
\begin{align}\label{vapd}
\lV
\Ad v_{k}(t)(x)-x
\rV<\frac\varepsilon{2^{k-1}}.
\end{align}
\end{enumerate}
%
%
%
%
 \end{quote}
Let us check that [$P_{1}$] with $v_{0}$ given in (\ref{v0def}) holds. 
Set $\caF_{1}$ as in (\ref{fo}) with $j=1$ and this $v_{0}$.
Set $\caG_{1}:=\caG_{\Lambda\lmk\frac{\varepsilon}{2},\caF_{1}\rmk}\caG_{\Lambda\lmk\frac{\varepsilon}{2},\caF_{1}\rmk}^{*}$.
From (\ref{kaze}), applying 
Lemma \ref{lem8} for vectors $\xi_{1}$ and $\pi_{\alpha,0}(v_{0}(1)^{*})\xi_{0}$,
there exists a norm-continuous path
$v_{1}:[0,1]\to \caU\lmk\caA_{\Gamma}^{G}\rmk$ with $v_{1}(0)=\unit$
such that
\begin{align}\label{p1}
\lv
\hat\varphii_{{\xi}_{0}}\circ\Ad \lmk v_{0}(1)\rmk\lmk a \rmk
-\hat\varphii_{{\xi}_{1}}\circ\Ad \lmk v_{1}(1)\rmk\lmk a \rmk
\rv
<\min\left\{\frac12 \delta_{{\sjt}}\lmk
\frac{\varepsilon}{2},\caF_{1}\rmk,
\frac{\varepsilon}{2}\right\}
,\quad
\text{for all }\quad a\in \caG_{1}\cup\caF_{1}
\end{align}
and 
\begin{align}
\lV
\Ad v_{1}(t)(y)-y
\rV<\varepsilon,\quad
\text{for all }\quad y\in \caF_{0},\quad
\text{and }\quad t\in[0,1].
 \end{align}
 Set $\caF_{2}$ as in (\ref{fe}) with $j=1$ for this $v_{1}$.
 And set $\caG_{2}:=\caG_{\Lambda\lmk\frac{\varepsilon}{2^{2}},\caF_{2}\rmk}\caG_{\Lambda\lmk\frac{\varepsilon}{2^{2}},\caF_{2}\rmk}^{*}$.
From (\ref{p1}), applying Lemma \ref{lem8} again to
vectors $\pi_{\alpha,0}\lmk v_{0}(1)^{*}\rmk\xi_{0}$ and
$\pi_{\alpha,1}\lmk v_{1}(1)^{*}\rmk\xi_{1}$, we obtain
 a norm-continuous path
$v_{2}:[0,1]\to \caU\lmk\caA_{\Gamma}^{G}\rmk$ with $v_{2}(0)=\unit$
such that
\begin{align}
\lv
\hat\varphii_{{\xi}_{0}}\circ\Ad \lmk v_{0}(1)v_{2}(1)\rmk\lmk a \rmk
-\hat\varphii_{{\xi}_{1}}\circ\Ad \lmk v_{1}(1)\rmk\lmk a \rmk
\rv
<\min\left\{\frac12 \delta_{{\sjt}}\lmk
\frac{\varepsilon}{2^{2}},\caF_{2}\rmk,
\frac{\varepsilon}{2^{2}}\right\}
,\quad
\text{for all }\quad a\in \caG_{2}\cup\caF_{2}
\end{align}
and 
\begin{align}
\lV
\Ad v_{2}(t)(y)-y
\rV<\frac \varepsilon 2,\quad
\text{for all }\quad y\in \caF_{1},\quad
\text{and }\quad t\in[0,1].
 \end{align}
Hence we have proven [$P_{1}$] with $v_{0}$ given in (\ref{v0def}).
The proof that [$P_{n}$] implies [$P_{n+1}$] with the same $v_{0},v_{1},\ldots, v_{2n}$
as in  [$P_{n}$]
can be carried out in the same way, by the repeated use of Lemma \ref{lem8}.
Hence we obtain a sequence $\{v_{n}\}_{n=0}^{\infty}$ of norm-continuous paths
 $v_{n}:[0,1]\to \caU(\caA_{\Gamma}^{G})$ with $v_{n}(0)=\unit_{\caA_{\Gamma}}$
 satisfying (\ref{kono}) (\ref{hito}) (\ref{vapd}).

We define norm continuous paths $y,z:[0,\infty)\to \caU(\caA_{\Gamma}^{G})$
by
\begin{align}
&y(t):=v_{1}(t) v_{3}(t)\cdots v_{2j-1}(1)v_{2j+1}(t-[t]),\quad j\le t<j+1,\quad
j=0,1,2,\ldots,\nonumber\\
&z(t):=v_{0}(t) v_{2}(t)\cdots v_{2j-2}(1)v_{2j}(t-[t]),\quad j\le t<j+1\quad
j=0,1,2,\ldots.
\end{align}
Here $[t]$ denotes the largest integer less than or equal to $t$.
Then as in section 2 of \cite{kos}, 
for any $a\in\caA_{\rm loc,\Gamma}$,
the limit
\begin{align}
\gamma_{0}(a):=\lim_{t\to\infty}\Ad\lmk z(t)\rmk(a),\quad
\gamma_{1}(a):=\lim_{t\to\infty}\Ad\lmk y(t)\rmk(a)
\end{align}
exist because of (\ref{vapd}) and the fact that $\caS_{\Lambda_{n}}\subset \caF_{n}$.
These limit define endomorphisms $\gamma_{0},\gamma_{1}$ on $\caA_{\Gamma}$.
Furthermore, because of (\ref{vapd}) and the fact that
$\Ad\lmk v_{n-1}(1)^{*}v_{n-3}(1)^{*}\cdots\rmk\lmk \caS_{\Lambda_{n}}\rmk\subset \caF_{n}$,
by the definition (\ref{fe}) and (\ref{fo}), for any $x\in\caA_{\rm loc,\Gamma}$,
the limit
\begin{align}
&\lim_{j\to\infty} \Ad\lmk v_{2j}(1)^{* } v_{2j-2}(1)^{*}\cdots v_{0}(1)^{*}\rmk(x)
=:a_{x}  \\
&\lim_{j\to\infty} \Ad\lmk v_{2j-1}(1)^{* } v_{2j-3}(1)^{*}\cdots v_{1}(1)^{*}\rmk(x)
=:b_{x}
\end{align}
exist. For these limits, we have
$\gamma_{0}(a_{x})=x$, and $\gamma_{1}(b_{x})=x$,  for all $x\in\caA_{\rm loc,\Gamma}$.
Therefore, $\gamma_{0}$ and $ \gamma_{1}$ are automorphisms.
By {\it 1.}, {\it 2.} of [$P_{n}$], we also have
\begin{align}\label{l1l2}
\varphii_{0}\circ\gamma_{0}=
\left.\hat\varphii_{{\xi}_{0}}\right\vert_{\caA_{\Gamma}}\circ\gamma_{0}
=\left.\hat\varphii_{{\xi}_{1}}\right\vert_{\caA_{\Gamma}}\circ\gamma_{1}
=\varphii_{1}\circ\gamma_{1}.
\end{align}
Let $\Xi_{\Gamma}$ be an automorphism given by $\Xi_{\Gamma}:=\gamma_{0}\circ\gamma_{1}^{-1}$ on $\caA$.
Define a norm-continuous path $w:[0,\infty)\to \caU(\caA_{\Gamma}^{G})$
by
\begin{align}
w(t):=z(t)y(t)^{*},\quad t\in [0,\infty).
\end{align}
We have
\begin{align}
\Xi_{\Gamma}(x)=\gamma_{0}\circ\gamma_{1}^{-1}(x)
=\lim_{t\to\infty} \Ad(w(t))(x),\quad x\in\caA_{\Gamma},
\end{align}
and $w(0)=\unit$.
From (\ref{l1l2}), we have $\varphii_{0}\circ\Xi_{\Gamma}=\varphii_{1}$.
This completes the proof.

\end{proofof}

\section{Proof of the Main Theorem}
Now we are ready to prove Theorem \ref{main}.
Let $\omega_{0}$ and $\omega_{1}$ be elements of $SPG(\caA)$. 
\begin{proofof}["if'' part of Theorem \ref{main}]
Suppose that $c_{\omega_{0},R}=c_{\omega_{1},R}$.
From Lemma \ref{clmcr}, we have $c_{\omega_{0},L}=c_{\omega_{1},L}$.
For each $\zeta=L,R$ and $i=0,1$, let  
$(\caL_{\omega_{i},{\vp}}, \rho_{\omega_{i},{\vp}}, u_{\omega_{i},{\vp}},\sigma_{\omega_{i},{\vp}})$
be a quadruple associated to $(\omega_{i}\vert_{\caA_{{\vp}}},\tau_{\vp})$.
By Remark \ref{rem17},
we may assume that $\sigma_{R}:=\sigma_{\omega_{0},{R}}=\sigma_{\omega_{1},{R}}$
and $\sigma_{L}:=\sigma_{\omega_{0},{L}}=\sigma_{\omega_{1},{L}}$.
For each $\zeta=L,R$ and $i=0,1$,
the triple $(\caL_{\omega_{i},{\vp}},  \rho_{\omega_{i},{\vp}}, u_{\omega_{i},{\vp}})$
is an irreducible  covariant representations of 
the twisted $C^{*}$-dynamical system $\Sigma_{\Gamma_{\vp}}^{(\sigma_{\vp})}$.
By Lemma \ref{pd}, $u_{\omega_{i},{\vp}}$ has an 
 irreducible decomposition given by a set of Hilbert spaces 
$\{\caK_{\gamma,i,\vp}\mid \gamma \in  \caP_{\sigma_{\vp}}\}$.
For each $\vp=L,R$, fix some $\alpha_{\vp}\in  \caP_{\sigma_{\vp}}$.
The spaces $\caK_{\alpha_{\vp},i,\vp}$ $i=0,1$ are non-zero
because of Proposition \ref{zbdi}.
Fix unit vectors $\xi_{i,\vp}\in \caK_{\alpha_{\vp},i,\vp}$  for each $\vp=L,R$ and $i=0,1$.

For each  $\vp=L,R$ and $i=0,1$,
let $\hat\varphi_{\xi_{i,\vp}}$ be a state on $\caB(\caH_{\alpha_{\vp}})\otimes C^{*}(\Sigma_{\vp}^{{(\sigma_{\vp})}})$
given by $(\caL_{\omega_{i},{\vp}}, \rho_{\omega_{i},{\vp}}, u_{\omega_{i},{\vp}},\sigma_{\omega_{i},{\vp}})$
(defined in Notation \ref{nagai} (\ref{nekosan}) with
$\caH,\pi,u,\xi$ replaced by $\caL_{\omega_{i},{\vp}}, \rho_{\omega_{i},{\vp}}, u_{\omega_{i},{\vp}},\xi_{i,\vp}$).
Let
$\varphi_{i,\vp}$ be the restriction of $\hat\varphi_{\xi_{i,\vp}}$ 
onto $\caA_{\Gamma_{\vp}}$.
By the definition, $\varphi_{0,\vp}$, $\varphi_{1,\vp}$ are 
quasi-equivalent to $\omega_{0}\vert_{\caA_{\vp}}$, $\omega_{1}\vert_{\caA_{\vp}}$, 
respectively.

By Proposition \ref{lem6}, there exist $\Xi_{\vp}\in \ainn^{G}(\caA_{\vp})$
such that $\varphi_{1,\vp}=\varphi_{0,\vp }\circ \Xi_{\vp}$,
$\vp=L,R$.
Recall that $\omega_{0}$, $\omega_{1}$ are quasi-equivalent to 
$\omega_{0}\vert_{\caA_{L}}\otimes\omega_{0}\vert_{\caA_{R}}$ and
$\omega_{1}\vert_{\caA_{L}}\otimes\omega_{1}\vert_{\caA_{R}}$ respectively from the
split property. (Remark \ref{splitrem}.)
Hence we obtain
\begin{align}
&\omega_{1}\sim_{\rm q.e.}\omega_{1}\vert_{\caA_{L}}\otimes\omega_{1}\vert_{\caA_{R}}
\sim_{\rm q.e.}
\varphi_{1,L}\otimes\varphi_{1,R}
=
\lmk \varphi_{0,L}\circ \Xi_{L}\rmk\otimes\lmk\varphi_{0,R}\circ \Xi_{R}\rmk
=\lmk
\varphi_{0,L}\otimes\varphi_{0,R}\rmk
\circ \lmk \Xi_{L}\otimes \Xi_{R}\rmk\nonumber\\
&\sim_{\rm q.e.}
\lmk
\omega_{0}\vert_{\caA_{L}}\otimes\omega_{0}\vert_{\caA_{R}}
\rmk
\circ \lmk \Xi_{L}\otimes \Xi_{R}\rmk
\sim_{\rm q.e.}
\omega_{0}
\circ \lmk \Xi_{L}\otimes \Xi_{R}\rmk.
\end{align}
This completes the proof.
\end{proofof}

\begin{proofof}["only if'' part of Theorem \ref{main}]
Suppose 
that $\omega_{0}\sim_{{\rm split},\tau}\omega_{1}$.
Then
there exist automorphisms $\Xi_{L}\in \ainn^{G}(\caA_{L})$ 
and $\Xi_{R}\in \ainn^{G}(\caA_{R})$ such that
$\omega_{1}$ and $\omega_{0}\circ\lmk\Xi_{L}\otimes \Xi_{R}\rmk$
are quasi-equivalent.
From the split property, we have
$\omega_{1}\sim_{\rm q.e.}\omega_{1}\vert_{\caA_{L}}\otimes\omega_{1}\vert_{\caA_{R}}$ and $\omega_{0}\circ\lmk\Xi_{L}\otimes \Xi_{R}\rmk\sim_{\rm q.e.}
\lmk
\omega_{0}\vert_{\caA_{L}}\circ\Xi_{L}
\rmk\otimes
\lmk
\omega_{0}\vert_{\caA_{R}}\circ\Xi_{R}
\rmk$.
Combining these, we see that
$\omega_{1}\vert_{\caA_{R}}$ and $\omega_{0}\vert_{\caA_{R}}\circ\Xi_{R}
$
are quasi-equivalent.

For each $\zeta=L,R$ and $i=0,1$, let  
$(\caL_{\omega_{i},{\vp}}, \rho_{\omega_{i},{\vp}}, u_{\omega_{i},{\vp}},\sigma_{\omega_{i},{\vp}})$
be a quadruple associated to $(\omega_{i}\vert_{\caA_{{\vp}}},\tau_{\vp})$.
From $\omega_{1}\vert_{\caA_{R}}\sim_{\rm q.e.}\omega_{0}\vert_{\caA_{R}}\circ\Xi_{R}$,
$\rho_{\omega_0,{R}}\circ\Xi_{R}$ is an irreducible $*$-representation of
$\caA_{R}$  on $\caL_{\omega_{0},{R}}$, which is quasi-equivalent
to the GNS representation of $\omega_{1}\vert_{\caA_{R}}$.
Furthermore, the projective unitary representation 
$u_{\omega_{0},{R}}$ of $G$ with
$2$-cocycle $\sigma_{\omega_{0},{R}}$
satisfies
\begin{align}
\rho_{\omega_{0},{R}}\circ\Xi_{R}\circ \tau_{R}(g)\lmk a\rmk
=\rho_{\omega_{0},{R}}\circ \tau_{R}(g) \circ\Xi_{R}\lmk a\rmk
=\Ad\lmk u_{\omega_{0},{R}}(g)\rmk\circ \rho_{\omega_{0},{R}}\circ\Xi_{R}\lmk a\rmk
,\quad a\in\caA_{R},\quad g\in G.
\end{align}
From this,
$(\caL_{\omega_{0},{R}}, \rho_{\omega_{0},{R}}\circ\Xi_{R}, 
u_{\omega_{0},{R}},\sigma_{\omega_{0},{R}})$
is a quadruple associated to $(\omega_{1}\vert_{\caA_{{R}}},\tau_{R})$.
Hence we obtain $c_{\omega_{1},R}=c_{\omega_{0},R}$.
  This proves the claim.
\end{proofof}

{\bf Acknowledgment.}\\
{
This work was supported by JSPS KAKENHI Grant Number 16K05171 and 19K03534.
}

\appendix

\section{Basic Notation}\label{notasec}
For a Hilbert space $\caH$, $B(\caH)$ denotes the set of all bounded operators on $\caH$.
If $V:\caH_1\to\caH_2$ is a linear/anti-linear map from a Hilbert space $\caH_1$ to 
another Hilbert space $\caH_2$,
then $\Ad (V):B(\caH_1)\to B(\caH_2)$ denotes the map
$\Ad(V)(x):=V x V^*$, $x\in B(\caH_1)$.

For a set $\caX$, $\caF\Subset \caX$ means that $\caF$ is a finite subset of $\caX$. For a finite
set $S$, $|S|$ indicates the number of elements in $S$.

For a $C^{*}$-algebra $\caB$, we denote by
$\caB_{1}$ the set of all elements
in  $\caB$ with norm less than or equal to $1$ and by $\caB_{+,1}$ the set of all positive elements
in  $\caB_{1}$.
For a state $\omega$, $\varphi$ on a $C^{*}$-algebra $\caB$,
we write $\omega\sim_{\rm q.e.}\varphi$ when they are quasi-equivalent.
We denote by $\Aut \caB$ the group of automorphisms on a $C^{*}$-algebra $\caB$.
For a unital $C^{*}$-algebra $\caB$, the unit of $\caB$ is denoted by $\unit_{\caB}$.
For a Hilbert space we write $\unit_{\caH}=\unit_{\caB(\caH)}$.
For a unital $C^{*}$-algebra $\caB$, by $\caU(\caB)$, we mean
the set of all unitary elements in $\caB$.
For a Hilbert space we write $\caU(\caH)$ for $\caU(\caB(\caH))$.
For a $C^{*}$-algebra $\caB$ and $v\in \caB$, we set 
$\Ad(v)(x):=vxv^{*}$, $x\in \caB$.
For a state $\varphi$ on $\caB$ and a $C^{*}$-subalgebra $\caC$ of $\caB$,
$\varphi\vert_{\caC}$ indicates the restriction of $\varphi$ to $\caC$.

\section{Facts from \cite{kos} and \cite{fkk}}\label{kosfkk}
In this section, we list up facts used/proven in \cite{kos} and \cite{fkk}.
\begin{lem}\label{kos1312}
Let $\varphii_{i}$, $i=0,1$ be pure states on a simple unital $C^{*}$-algebra
$\caA$ with GNS triple $(\caH_{i},\pi_{i},\Omega_{i})$.
Then for all $\caF\Subset \caA$ and $\varepsilon>0$
there exists 
an $f\in \caA_{+,1}$ and a unit vector $\zeta\in\caH_{1}$ such that
\begin{align}\pi_{1}(f)\zeta=\zeta,\quad
\lV
f\lmk a-\varphii_{0}(a)\unit_{\caA} \rmk f
\rV<\varepsilon,\quad \text{for all}\quad a\in \caF.
\end{align}
\end{lem}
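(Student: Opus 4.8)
\emph{The plan.} The approach is to combine the Akemann--Anderson--Pedersen excision theorem for pure states, applied to $\varphii_{0}$, with the faithfulness of the GNS representation $\pi_{1}$, which is available because $\caA$ is simple. The required norm bound $\lV f\lmk a-\varphii_{0}(a)\unit_{\caA}\rmk f\rV<\varepsilon$ refers only to $\varphii_{0}$ and the abstract algebra, while the condition $\pi_{1}(f)\zeta=\zeta$ refers only to $\pi_{1}$; so the two demands can be met in succession, producing $f$ in two steps.

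\emph{Step 1: excision of $\varphii_{0}$.} Since $\varphii_{0}$ is pure, by the excision theorem there is a positive contraction $e\in\caA_{+,1}$ with $\lV e\rV=1$ and $\lV e\lmk a-\varphii_{0}(a)\unit_{\caA}\rmk e\rV=\lV eae-\varphii_{0}(a)e^{2}\rV<\varepsilon/2$ for every $a\in\caF$. Because $\caA$ is simple, the nonzero representation $\pi_{1}$ is faithful, hence isometric, so $\lV\pi_{1}(e)\rV=1$. As $\pi_{1}(e)$ is a positive contraction, $1\in\sigma\lmk\pi_{1}(e)\rmk$, so for every $\eta>0$ the spectral projection $P_{\eta}:=\chi_{[1-\eta,1]}\lmk\pi_{1}(e)\rmk$ is nonzero.

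\emph{Step 2: a functional-calculus perturbation creating a genuine eigenvector.} Fix $\eta>0$, to be chosen, and set $h(\lambda):=\min\{1,\lambda/(1-\eta)\}$ for $\lambda\in[0,1]$: a continuous function with $0\le h\le 1$, $h\equiv 1$ on $[1-\eta,1]$, and $\sup_{[0,1]}\lv h(\lambda)-\lambda\rv\le\eta$. Put $f:=h(e)\in\caA_{+,1}$. Since $h\equiv 1$ on $[1-\eta,1]$ we get $\pi_{1}(f)P_{\eta}=h\lmk\pi_{1}(e)\rmk P_{\eta}=P_{\eta}$, so any unit vector $\zeta\in P_{\eta}\caH_{1}$ (which exists as $P_{\eta}\neq 0$) satisfies $\pi_{1}(f)\zeta=\zeta$. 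Moreover $\lV f-e\rV=\lV(h-\mathrm{id})(e)\rV\le\sup_{[0,1]}\lv h(\lambda)-\lambda\rv\le\eta$, so expanding $faf-\varphii_{0}(a)f^{2}$ around $eae-\varphii_{0}(a)e^{2}$ gives, for $a\in\caF$,
\begin{align*}
\lV f\lmk a-\varphii_{0}(a)\unit_{\caA}\rmk f\rV\le\lV e\lmk a-\varphii_{0}(a)\unit_{\caA}\rmk e\rV+C\eta\max_{b\in\caF}\lV b\rV<\frac{\varepsilon}{2}+C\eta\max_{b\in\caF}\lV b\rV
\end{align*}
for an absolute constant $C$. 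Choosing $\eta$ small enough that $C\eta\max_{b\in\caF}\lV b\rV<\varepsilon/2$ (and $\eta<1/2$) finishes the proof.

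\emph{Main obstacle.} The only genuinely nontrivial input is the excision theorem; the rest is spectral theory plus the observation that simplicity makes $\pi_{1}$ isometric. The delicate point is the tension between producing a \emph{true} eigenvector of $\pi_{1}(f)$ for the eigenvalue $1$ and keeping $f$ an excisor of $\varphii_{0}$: chopping $e$ to $(e-(1-\eta))_{+}/\eta$ would yield the eigenvector but wreck the excision estimate, whereas $h(\lambda)=\min\{1,\lambda/(1-\eta)\}$ stays within $\eta$ of the identity in sup-norm, costing only $O(\eta)$ in the estimate while still forcing $\pi_{1}(f)$ to act as $\unit$ on the nonzero subspace $\chi_{[1-\eta,1]}\lmk\pi_{1}(e)\rmk\caH_{1}$.
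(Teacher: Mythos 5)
Your proof is correct, and it follows the same essential route as the cited reference (the paper simply defers to Lemma~2.3 of [FKK], whose argument is also excision-based). The two nontrivial ingredients you invoke --- Akemann--Anderson--Pedersen excision for the pure state $\varphii_{0}$, and faithfulness (hence isometry) of $\pi_{1}$ from simplicity of $\caA$, giving $1\in\sigma(\pi_{1}(e))$ --- are exactly the ones that drive the original proof. The one step worth scrutinizing is how you upgrade ``$1$ is in the spectrum of $\pi_{1}(e)$'' to ``$1$ is a genuine eigenvalue of $\pi_{1}(f)$'' without losing the excision estimate: your choice $h(\lambda)=\min\{1,\lambda/(1-\eta)\}$ is $\equiv 1$ on $[1-\eta,1]$ while satisfying $\sup_{[0,1]}\lv h(\lambda)-\lambda\rv\le\eta$, so $\lV f-e\rV\le\eta$ and the perturbation estimate $\lV faf-\varphii_0(a)f^2\rV\le\lV eae-\varphii_0(a)e^2\rV+4\eta\lV a\rV$ goes through with $C=4$. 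An alternative (and perhaps what [FKK] does literally) is to use the nested form of the excision net, i.e.\ two excisors $e_1,e_2$ with $e_1e_2=e_2$ and $\lV e_2\rV=1$, then normalize $\pi_1(e_2)\zeta_0$ to get a vector fixed by $\pi_1(e_1)$; your functional-calculus perturbation is a self-contained substitute that avoids needing that stronger form of the excision theorem. Either way the argument is sound and the degenerate case $\max_{b\in\caF}\lV b\rV=0$ is trivially handled.
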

\begin{proof}
See proof of Lemma 2.3 of \cite{fkk}.
\end{proof}
By  a basic consideration of $2$-dimensional Hilbert space, we obtain the following.
\begin{lem}\label{lem9kos}
For any $\varepsilon>0$, a Hilbert space $\caH$, and
unit vectors $\xi_{1},\xi_{2}\in\caH$ with $\lV \xi_{1}-\xi_{2}\rV<\frac1{\sqrt 2}\varepsilon$,
there exists a unitary $V$ on $\caH$ such that
$\xi_{2}=V\xi_{1}$ and $\lV V-\unit_{\caH}\rV<\varepsilon$.
\end{lem}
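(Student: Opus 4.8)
The plan is to reduce the problem to the (at most two-dimensional) subspace $\caK:=\spn\{\xi_1,\xi_2\}$ of $\caH$: on $\caK^\perp$ the unitary $V$ will act as the identity, on $\caK$ as a carefully chosen unitary $V_0$ with $V_0\xi_1=\xi_2$, and since $\caK$ is finite dimensional one may bound the operator norm $\lV V-\unit_\caH\rV=\lV V_0-\unit_\caK\rV$ by the Hilbert--Schmidt norm of $V_0-\unit_\caK$, which is directly computable in terms of $\lV\xi_1-\xi_2\rV$.

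First I would dispose of the degenerate case $\dim\caK=1$. Then $\xi_2=\lambda\xi_1$ for some $\lambda\in\bbT$, and taking $V:=\lambda P_\caK+(\unit_\caH-P_\caK)$, with $P_\caK$ the orthogonal projection onto $\caK$, gives $\lV V-\unit_\caH\rV=\lv\lambda-1\rv=\lV\xi_1-\xi_2\rV<\tfrac1{\sqrt2}\varepsilon<\varepsilon$. (One could equally note that if $\varepsilon>2$ any unitary with $V\xi_1=\xi_2$ works, since $\lV V-\unit_\caH\rV\le 2$; so only $\varepsilon\le 2$, which forces $\xi_1,\xi_2$ to be non-orthogonal, is genuinely of interest.)

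For the main case $\dim\caK=2$, choose a unit vector $\zeta\in\caK$ with $\zeta\perp\xi_1$, and write $\xi_2=a\xi_1+b\zeta$ with $\lv a\rv^2+\lv b\rv^2=1$ and $b\neq 0$. Define $V_0$ on $\caK$ by $V_0\xi_1:=a\xi_1+b\zeta\,(=\xi_2)$ and $V_0\zeta:=-\bar b\,\xi_1+\bar a\,\zeta$; in the ordered orthonormal basis $(\xi_1,\zeta)$ this is the matrix $\bigl(\begin{smallmatrix}a&-\bar b\\ b&\bar a\end{smallmatrix}\bigr)$, which one checks is unitary with determinant $1$. Then $V_0-\unit_\caK=\bigl(\begin{smallmatrix}a-1&-\bar b\\ b&\bar a-1\end{smallmatrix}\bigr)$, so $\lV V_0-\unit_\caK\rV\le\lV V_0-\unit_\caK\rV_{\mathrm{HS}}=\bigl(2\lv a-1\rv^2+2\lv b\rv^2\bigr)^{1/2}$. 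Since $\lV\xi_1-\xi_2\rV^2=\lV(1-a)\xi_1-b\zeta\rV^2=\lv 1-a\rv^2+\lv b\rv^2$, this equals $\sqrt2\,\lV\xi_1-\xi_2\rV<\sqrt2\cdot\tfrac1{\sqrt2}\varepsilon=\varepsilon$. Setting $V:=V_0\oplus\unit_{\caK^\perp}$ then completes the proof.

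There is no real obstacle here; it is elementary linear algebra in a two-dimensional space. The only points requiring attention are the separate treatment of the parallel case (so that $\zeta$ exists) and the bookkeeping of constants --- verifying that the crude estimate $\lV\cdot\rV\le\lV\cdot\rV_{\mathrm{HS}}$, combined with the factor $\tfrac1{\sqrt2}$ in the hypothesis, still yields the strict bound $\lV V-\unit_\caH\rV<\varepsilon$.
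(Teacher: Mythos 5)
Your proof is correct and fills in exactly the ``basic consideration of $2$-dimensional Hilbert space'' that the paper alludes to without writing out: restrict to $\spn\{\xi_1,\xi_2\}$, construct the explicit $SU(2)$ rotation sending $\xi_1$ to $\xi_2$, and bound its distance to the identity (the Hilbert--Schmidt estimate combined with the $1/\sqrt2$ in the hypothesis yields the required strict bound). The verification of all cases, including the parallel one, is sound.
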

\begin{notation}\label{exp}
For any $\varepsilon>0$, there exists a $\delta_{\nii}(\varepsilon)>0$ satisfying the following:
For any $t\in \bbR$ with $|t|\le\delta_{\nii}(\varepsilon)$,
we have $\lv e^{it}-1\rv<\varepsilon$.
We will fix such $\delta_{\nii}(\varepsilon)>0$ for each $\varepsilon>0$.
\end{notation}
\begin{thm}\label{lem6kos}
For any $\varepsilon>0$, there exists a $\delta_{\rk}(\varepsilon)>0$ satisfying the following:
For any Hilbert space $\caH$, unital $C^{*}$-algebra $\caA$ acting irreducibly on $\caH$ , and
$\xi,\eta\in \caH$,
 if there is a unitary operator $v$ on $\caH$ satisfying
 $\lV v-\unit \rV<\delta_{\rk}(\varepsilon)$
 and $\eta=v\xi$, 
 there exists a self-adjoint $h\in\caA$ such that
 $e^{ih}\xi=\eta$ and $\lV h\rV\le \delta_{\nii}(\varepsilon)$. 
\end{thm}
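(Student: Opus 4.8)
The plan is to prove this as a quantitative refinement of Kadison's transitivity theorem, by a successive--approximation scheme in which the error is squared at every stage, so that the total correction is controlled \emph{linearly} by $\delta_{\rk}(\varepsilon)$. First I would reduce to unit vectors: if $\xi=0$ then $\eta=v\xi=0$ and $h=0$ works, while if $\xi\ne 0$ then $\lV\xi\rV=\lV v\xi\rV=\lV\eta\rV=:r>0$, and the same $h$ solves the problem for $(\xi,\eta)$ as for $(r^{-1}\xi,r^{-1}\eta)$, so I may assume $\lV\xi\rV=\lV\eta\rV=1$. Then $d_0:=\lV\xi-\eta\rV=\lV(\unit-v)\xi\rV\le\lV v-\unit\rV<\delta_{\rk}(\varepsilon)$, and this distance bound is the only way the hypothesis enters.

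Next I would run the iteration, constructing self-adjoint $a_1,a_2,\dots\in\caA$ and unit vectors $\xi_0:=\xi,\xi_1,\xi_2,\dots$ with $\xi_{n}=e^{ia_{n}}\xi_{n-1}$ such that, with $d_n:=\lV\xi_n-\eta\rV$, one has $\lV a_{n+1}\rV\le C_1 d_n$ and $d_{n+1}\le K d_n^2$ for universal constants $C_1,K$. For the step $n\to n+1$ (the case $n=0$ included): since $\xi_n$ and $\eta$ are unit vectors at the small distance $d_n$, Lemma \ref{lem9kos} supplies a unitary $v_n\in\caB(\caH)$ with $v_n\xi_n=\eta$ and $\lV v_n-\unit\rV<2d_n<2$; put $T_n:=-i\log v_n$ (principal branch), a self-adjoint element of $\caB(\caH)$ with $e^{iT_n}=v_n$ and $\lV T_n\rV\le 2\arcsin\lmk\lV v_n-\unit\rV/2\rmk\le\pi d_n$. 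By Kadison's transitivity theorem for the irreducibly acting $\caA$ there is a self-adjoint $a_{n+1}\in\caA$ with $a_{n+1}\xi_n=T_n\xi_n$ and $\lV a_{n+1}\rV\le\lV T_n\rV+d_n\le(\pi+1)d_n$. Since $e^{ia_{n+1}}\xi_n$ and $e^{iT_n}\xi_n=\eta$ have identical zeroth-- and first--order terms in their exponential series, bounding the remaining terms crudely yields
\begin{align}
d_{n+1}=\lV e^{ia_{n+1}}\xi_n-\eta\rV\le\sum_{m\ge 2}\frac{\lV a_{n+1}\rV^{m}+\lV T_n\rV^{m}}{m!}\le K d_n^{2}.
\end{align}

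Finally I would assemble $h$. Choosing $\delta_{\rk}(\varepsilon)$ small enough that $Kd_0<\tfrac12$ forces $d_n\le 2^{-n}d_0$, hence $\sum_{n\ge 1}\lV a_n\rV\le C_1\sum_{n\ge 0}d_n\le 2C_1 d_0\le C\,\delta_{\rk}(\varepsilon)$ with $C:=2C_1$. Using $\lV AB-\unit\rV\le\lV A-\unit\rV+\lV B-\unit\rV$ for unitaries, the partial products $w_N:=e^{ia_N}\cdots e^{ia_1}$ form a norm--Cauchy sequence of unitaries of $\caA$ (a unital $C^{*}$--algebra, hence norm closed and containing $\unit$); so $w:=\lim_N w_N\in\caA$ is a unitary with $\lV w-\unit\rV\le C\,\delta_{\rk}(\varepsilon)$, and $w\xi=\lim_N\xi_N=\eta$ since $d_N\to 0$. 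Shrinking $\delta_{\rk}(\varepsilon)$ so that also $C\,\delta_{\rk}(\varepsilon)<1$, I would set $h:=-i\log w$ by continuous functional calculus (the spectrum of $w$ lies in $\bbT\setminus\{-1\}$, where $z\mapsto -i\log z$ is continuous and $\mathbb R$--valued); then $h\in\caA$, $h=h^{*}$, $e^{ih}=w$ so $e^{ih}\xi=\eta$, and $\lV h\rV\le 2\arcsin\lmk\lV w-\unit\rV/2\rmk\le\tfrac{\pi}{2}C\,\delta_{\rk}(\varepsilon)$. It then suffices to put $\delta_{\rk}(\varepsilon):=\min\{\varepsilon_0,\tfrac{2}{\pi C}\delta_{\nii}(\varepsilon)\}$, with $\varepsilon_0>0$ a small universal constant validating all the above estimates, to get $\lV h\rV\le\delta_{\nii}(\varepsilon)$, as required.

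The hard part will be the quantitative bookkeeping of the iteration---obtaining the clean quadratic recursion $d_{n+1}\le K d_n^{2}$ with a \emph{universal} $K$, and hence $\sum_n\lV a_n\rV=O(\delta_{\rk}(\varepsilon))$ with a universal constant, while keeping all the auxiliary smallness conditions uniform in $(\caH,\caA,\xi,\eta)$. The only genuinely nontrivial external ingredient is the norm--controlled form of Kadison's transitivity theorem (given $S=S^{*}\in\caB(\caH)$, a unit vector $x$, and $\epsilon>0$, there is a self-adjoint $a\in\caA$ with $ax=Sx$ and $\lV a\rV\le\lV S\rV+\epsilon$), which is classical.
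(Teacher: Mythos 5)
Your proposal is correct and supplies the details the paper only gestures at (the paper's entire ``proof'' is a two-sentence pointer to re-running the Kadison transitivity argument with explicit constants). You instead build a clean Newton-style iteration \emph{on top of} the norm-controlled, self-adjoint Kadison transitivity theorem: at each step you use Lemma \ref{lem9kos} to produce a unitary $v_n$ on $\caH$ moving $\xi_n$ to $\eta$ with $\lV v_n-\unit\rV<2d_n$, take $T_n:=-i\log v_n$ (so $\lV T_n\rV\le 2\arcsin(\lV v_n-\unit\rV/2)\le\pi d_n$), produce a self-adjoint $a_{n+1}\in\caA$ agreeing with $T_n$ on $\xi_n$ with $\lV a_{n+1}\rV\le\lV T_n\rV+d_n$, and exponentiate. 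Because $a_{n+1}\xi_n=T_n\xi_n$, the zeroth- and first-order terms of $e^{ia_{n+1}}\xi_n$ and $e^{iT_n}\xi_n=\eta$ cancel, which is exactly what produces the quadratic recursion $d_{n+1}\le Kd_n^2$; once $Kd_0<\tfrac12$, the errors halve, $\sum_n\lV a_n\rV=O(d_0)=O(\delta_{\rk}(\varepsilon))$, and the partial products $w_N=e^{ia_N}\cdots e^{ia_1}$ converge in $\caA$ to a unitary $w$ with $\lV w-\unit\rV=O(\delta_{\rk}(\varepsilon))$, whence $h=-i\log w$ is a legitimate self-adjoint element of $\caA$ with $e^{ih}\xi=\eta$ and $\lV h\rV\le\tfrac\pi2\lV w-\unit\rV$. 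All the estimates check; in particular your one external ingredient, the norm-controlled form of Kadison transitivity ($\lV a\rV\le\lV S\rV+\epsilon$ for self-adjoint $S$), is classical and in fact can be had with no $\epsilon$ at all. Two very minor points worth a remark in a polished write-up: the degenerate case $\xi=\eta$ (in which $h=0$ works immediately and the iteration stalls at step one), and the observation that taking $\delta_{\rk}(\varepsilon)$ smaller than some universal threshold keeps every $d_n<1$, so all the logarithms taken along the way are well-defined.
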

\begin{proof}
This is a quantitative version of the Kadison transitivity theorem.
It can be obtained by precise estimation of approximation in each step of 
the proof of the Kadison transitivity theorem.
\end{proof}
The following Theorem is called Glimm's Lemma 
\begin{thm}\label{glimm}
Let $\caA$ be a unital $C^{*}$-algebra acting on a Hilbert space $\caH$. Suppose that 
the intersection of $\caA$ and the set of compact operators on $\caH$ is $\{0\}$.
Then for any $\varepsilon>0$, a pure state $\varphi$ of $\caA$, a finite dimensional subspace $\caK$ of 
$\caH$ and a finite subset $\caF$ of $\caA$, there exists a unit vector
$\xi$ in the orthogonal complement of $\caK$
such that
\begin{align}
\lv
\varphi(x)-\braket{\xi}{ x\xi}
\rv<\varepsilon,\quad\text{for all}\quad x\in\caF.
\end{align}
\end{thm}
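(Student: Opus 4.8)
The plan is to realise $\varphi$ as a weak${}^{*}$-limit of vector states coming from unit vectors orthogonal to $\caK$; the finite data $(\caF,\varepsilon)$ then simply picks out one such vector. First I would note that such vectors exist at all: $\caA$ is unital, so $\unit_{\caH}\in\caA$, and since $\unit_{\caH}$ is not compact the hypothesis $\caA\cap\bbK(\caH)=\{0\}$ forces $\caH$ to be infinite dimensional, whence $\caK^{\perp}\neq 0$ (here $\bbK(\caH)$ denotes the compact operators). Let $P$ be the orthogonal projection onto $\caK$, and let $V\subseteq\mathcal S(\caA)$ be the set of states $\braket{\xi}{(\cdot)\,\xi}\vert_{\caA}$ with $\xi$ a unit vector satisfying $P\xi=0$. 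I would then prove two things: (1) the weak${}^{*}$-closed convex hull $C:=\overline{\mathrm{co}}^{\,w^{*}}(V)$ equals the full state space $\mathcal S(\caA)$, which is weak${}^{*}$-compact because $\caA$ is unital; and (2) since $\varphi$ is pure, hence an extreme point of $\mathcal S(\caA)=C$, Milman's theorem (the partial converse to Krein--Milman) yields $\varphi\in\overline V^{\,w^{*}}$, and unwinding the weak${}^{*}$ topology at $\varphi$ for the finite set $\caF$ and radius $\varepsilon$ produces the desired $\xi$.

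For (1) I would argue by contradiction using Hahn--Banach. If some state $\psi\notin C$, then, as $C$ is weak${}^{*}$-closed and convex and the continuous dual of $(\caA^{*},\text{weak}^{*})$ is $\caA$, separation — together with the fact that states are self-adjoint functionals, so the separating element may be taken self-adjoint — gives a self-adjoint $a\in\caA$ and $\lambda\in\bbR$ with $\psi(a)>\lambda\ge\rho(a)$ for all $\rho\in C$; applied to the vector states in $V$ this reads $\braket{\xi}{a\xi}\le\lambda$ for every unit vector $\xi$ with $P\xi=0$, i.e., writing $b:=a-\lambda\unit_{\caH}\in\caA$, $(\unit_{\caH}-P)\,b\,(\unit_{\caH}-P)\le 0$. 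The crux is to deduce that the positive part $b_{+}$ is compact. To do this I would write $b=(\unit_{\caH}-P)b(\unit_{\caH}-P)+F$ with $F:=PbP+Pb(\unit_{\caH}-P)+(\unit_{\caH}-P)bP$ self-adjoint and of range contained in the finite-dimensional subspace $\caK+b\caK$, hence finite rank; then $b\le F$, and conjugating by the spectral projection $E_{\delta}:=\chi_{(\delta,\infty)}(b)$ gives $\delta E_{\delta}\le E_{\delta}FE_{\delta}$, an operator of rank at most $\rank F$, which forces $\dim\Ran E_{\delta}\le\rank F<\infty$ for every $\delta>0$, i.e. $b_{+}$ compact. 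Since $\caA$ is unital, $b\in\caA$ gives $b_{+}\in\caA$, so $b_{+}\in\caA\cap\bbK(\caH)=\{0\}$, i.e. $a\le\lambda\unit_{\caH}$ and $\psi(a)\le\lambda$ — a contradiction, so $C=\mathcal S(\caA)$. This compactness deduction is the step I expect to be the main obstacle, and the only place where ``$\caA$ contains no nonzero compact operator'' genuinely enters.

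For (2) I would use that the pure states on a $C^{*}$-algebra are exactly the extreme points of its state space, so $\varphi\in\mathrm{ext}(\mathcal S(\caA))=\mathrm{ext}(C)$; Milman's theorem, applied to the weak${}^{*}$-compact convex set $C=\overline{\mathrm{co}}^{\,w^{*}}(V)$ with $V\subseteq C$, then gives $\mathrm{ext}(C)\subseteq\overline V^{\,w^{*}}$, hence $\varphi\in\overline V^{\,w^{*}}$. Finally, membership of $\varphi$ in the weak${}^{*}$-closure of $V$ means the basic weak${}^{*}$ neighbourhood $\{\rho\in\caA^{*}\mid\lv\rho(x)-\varphi(x)\rv<\varepsilon\text{ for all }x\in\caF\}$ meets $V$, which is precisely the existence of a unit vector $\xi$ in the orthogonal complement of $\caK$ with $\lv\braket{\xi}{x\xi}-\varphi(x)\rv<\varepsilon$ for all $x\in\caF$. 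I would also remark that purity of $\varphi$ is used essentially here: for a general state the same argument only delivers an approximating \emph{convex combination} of such vector states.
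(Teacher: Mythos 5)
Your proof is correct. The paper does not prove Theorem~\ref{glimm}; it records it in the appendix as a known fact (Glimm's Lemma), so there is no in-paper argument to compare against. Your argument is the standard Hahn--Banach/Milman proof: you show the weak$^{*}$-closed convex hull $C$ of the vector states coming from $\caK^{\perp}$ is all of $\caS(\caA)$ by a separation argument, and then Milman's partial converse to Krein--Milman puts the extreme point $\varphi$ in the weak$^{*}$-closure of those vector states. Each step checks out: the separating functional may indeed be taken self-adjoint since states are hermitian; the decomposition $b=(\unit-P)b(\unit-P)+F$ with $F$ of finite rank is exact; and the deduction that $b\le F$ forces $b_{+}$ to be finite rank (via $\delta E_{\delta}\le E_{\delta}FE_{\delta}$, so $\rank E_{\delta}\le\rank F$ for every $\delta>0$) is clean and correct, and is precisely where $\caA\cap\bbK(\caH)=\{0\}$ is used. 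The one small imprecision is cosmetic: it is the non-degeneracy of the inclusion $\caA\subset\caB(\caH)$ (i.e.\ $\unit_{\caA}=\unit_{\caH}$) that lets you write $b=a-\lambda\unit_{\caH}\in\caA$ and conclude $\caH$ is infinite dimensional; also $b_{+}\in\caA$ already follows from $b_{+}=f(b)$ with $f(0)=0$, so unitality is not needed at that particular point, only for $b\in\caA$.
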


Let us recall the following fact from \cite{kos}.
\begin{lem}\label{lem28}
For any $\varepsilon>0$ and $n\in\nan$, there exists a
$\delta_{\nhh}(\varepsilon,n)>0$ satisfying the following.:
Let $\caH$ be an infinite dimensional Hilbert space and $\caA$
a unital $C^{*}$-algebra acting irreducibly on $\caH$.
Let $\{x_{i}\}_{i=1}^{n}\subset \caA$ be a finite sequence satisfying
$\sum_{i=1}^{n}x_{i} x_{i}^{*}=1$.
Let $\xi,\eta\in\caH$ be unit vectors such that
$x_{i}^{*}\xi$ and $x_{j}^{*}\eta$ are orthogonal for any $i,j=1,\ldots, d$ and
\begin{align}
\lv
\braket{\xi}{x_{i}x_{j}^{*}\xi}-\braket{\eta}{x_{i}x_{j}^{*}\eta}
\rv
<\delta_{\nhh}(\varepsilon, n),\quad i,j=1,\ldots, n.
\end{align}
Then there exists 
a positive element $h\in\caA_{+,1}$
such that
\begin{align}
\lV \bar h(\xi+\eta)\rV
<\frac 1{4\sqrt 2}\delta_{\rk}\lmk \frac \varepsilon 8\rmk e^{-\pi},\quad \text{and}\\
\lV \lmk \unit-\bar h\rmk (\xi-\eta)\rV
<\frac 1{4\sqrt 2}\delta_{\rk}\lmk \frac \varepsilon 8\rmk e^{-\pi}
\end{align}
hold for
\begin{align}
\bar h:=\sum_{j=1}^{n} x_{j}h x_{j}^{*}.
\end{align}
Here the function $\delta_{\rk}$ is given in Theorem \ref{lem6kos}.
\end{lem}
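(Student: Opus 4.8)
The plan is to reduce the statement to one about a finite configuration of vectors in $\caH$ and then transport the required positive contraction into $\caA$ by the Kadison transitivity theorem, following \cite{kos}. Write $\xi_j:=x_j^*\xi$ and $\eta_j:=x_j^*\eta$ for $j=1,\dots,n$. From $\sum_j x_jx_j^*=\unit$ one reads off $\sum_j\|\xi_j\|^2=\sum_j\|\eta_j\|^2=1$, $\langle\xi,\eta\rangle=\sum_j\langle\xi_j,\eta_j\rangle=0$, and the ``row contraction'' estimate $\|\sum_j x_jv_j\|^2\le\sum_j\|v_j\|^2$ for all $v_1,\dots,v_n\in\caH$ (Cauchy--Schwarz together with $\sum_j\|x_j^*\zeta\|^2=\|\zeta\|^2$). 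Since $\langle\xi_i,\eta_j\rangle=0$ for all $i,j$ by hypothesis, the finite-dimensional linear spans $\caV_\xi$ of $\{\xi_j\}$ and $\caV_\eta$ of $\{\eta_j\}$ are orthogonal, and the Gram matrices $G^\xi=[\langle\xi_i,\xi_j\rangle]$, $G^\eta=[\langle\eta_i,\eta_j\rangle]$ satisfy $\|G^\xi-G^\eta\|<n\,\delta_{\nhh}(\varepsilon,n)$ in operator norm (the operator norm of an $n\times n$ matrix is at most $n$ times the maximum of the moduli of its entries). The heuristic is that the operator one wants $h$ to imitate is the positive contraction sending $\xi_j\mapsto\frac12(\xi_j-\eta_j)$ and $\eta_j\mapsto\frac12(\eta_j-\xi_j)$: for such an $h$ one has $h(\xi_j+\eta_j)\approx0$ and $(\unit-h)(\xi_j-\eta_j)\approx0$, so the row-contraction estimate gives $\bar h(\xi+\eta)\approx0$ and $(\unit-\bar h)(\xi-\eta)\approx0$.

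To realize such an $h$ as an honest element of $\caA_{+,1}$, I would argue as follows. Let $\Xi,H\colon\bbC^n\to\caH$ be the maps $\Xi e_j=\xi_j$, $He_j=\eta_j$ (so $\Xi^*\Xi=G^\xi$, $H^*H=G^\eta$), with polar decompositions $\Xi=U_\Xi(G^\xi)^{1/2}$, $H=U_H(G^\eta)^{1/2}$, $U_\Xi$ and $U_H$ partial isometries. Put $V:=U_HU_\Xi^*$, a contraction with $V\caV_\xi\subseteq\caV_\eta$. Using $U_\Xi^*U_\Xi(G^\xi)^{1/2}=(G^\xi)^{1/2}$ and $U_H^*U_H(G^\eta)^{1/2}=(G^\eta)^{1/2}$ one gets $V\Xi=U_H(G^\xi)^{1/2}$ and $V^*H=U_\Xi(G^\eta)^{1/2}$, hence $\|V\Xi-H\|$ and $\|V^*H-\Xi\|$ are both at most $\|(G^\xi)^{1/2}-(G^\eta)^{1/2}\|\le\|G^\xi-G^\eta\|^{1/2}<(n\,\delta_{\nhh}(\varepsilon,n))^{1/2}$, where I use the standard operator inequality $\|A^{1/2}-B^{1/2}\|\le\|A-B\|^{1/2}$ for $A,B\ge0$. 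Now let $\caV:=\caV_\xi\oplus\caV_\eta$, let $W$ be the self-adjoint operator on $\caV$ that maps $\caV_\xi$ into $\caV_\eta$ by $V$ and $\caV_\eta$ into $\caV_\xi$ by $V^*$ (so that $\|W\|=\|V\|\le1$), extended by $0$ on $\caV^\perp$, and set $h_0:=\frac12(P_\caV-W)$; this is a positive contraction on $\caH$ with $P_\caV h_0P_\caV=h_0$. By Kadison transitivity in the form producing a positive contraction (applied to the irreducible $\caA$ and the finite-dimensional $\caV$) there is $h\in\caA_{+,1}$ with $h\zeta=h_0\zeta$ for all $\zeta\in\caV$; explicitly $h\xi_j=\frac12(\xi_j-V\xi_j)$ and $h\eta_j=\frac12(\eta_j-V^*\eta_j)$.

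It then remains to chase constants. A direct computation gives $h(\xi_j+\eta_j)=\frac12(\xi_j-V^*\eta_j)+\frac12(\eta_j-V\xi_j)$ and $(\unit-h)(\xi_j-\eta_j)=\frac12(\xi_j-V^*\eta_j)+\frac12(V\xi_j-\eta_j)$, so by the bounds above each of $\|h(\xi_j+\eta_j)\|^2$ and $\|(\unit-h)(\xi_j-\eta_j)\|^2$ is $<n\,\delta_{\nhh}(\varepsilon,n)$, and summing over $j$ yields $\sum_j\|h(\xi_j+\eta_j)\|^2<n^2\delta_{\nhh}(\varepsilon,n)$ and the analogous bound for the other sum. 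Since $\bar h(\xi+\eta)=\sum_j x_j\,h(\xi_j+\eta_j)$ and $(\unit-\bar h)(\xi-\eta)=\sum_j x_j\,(\unit-h)(\xi_j-\eta_j)$, the row-contraction estimate gives $\|\bar h(\xi+\eta)\|^2<n^2\delta_{\nhh}(\varepsilon,n)$ and $\|(\unit-\bar h)(\xi-\eta)\|^2<n^2\delta_{\nhh}(\varepsilon,n)$; thus it suffices to set $\delta_{\nhh}(\varepsilon,n):=\frac1{n^2}\bigl(\frac1{4\sqrt2}\delta_{\rk}(\varepsilon/8)e^{-\pi}\bigr)^2$. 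The step that really requires care — and the reason for going through polar decompositions and square-root continuity rather than naively interpolating the two configurations — is the passage from ``entrywise close Gram matrices'' to ``a contraction matching the configurations up to small error'': a naive choice would have norm governed by the (uncontrolled) condition number of the configuration, whereas $V=U_HU_\Xi^*$ is automatically a contraction by construction. This is precisely the device used in \cite{kos}.
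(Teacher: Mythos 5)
Your proof is correct and follows the approach of \cite{kos} (section 3), which is exactly what the paper cites for this lemma. The key steps — producing a genuine contraction $V$ from the entrywise-close Gram matrices via the polar decompositions of the frame maps and $\lV A^{1/2}-B^{1/2}\rV\le\lV A-B\rV^{1/2}$, transporting the block-off-diagonal positive contraction $h_0=\frac12(P_{\caV}-W)$ into $\caA_{+,1}$ by Kadison transitivity, and using the row-contraction estimate to pass from $h$ to $\bar h$ — are all sound, and the choice $\delta_{\nhh}(\varepsilon,n)=n^{-2}\bigl(\frac{1}{4\sqrt 2}\delta_{\rk}(\varepsilon/8)e^{-\pi}\bigr)^2$ makes the constants check out.
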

\begin{proof}
See section 3 of \cite{kos} for the proof.
\end{proof}


\begin{thebibliography}{}

\bibitem[BC]{bedos}
E.~B\'{e}dos and
R.~Conti.
\newblock{
On discrete twisted $C^{*}$-dynamical systems, Hilbert $C^{*}$-modules and regularity.}
\newblock{
M\"{u}nster J. Math. 
}
{\bf 5} 183--208 (2012).

\bibitem[B]{brat}
O.~Bratteli.
\newblock{ Inductive limits of finite-dimensional $C^{*}$-algebras. }
\newblock{Trans. Amer. Math. Soc. }{\bf 171} 195--234 (1972).

\bibitem[BR1]{BR1}
 O.~Bratteli and D.~W.~Robinson.
\newblock {\em Operator Algebras and Quantum Statistical 
 Mechanics 1.} Springer-Verlag. (1986).
 \bibitem[BR2]{BR2}
 O.~Bratteli and  D.~W.~Robinson.
 \newblock {\em Operator Algebras and Quantum Statistical 
 Mechanics 2.} Springer-Verlag. (1996).
\bibitem[E]{elliott}
 G.~Elliott.
 \newblock{ A Some simple $C^{*}$-algebras constructed as crossed products with discrete outer automorphism groups.}
  Publ. Res. Inst. Math. Sci. {\bf 16} 299--311 (1980).

\bibitem[FKK]{fkk}
H.~Futamura, N.~Kataoka and A.~Kishimoto.
\newblock{Homogeneity of the pure state space for separable $C^{*}$algebras. }
\newblock{Internat. J. Math.} {\bf 12} 813--845 (2001).

\bibitem[KOS]{kos} A.~Kishimoto, N.~Ozawa and S.~Sakai.
\newblock{Homogeneity of the pure state space of a separable C*-algebra}.
\newblock{Canad. Math. Bull.} {\bf 46} 365--372 (2003).


\bibitem[M]{Matsui2}
T.~Matsui.
\newblock{The split property and the symmetry breaking of the quantum spin chain.}
\newblock{Commun.Math.Phys.}
{\bf 218} 393--416 (2001).


\bibitem[O]{ogata}
Y.~Ogata.
\newblock{A $\bbZ_{2}$-Index of Symmetry Protected Topological Phases with Time Reversal Symmetry for Quantum Spin Chains}
\newblock{Commun. Math. Phys.}
 (2019). https://doi.org/10.1007/s00220-019-03521-5.
\bibitem[P]{powers}
R.~T.~Powers.
\newblock{ Representations of uniformly hyperfinite algebras and their associated von Neumann rings.}
\newblock{ Ann. Math.}
{\bf  86} 138--171 (1967).

\bibitem[S]{simon}
 B.~Simon.
\newblock {\em Representations of Finite and Compact Groups.} AMS. (1995).
 

\bibitem[T]{takesaki}
M.~Takesaki.
\newblock{\em Theory of operator algebras. I.}
\newblock{{Encyclopaedia of Mathematical Sciences}.} Springer-Verlag. (2002).

\end{thebibliography}
\end{document}